\documentclass{article}
\usepackage{arxiv}

\usepackage{amsmath, amssymb}
\usepackage{graphicx}
\usepackage{xcolor}
\usepackage{amsthm}
\usepackage{stmaryrd}
\usepackage{siunitx}
\usepackage{caption}
\usepackage{subcaption}

\newtheorem{theorem}{Theorem}
\newtheorem{lemma}[theorem]{Lemma}

\theoremstyle{definition}
\newtheorem{definition}[theorem]{Definition}
\newtheorem{remark}[theorem]{Remark}

\DeclareMathOperator{\laplace}{\Delta}

\newcommand{\R}{\mathbb{R}}

\newcommand{\N}{\mathbb{N}}
\newcommand{\w}{\kappa}
\newcommand{\x}{\mathbf{x}}

\newcommand{\domain}{\Omega}
\newcommand{\bnd}{{\partial \domain}}
\newcommand{\elbnd}{{\partial T}}
\newcommand{\ltwo}{L^{2}}

\DeclareMathOperator{\diver}{div}
\newcommand{\hdiv}{H (\diver, \mathcal{T})}
\newcommand{\hdivel}{H(\diver, T)}
\newcommand{\conjugate}[1]{\overline{#1}}
\newcommand{\normal}{\mathbf{n}}
\newcommand{\n}{\normal}
\DeclareMathOperator{\grad}{\nabla}

\title{Error Analysis of an HDG Method with Impedance Traces for the Helmholtz Equation}
\date{\today}
\author{Michael Leumüller\\
	Institute of Analysis and Scientific Computing\\
	TU Wien\\
	Vienna, Austria \\
	\And
	Joachim Schöberl\\
	Institute of Analysis and Scientific Computing\\
	TU Wien\\
	Vienna, Austria \\
}

\begin{document}

\maketitle

\begin{abstract}
In this work, a novel analysis of a hybrid discontinuous Galerkin method for the Helmholtz equation is presented. 
It uses wavenumber, mesh size and polynomial degree independent stabilisation parameters leading to impedance traces between elements.
With analysis techniques based on projection operators unique discrete solvability without a resolution condition and optimal convergence rates with respect to the mesh size are proven. 
The considered method is tailored towards enabling static condensation and the usage of iterative solvers.
\end{abstract}


MSC class: 65N12 (Primary) 65N30 (Secondary)
\keywords{Helmholtz equation \and hybrid discontinuous Galerkin method \and finite elements \and stability \and convergence}

\section{Introduction}
The numerical solution of the Helmholtz equation with impedance boundary conditions is challenging.
For large wavenumbers the finite element method (FEM) yields, due to the necessary small mesh size, large systems of linear equations.
The computational costs for direct solvers are too expensive and due to the indefinite structure of the Helmholtz equation,
iterative solvers developed for elliptic problems cannot be applied.
Domain decomposition preconditioners seem promising, but sub-blocks of finite element (FE) system matrices need to be inverted.
For conforming FE spaces these blocks represent local Dirichlet problems which may be singular.

Discontinuous spaces can be applied to circumvent this issue leading to absolutely stable local methods, e.g. \cite{feng2013absolutely}.
In \cite{doi:10.1007/s10915-013-9726-8} a discontinuous Galerkin (DG) method for the Helmholtz equation has been proposed and analysed.
It is remarkable that therein the existence of a discrete solution is proven without the need for a resolution condition of the discrete space.
Quasi-optimality has been established in the asymptotic regime.
Other studies of DG methods have been dedicated to giving explicit pre-asymptotic estimates, e.g. \cite{feng2009discontinuous, doi:10.1090/S0025-5718-2011-02475-0, feng2013absolutely, wu2014pre}.
The analysis is based on carefully chosen test functions contained in DG spaces.
The same technique cannot be applied to conforming FE spaces.

A disadvantage of DG methods compared to standard FEM is the larger number of coupling unknowns.
Hybrid discontinuous Galerkin (HDG) methods have been developed to counteract this issue.
Applying static condensation leads to smaller systems of linear equations for skeleton unknowns.
A requirement is the invertibility of element matrices which is closely related to local absolute stability.
In \cite{maxwellabsolute} an absolutely stable HDG method for Maxwell's equations has been analysed
via a new method based on $\ltwo$-projections proving discrete absolute stability.

DG and HDG methods depend on stabilisations of jumps over facets, which usually are $h$-dependent stemming from the analysis of the elliptic Poison equation.
Contrary to that approach, the experiments in \cite{huberphd, huberproceedings, huberproceedingshelmholtz} suggests that iterative solvers for the Helmholtz equation
require $h$-independent stabilisations representing impedance traces between sub-domains.
In \cite{griesmaier2011error}, an HDG method with such a stabilisation has been analysed.
The used technique is based on results from \cite{cockburn2010projection} and utilises special projections tailored to the therein employed HDG formulation.
The authors were able to prove optimal convergence rates in the asymptotic range.

The main contribution of this work is the analysis of the HDG formulation introduced in \cite{Monk2010HybridizingRE} 
and further studied in \cite{huberphd, huberproceedings, huberproceedingshelmholtz}.
The authors explored various promising iterative solvers, motivating the favourable properties of this HDG formulation.
Numerical experiments therein suggest for solvers the necessity of a second variable per facet representing the flux,
which is similar to the discontinuous Petrov Galerkin method in \cite{demkowicz2012wavenumber, gopalakrishnan2014dispersive}.
This work establishes the analytical foundation for the HDG method with two variables per facet introduced in \cite{Monk2010HybridizingRE}.
It utilises a mesh size, domain and wave number independent stabilisation which is apriori known for all Helmholtz problems.
In Section \ref{sec::hdgmain}, the HDG formulation is introduced and the main results, consisting of discrete absolute stability and optimal convergence rates, are stated.
The consecutive sections are dedicated to proving these statements, starting with the discrete absolute stability in Section \ref{sec::discreteabsolutestability},
followed by the optimal convergence for the pressure in Section \ref{sec::errorestimatu} and of the flux in Section \ref{sec::sigmabestapprox}.
In Section \ref{sec::numres}, simulations for problems with plane waves as analytical solutions are shown and
to illustrate the potential of an iterative solver, a 2D example with heterogeneous materials and a 3D example with scattering objects are considered.

\section{Hybrid Discontinuous Galerkin Method} \label{sec::hdgmain}

This study considers the following mixed Helmholtz problem with an impedance boundary condition on a bounded Lipschitz domain $\domain \subset \R^d$.

\begin{definition}[Mixed Helmholtz Problem]
For given $\w > 0$ and $g \in \ltwo(\bnd)$, let $(\sigma, u)$ be the solution of the boundary value problem (BVP)
\begin{subequations}
\begin{align}
	j \w \sigma - \grad u = 0 && \text{ in } \domain, \\
	- \diver \sigma + j \w u = 0 && \text{ in } \domain, \\
	\sigma \cdot \n + u = g && \text{ on } \bnd.
\end{align}
\end{subequations}
\end{definition}
The outward pointing normal vector is denoted by $\n$, the imaginary unit is $j := \sqrt{-1}$, $g$ represents a given excitation on $\bnd$ and $\w$ is a prescribed positive wavenumber.
The flux and pressure are symbolised by $\sigma$ and $u$.
The corresponding adjoint problem is defined as follows.
\begin{definition}[Adjoint Mixed Helmholtz Problem] \label{def::adjointproblem}
For given $\w > 0$ and $f \in \ltwo(\domain)$, let $(\phi, w)$ be the solution of the BVP
\begin{subequations}
\begin{align}
	j \w \phi + \grad w = 0 && \text{ in } \domain, \label{eq:adjfirst}\\
	\diver \phi + j \w w = f && \text{ in } \domain, \label{eq:adjsecond}\\
	\phi \cdot \n + w = 0 && \text{ on } \bnd. \label{eq:adjboundary}
\end{align}
\end{subequations}
\end{definition}
The adjoint BVP has a homogenous Robin-boundary condition and a volume excitation $f$.
The existence and uniqueness of the continuous BVPs have been proven in \cite[Proposition 8.1.3]{melenkphd}.

\begin{remark}
The mixed adjoint problem is equivalent to
\begin{subequations}
\begin{align}
	- \laplace w - \w^2 w = j \w f && \text{ in } \domain, \\
	- \frac{1}{j \w} \grad w \cdot \n + w = 0 && \text{ on } \bnd
\end{align}
\end{subequations}
with $\w$-dependent right-hand side.
\end{remark}

To be able to achieve $\w$-explicit analysis, regularity estimates of the Helmholtz equation are necessary.
For convex, or smooth and star-shaped domains, the adjoint Helmholtz problem in Definition \ref{def::adjointproblem} satisfies the regularity
\begin{align}
	\lVert \grad w \rVert_{\ltwo(\domain)} + \w \lVert w \rVert_{\ltwo(\domain)} + \w \lVert \phi \rVert_{\ltwo(\domain)} &\leq C \w \lVert f \rVert_{\ltwo(\domain)}, \\
	\lVert \grad^2 w \rVert_{\ltwo(\domain)} + \w \lVert \grad \phi \rVert_{\ltwo(\domain)} &\leq C (1+\w) \w \lVert f \rVert_{\ltwo(\domain)},
\end{align}
with a constant $C > 0$ independent of $\w$, see \cite[Remark 2.6]{doi:10.1007/s10915-013-9726-8}.
Throughout this study, the $H^2$ regularity of both BVPs is assumed.

The domain is split into disjoint finite elements $T$. The set of all finite elements is called $\mathcal{T}$ and there holds $\cup_{T \in \mathcal{T}} T = \domain$.
The facets between elements inside of $\domain$ are denoted by $F_{\rm I}$ and the facets on $\bnd$ by $F_{\rm O}$.
If a distinction between $F_{\rm{I}}$ and $F_{\rm{O}}$ is not necessary, then a facet will be called $F$ and the set of all facets is defined as $\mathcal{F}$.
The following short notations for $\ltwo$ integrals on volume and facet terms are used,
\begin{align}
	(u, v)_T := \int_T u \cdot \conjugate{v} d \x, && (u, v)_F := \int_F u \cdot \conjugate{v} d s,
\end{align}
and the accordingly induced norms are
\begin{align}
	\lVert u \rVert_T := \sqrt{(u, u)_T}, && \lVert u \rVert_F := \sqrt{(u, u)_F}.
\end{align}
It is noteworthy that the second argument is complex conjugated.
The used discontinuous, complex-valued spaces are defined by:
\begin{align}
	\hdivel &:= \{\sigma \in [\ltwo(T)]^d : \diver \sigma \in \ltwo(T)  \}, \\
	\hdiv &:= \prod_{T \in \mathcal{T}}\hdivel, \\
	\ltwo(\mathcal{F}) &:= \prod_{F \in \mathcal{F}} \ltwo(F).
\end{align}
The respective discrete spaces, of the polynomial degree $p \in \N_0$, are
\begin{align}
	\mathcal{RT}^p(T) &:= \{\sigma + \x \ q: \sigma \in [\mathcal{P}^p(T)]^d , q \in \mathcal{P}^p(T)\} \subset \hdivel, \\
	\mathcal{RT}^p (\mathcal{T}) &:= \prod_{T \in \mathcal{T}} \mathcal{RT}^p(T) \subset \hdiv, \\
	\mathcal{P}^p(\mathcal{F}) &:= \prod_{F \in \mathcal{F}} \mathcal{P}^p(F) \subset \ltwo(\mathcal{F}), \\
	\mathcal{P}^p(\mathcal{T}) &:= \prod_{T \in \mathcal{T}} \mathcal{P}^p(T) \subset \ltwo(\domain),
\end{align}
where $\x := (x,y,z)^\top$ and $\mathcal{P}^p$ is the space of polynomials with degrees smaller or equal to $p$.
For the discrete, discontinuous compound space the short notation
\begin{align}
	\mathcal{X}_h := \mathcal{RT}^p (\mathcal{T}) \times \mathcal{P}^p(\mathcal{T}) \times \mathcal{P}^p(\mathcal{F}) \times \mathcal{P}^p(\mathcal{F})
\end{align}
is used. The discrete space $\mathcal{RT}$ is commonly known as the Raviart-Thomas space, see \cite{SchoeZagl:05}.

The consistent HDG formulation introduced in \cite{Monk2010HybridizingRE} and considered in this work is defined as follows.

\begin{definition}[HDG Formulation] \label{def::hdgformulation}
For given $g \in \ltwo(\bnd)$ and $\w > 0$ find $\sigma \in \hdiv$, $u \in \ltwo(\domain)$, $\hat u \in \ltwo(\mathcal{F})$, $\hat \sigma_\n \in \ltwo(\mathcal{F})$ such that
\begin{align}
\begin{split}
	B(\sigma, \hat \sigma_\n, u, \hat u, \tau, \hat \tau_\n, v, \hat v) = - (g, \hat v)_\bnd \label{eq::weakform}
\end{split}
\end{align}
holds for all $\tau \in \hdiv$, $v \in \ltwo(\domain)$, $\hat v \in \ltwo(\mathcal{F})$, $\hat \tau_\n \in \ltwo(\mathcal{F})$
with the sesquilinear form (SLF)
\begin{align}
\begin{split}
	B(\sigma, \hat \sigma_\n, u, \hat u, \tau, \hat \tau_\n, v, \hat v) :=
	&\sum_{T \in \mathcal{T}} \Big ( j \w (\sigma, \tau)_T + (u, \diver \tau)_T + (\diver \sigma, v)_T - j \w (u, v)_T \\
	&- (\hat u, \tau \cdot \n)_\elbnd - (\sigma \cdot \n, \hat v)_\elbnd - \alpha ([u], [v])_\elbnd + \beta ( \llbracket \sigma \rrbracket, \llbracket \tau \rrbracket)_\elbnd \Big ) - (\hat u, \hat v)_\bnd.
\end{split}
\end{align}
The jumps are defined as
\begin{align}
	[u] := u - \hat u, && \llbracket \sigma \rrbracket := \sigma \cdot \n - \hat \sigma_{\n}, && \text{ on } \elbnd
\end{align}
and the stabilisation parameters are $\alpha := 1$, $\beta := 1$.
\end{definition}
The normal vector $\n$ is per default outward-oriented on $T$
and the use as the subscript for the facet variables $\hat \sigma_\n$ and $\hat \tau_\n$ indicates that the variables are scalar-valued and their sign changes with respect to the adjacent elements.
\begin{remark}
An interesting property of the HDG formulation is the choice of the $h$-independent parameters $\alpha$ and $\beta$.
The reason for the choices in this study is due to the favourable properties for iterative solvers and their preconditioning established in \cite{huberphd, huberproceedings, huberproceedingshelmholtz}.
\end{remark}

For methods with static condensation, the sub-problems defined on individual elements must be uniquely and stably solvable.
The HDG formulation \eqref{eq::weakform} satisfies that condition and the following discrete absolute stability result holds.

\begin{theorem} \label{th::stability}
Assuming $H^2$-regularity of the adjoint problem, there exists a constant $\tilde C > 0$, independent of $\w, h, \alpha, \beta$,
so that there holds for the discrete solution of the HDG formulation \eqref{eq::weakform}
\begin{align}
	\sum_{T \in \mathcal{T}} \alpha \lVert [u_h] \rVert^2_{\ltwo(\elbnd)} + \beta \lVert \llbracket \sigma_h \rrbracket \rVert^2_{\ltwo(\elbnd)} + \lVert \hat u_h \rVert^2_{\ltwo(\bnd)} &\leq \lVert g \rVert^2_{\ltwo(\bnd)}, \\
	\sum_{T \in \mathcal{T}} \w \lVert \sigma_h \rVert^2_{\ltwo(T)} &\leq \sum_{T \in \mathcal{T}} \w \lVert u_h \rVert^2_{\ltwo(T)} + \lVert g \rVert^2_{\ltwo(\bnd)}, \\
	\lVert u_h \rVert_{\ltwo(\domain)} &\leq C(\w, h, \alpha, \beta) \lVert g \rVert_{\ltwo(\bnd)},
\end{align}
with the constant
\begin{align}
	C(\w, h,\alpha, \beta) = \tilde C
		\Bigg ( \left (\frac{2}{\alpha} + \beta \right ) (1+\w)^2 h
			+ 2 \alpha (1 + \w)^2 \w^2 h^3
			+ 1 \Bigg )^{\frac{1}{2}}.
\end{align}
\end{theorem}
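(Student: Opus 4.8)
The plan is to obtain the first two estimates by a direct energy argument and the third by a duality (Aubin--Nitsche) argument; only the latter is delicate. First I would insert the discrete solution itself, $(\tau,\hat\tau_\n,v,\hat v)=(\sigma_h,\hat\sigma_{h,\n},u_h,\hat u_h)$, into \eqref{eq::weakform}. Since the mass terms carry the factor $j\w$ while the volume coupling $(u_h,\diver\sigma_h)_T+(\diver\sigma_h,u_h)_T$, the facet coupling $(\hat u_h,\sigma_h\cdot\n)_\elbnd+(\sigma_h\cdot\n,\hat u_h)_\elbnd$ and both stabilisation terms are real, the imaginary part collapses to $\sum_{T}\w(\lVert\sigma_h\rVert_T^2-\lVert u_h\rVert_T^2)=-\operatorname{Im}(g,\hat u_h)_\bnd$.

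For the real part I would exploit linearity in the test argument and additionally test with $(v=u_h)$, with $(\tau=\sigma_h,\hat\tau_\n=\hat\sigma_{h,\n})$ and with $(\hat v=\hat u_h)$ separately; substituting the resulting relations back makes every volume and cross term telescope into squared interface jumps, leaving the clean identity $\sum_T\alpha\lVert[u_h]\rVert^2_\elbnd+\beta\lVert\llbracket\sigma_h\rrbracket\rVert^2_\elbnd+\lVert\hat u_h\rVert^2_\bnd=\operatorname{Re}(g,\hat u_h)_\bnd$. Writing $S$ for the left-hand side, Cauchy--Schwarz gives $S\le\lVert g\rVert_\bnd\lVert\hat u_h\rVert_\bnd$, and since $\lVert\hat u_h\rVert^2_\bnd\le S$ this self-improves to $S\le\lVert g\rVert^2_\bnd$, which is the first estimate. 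Feeding $\lVert\hat u_h\rVert_\bnd\le\lVert g\rVert_\bnd$ into the imaginary-part relation and bounding $|\operatorname{Im}(g,\hat u_h)_\bnd|\le\lVert g\rVert_\bnd\lVert\hat u_h\rVert_\bnd\le\lVert g\rVert^2_\bnd$ yields the second estimate.

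For the $\ltwo$-bound on $u_h$ I would run a duality argument. Let $(\phi,w)$ solve the adjoint problem of Definition \ref{def::adjointproblem} with data $f:=u_h$, so that $\lVert u_h\rVert^2_{\ltwo(\domain)}=(u_h,f)_\domain$, and record the two assumed $H^2$ regularity bounds, which control $\lVert\grad w\rVert_{\ltwo(\domain)}$, $\w\lVert w\rVert_{\ltwo(\domain)}$, $\w\lVert\phi\rVert_{\ltwo(\domain)}$ by $C\w\lVert u_h\rVert_{\ltwo(\domain)}$ and $\lVert\grad^2 w\rVert_{\ltwo(\domain)}$, $\w\lVert\grad\phi\rVert_{\ltwo(\domain)}$ by $C(1+\w)\w\lVert u_h\rVert_{\ltwo(\domain)}$. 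Using consistency of the formulation, the exact adjoint pair together with its single-valued normal and Dirichlet traces may be used as test functions; replacing $(\phi,w)$ and its traces by their projections into $\mathcal{RT}^p(\mathcal{T})\times\mathcal{P}^p(\mathcal{T})\times\mathcal{P}^p(\mathcal{F})\times\mathcal{P}^p(\mathcal{F})$ and invoking Galerkin orthogonality, the identity $\lVert u_h\rVert^2_{\ltwo(\domain)}=(u_h,f)_\domain$ turns into a sum of products of the interface residuals $[u_h]$, $\llbracket\sigma_h\rrbracket$ and $\hat u_h$ with the corresponding projection errors of $(\phi,w)$.

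Finally I would bound those products. The residual factors are controlled by the first estimate (hence by $\lVert g\rVert_\bnd$), while the projection-error factors are estimated by the standard powers of $h$ times the $H^2$-seminorms of $\phi$ and $w$, into which the regularity bounds insert the $(1+\w)$ and $\w$ factors. Applying Young's inequality with weights chosen to match $\alpha$ and $\beta$ reproduces precisely the coefficients $\left(\tfrac{2}{\alpha}+\beta\right)(1+\w)^2h$ and $2\alpha(1+\w)^2\w^2h^3$ together with the residual $O(1)$ contribution, and dividing through by $\lVert u_h\rVert_{\ltwo(\domain)}$ gives the stated constant $C(\w,h,\alpha,\beta)$. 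The main obstacle is exactly this last step: one must choose the projection operators (of commuting-diagram type for $\mathcal{RT}$) so that each consistency term is paired with the stabilisation weight that absorbs it, and keep every power of $h$, $\w$, $\alpha$ and $\beta$ explicit throughout; by comparison the energy identities are routine.
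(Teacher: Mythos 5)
Your proposal follows essentially the same route as the paper: the first two bounds come from testing with the discrete solution and its sign-flipped variant and separating real and imaginary parts (your ``self-improvement'' step $S\le\lVert g\rVert_\bnd\lVert\hat u_h\rVert_\bnd$ with $\lVert\hat u_h\rVert^2_\bnd\le S$ is exactly the paper's argument), and the third bound comes from Aubin--Nitsche with $f=u_h$, splitting the adjoint pair into its projection plus a projection error so that only facet residuals paired with projection errors survive. Two points in your sketch need care, though. First, the term you call the ``residual $O(1)$ contribution'' is $-(g,\Pi_F w)_\bnd$, coming from inserting the projected adjoint pair into the discrete equation; to turn it into the $\w$-independent ``$+1$'' in $C(\w,h,\alpha,\beta)$ you must bound $\lVert w\rVert_\bnd$ by $C\lVert f\rVert_\domain$ with $C$ independent of $\w$. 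A generic trace inequality through $\lVert w\rVert_{H^1}$ would introduce spurious $\w$-factors; the paper derives the needed bound directly from the adjoint PDE (Lemma \ref{lem::trace}, via $\lVert\phi\cdot\n\rVert_\bnd^2\le\w^{-1}\lVert f\rVert_\domain\,\w\lVert w\rVert_\domain$), and your proposal does not account for this step. Second, your parenthetical recommendation of a commuting-diagram $\mathcal{RT}$ interpolant is at odds with the mechanism you describe: the cancellation of the volume term $j\w(\sigma_h,\phi-\Pi\phi)_T$ for arbitrary $\sigma_h\in\mathcal{RT}^p(T)$ requires $\Pi$ to be the $\ltwo$-projection onto the full local $\mathcal{RT}^p(T)$ space (as in Lemma \ref{lem::adjprojectproperty}); the canonical interpolant is only orthogonal against $[\mathcal{P}^{p-1}(T)]^d$ and would leave an uncontrolled $O(\w)$ volume term. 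The tailored commuting-type projection is needed only later, for the flux estimate of Theorem \ref{th::sigmaerror}, not here.
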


This theorem implies the existence and uniqueness of the discrete solution for the HDG formulation, without a resolution condition on the discrete space.
The proof uses a similar approach as introduced in \cite{maxwellabsolute} for Maxwell's equations. 
Two major differences are that only a single facet space is used in the mentioned work and the stabilisation parameter therein is $h$-dependent.
The method in \cite{maxwellabsolute} is related to the case of choosing $\alpha \approx \frac{1}{h}$, $\beta = 0$ and omitting the second facet variable in the HDG formulation \eqref{eq::weakform}.
The main idea of the proof is to use an Aubin-Nitsche technique along with $\ltwo$-projections of $\sigma$ and $u$ so that volume terms vanish and only facet terms remain.
The real part of the HDG formulation controls those facet terms.
With similar techniques and arguments, the following optimal convergence rates for jumps and the pressure are proven.
\begin{theorem} \label{th::faceterror}
Assuming $H^{p+2}$-regularity of the Helmholtz problem, there exists a constant $C > 0$, independent of $\w, h, \alpha, \beta$, such that
\begin{align}
\begin{split}
	\sum_{T \in \mathcal{T}} \alpha \lVert [u - u_h] \rVert_{\ltwo(\elbnd)}^2 + \beta \Vert \llbracket \sigma - \sigma_h \rrbracket \rVert_{\ltwo(\elbnd)}^2 + \lVert u &- \hat u_h \rVert_{\ltwo(\bnd)}^2 \\
		&\leq C h^{2p + 1} \Bigg ( \left (\frac{2}{\alpha} + \beta \right )  \lVert \sigma \rVert_{H^{p+1}(\domain)}^2 + 2 \alpha \lVert u \rVert_{H^{p+1}(\domain)}^2 \Bigg ).
\end{split}
\end{align}
\end{theorem}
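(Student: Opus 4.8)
The plan is to run the direct energy argument that underlies the first estimate of Theorem \ref{th::stability}, now for the error rather than the discrete solution. First I would establish consistency: inserting the exact pair $(\sigma,u)$ together with the single-valued facet traces $\hat u = u|_{\mathcal F}$ and $\hat\sigma_\n = \sigma\cdot\n|_{\mathcal F}$ into $B$ and integrating by parts element-wise reproduces $-(g,\hat v)_\bnd$, because the two field equations make the volume contributions collapse, the stabilisation terms vanish since $[u]=0$ and $\llbracket\sigma\rrbracket=0$ for the exact solution, the interior normal fluxes $\sigma\cdot\n$ cancel between neighbours, and the remaining boundary term is $-(\sigma\cdot\n+u,\hat v)_\bnd=-(g,\hat v)_\bnd$. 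Consistency gives Galerkin orthogonality: the error is $B$-orthogonal to every discrete tuple in $\mathcal X_h$.

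Next I would split the error through the $\ltwo$-projections onto $\mathcal{RT}^p(\mathcal T)$, $\mathcal P^p(\mathcal T)$ and $\mathcal P^p(\mathcal F)$, writing each component as an approximation part $\eta$ (solution minus projection) minus a discrete part $\xi\in\mathcal X_h$, so that by orthogonality $B(\xi,\xi)=B(\eta,\xi)$. The decisive feature of the plain $\ltwo$-projections is that they remove the volume contributions of $B(\eta,\xi)$ without leaving any power of $\w$: the terms $j\w(\eta_\sigma,\xi_\sigma)_T$, $-j\w(\eta_u,\xi_u)_T$ and $(\eta_u,\diver\xi_\sigma)_T$ vanish because $\xi_\sigma\in\mathcal{RT}^p$ and $\diver\xi_\sigma,\xi_u\in\mathcal P^p$ are orthogonal to the respective projection errors, while integrating $(\diver\eta_\sigma,\xi_u)_T$ by parts leaves only the facet trace $(\eta_\sigma\cdot\n,\xi_u)_\elbnd$, since $\grad\xi_u\in\mathcal{RT}^p$ is again orthogonal to $\eta_\sigma$. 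The facet projection annihilates $(\hat\eta_u,\xi_\sigma\cdot\n)_\elbnd$ and $(\hat\eta_u,\hat\xi_u)_\bnd$ because normal traces of $\mathcal{RT}^p$ lie in $\mathcal P^p(\mathcal F)$. What survives on the right-hand side is thus the pure facet expression $(\eta_\sigma\cdot\n,[\xi_u])_\elbnd+\beta(\llbracket\eta_\sigma\rrbracket,\llbracket\xi_\sigma\rrbracket)_\elbnd-\alpha([\eta_u],[\xi_u])_\elbnd$.

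The heart of the argument, and the step I expect to be the main obstacle, is to show that $\mathrm{Re}\,B(\xi,\xi)$ actually controls the target norms, because the form is indefinite on the facet unknowns: the diagonal stabilisation supplies $-\alpha\lVert[\xi_u]\rVert_{\ltwo(\elbnd)}^2$, $+\beta\lVert\llbracket\xi_\sigma\rrbracket\rVert_{\ltwo(\elbnd)}^2$ and $-\lVert\hat\xi_u\rVert_{\ltwo(\bnd)}^2$ with mixed signs, accompanied by the real volume cross terms. To resolve this I would proceed exactly as in the stability proof: test the error equation with $\tau\in\mathcal{RT}^p$ to obtain the discrete momentum relation $(j\w\xi_\sigma-\grad\xi_u,\tau)_T=-([\xi_u]+\beta\llbracket\xi_\sigma\rrbracket,\tau\cdot\n)_\elbnd$ up to an $\eta$-dependent facet source, set $\tau=\xi_\sigma$, and use that $\mathrm{Re}(j\w\lVert\xi_\sigma\rVert_T^2)=0$ to turn the volume cross term into facet terms; the spurious $(\,[\xi_u],\xi_\sigma\cdot\n)_\elbnd$ contributions then cancel and the transmission identity $\sum_T(\llbracket\xi_\sigma\rrbracket,\hat\xi_{\sigma,\n})_\elbnd=0$ disposes of the facet-flux unknown. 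The indefinite combination collapses to the definite quantity $-\big(\sum_T\alpha\lVert[\xi_u]\rVert^2+\beta\lVert\llbracket\xi_\sigma\rrbracket\rVert^2+\lVert\hat\xi_u\rVert_{\ltwo(\bnd)}^2\big)$, modulo the $\eta$-dependent facet sources, which are carried to the right-hand side.

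Finally I would bound the surviving facet pairings by Cauchy--Schwarz and Young's inequality, distributing the weights so that $\tfrac1\alpha$ and $\beta$ fall on the flux traces $\eta_\sigma\cdot\n$ and $\llbracket\eta_\sigma\rrbracket$ and $\alpha$ on the pressure jump $[\eta_u]$, and absorbing a fixed fraction of the $\xi$-facet norms into the left-hand side. Standard $\ltwo$- and Raviart--Thomas projection error estimates together with the trace inequality --- each facet restriction contributing one half power of $h$ --- then yield the factor $h^{2p+1}$ and the split $\big(\tfrac2\alpha+\beta\big)\lVert\sigma\rVert_{H^{p+1}(\domain)}^2+2\alpha\lVert u\rVert_{H^{p+1}(\domain)}^2$. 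A concluding triangle inequality, combining the bound for the discrete-error jumps with the analogous approximation-error traces of $\eta$, promotes the estimate from $\xi$ to the full error $\sigma-\sigma_h$, $u-u_h$ and gives the claim.
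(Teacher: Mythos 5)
Your proposal is correct and, in its overall architecture, coincides with the paper's proof: Galerkin orthogonality, splitting the error through the plain $\ltwo$-projections so that all volume terms of the error equation vanish and only facet pairings with the projection errors survive (this is exactly Lemma \ref{lem::errorweak}), then weighted Cauchy--Schwarz with the weights $2/\alpha$, $2\alpha$, $\beta$ and the $h^{p+1/2}$ facet approximation estimates, and a final triangle inequality. The one place where you take a genuinely different --- and considerably harder --- road is the step you yourself flag as the main obstacle. The paper needs neither the discrete momentum relation, nor the cancellation of the $([\xi_u],\xi_\sigma\cdot\n)_{\elbnd}$ terms, nor the transmission identity for $\hat\xi_{\sigma,\n}$: it simply tests the error equation with $(e_\sigma,e_{\hat\sigma},-e_u,-e_{\hat u})$, i.e.\ with the sign of the two scalar test components flipped. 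With that choice every cross term of $B$ becomes purely imaginary ($j\w\lVert e_\sigma\rVert_T^2$, $2j\Im(e_u,\diver e_\sigma)_T$, $j\w\lVert e_u\rVert_T^2$, $-2j\Im(e_{\hat u},e_\sigma\cdot\n)_{\elbnd}$), so the real part is \emph{identically} $\sum_T\alpha\lVert[e_u]\rVert_{\elbnd}^2+\beta\lVert\llbracket e_\sigma\rrbracket\rVert_{\elbnd}^2+\lVert e_{\hat u}\rVert_{\bnd}^2$ --- the same one-line computation as in Lemma \ref{lam::bounds}, with no auxiliary identities and no indefiniteness to repair. Your detour does appear to close: the $\beta(\eta_\sigma\cdot\n,\xi_\sigma\cdot\n)_{\elbnd}$ source produced by the momentum relation, which pairs against a full normal trace rather than a jump and at first sight cannot be absorbed, recombines with the transmission identity into a jump pairing. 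But this route is more error-prone and buys nothing; the sign-flipped test function is the intended shortcut, and it is precisely what makes the real part of the right-hand side of Lemma \ref{lem::errorweak} directly estimable by the jump norms that appear on the left.
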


\begin{theorem} \label{th::scalarerror}
Assuming $H^2$-regularity of the adjoint problem and $H^{p+2}$-regularity of the Helmholtz problem,
there exists a constant $C > 0$, independent of $\w, h, \alpha, \beta$, such that
\begin{align}
\begin{split}
	\lVert u - &u_h \rVert_{\ltwo(\domain)} \\
	&\leq C h^{p + 1} \left ( \left (\frac{2}{\alpha} + \beta \right ) \left ( 1 + \w \right )^2 + 2 \alpha (1 + \w)^2 \w^2 h^{2} \right)^{\frac{1}{2}}
		\left ( \left (\frac{2}{\alpha} + \beta \right ) \lVert \sigma \rVert_{H^{p+1}(\domain)}^2 + 2 \alpha  \lVert u \rVert_{H^{p+1}(\domain)}^2 \right )^{\frac{1}{2}}.
\end{split}
\end{align}
\end{theorem}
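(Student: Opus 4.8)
The plan is to run an Aubin--Nitsche duality argument whose skeleton mirrors the proof of Theorem~\ref{th::stability}, but which feeds the sharp facet bound of Theorem~\ref{th::faceterror} into the estimate instead of the crude stability constant. Writing the error as $e := (\sigma - \sigma_h,\, \hat\sigma_\n - \hat\sigma_{\n,h},\, u - u_h,\, \hat u - \hat u_h)$, I would first solve the adjoint problem of Definition~\ref{def::adjointproblem} with volume datum $f := u - u_h$, obtaining $(\phi, w)$ together with its single-valued facet traces $\hat w := w|_\mathcal{F}$ and $\hat\phi_\n := \phi\cdot\n|_\mathcal{F}$. Because this continuous adjoint solution is conforming, all of its jumps vanish; integrating \eqref{eq:adjfirst}--\eqref{eq:adjboundary} against the error and using the homogeneous Robin condition then yields the adjoint-consistency identity $\lVert u - u_h\rVert^2_{\ltwo(\domain)} = B(e;\, \phi, \hat\phi_\n, w, \hat w)$, once the signs and complex conjugations are tracked carefully.

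Next I would insert Galerkin orthogonality. Let $\Pi_h$ be the interpolation into $\mathcal{X}_h$ assembled from the $\ltwo$-projection onto $\mathcal{P}^p(\mathcal{T})$ for the scalar components and the Raviart--Thomas projection for the flux --- the very projections that annihilate the volume terms in Theorem~\ref{th::stability}. Since $B(e; \xi_h) = 0$ for all $\xi_h \in \mathcal{X}_h$, the identity becomes $\lVert u - u_h\rVert^2_{\ltwo(\domain)} = B(e;\, \phi - \Pi_h\phi,\, \hat\phi_\n - \Pi_h\hat\phi_\n,\, w - \Pi_h w,\, \hat w - \Pi_h\hat w)$. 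The commuting-diagram and $\ltwo$-orthogonality properties of $\Pi_h$ cancel the four volume contributions $j\w(\cdot,\cdot)_T$, $(\cdot,\diver\cdot)_T$, $(\diver\cdot,\cdot)_T$ and $j\w(\cdot,\cdot)_T$, leaving only skeleton terms pairing $[u-u_h]$, $\llbracket\sigma-\sigma_h\rrbracket$ and $\hat u - \hat u_h$ against the facet projection errors of the adjoint traces.

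I would then apply Cauchy--Schwarz on $\mathcal{F}$, splitting each pairing so that the primal factor is exactly $\big(\sum_{T}\alpha\lVert[u-u_h]\rVert^2_{\ltwo(\elbnd)} + \beta\lVert\llbracket\sigma-\sigma_h\rrbracket\rVert^2_{\ltwo(\elbnd)} + \lVert u - \hat u_h\rVert^2_{\ltwo(\bnd)}\big)^{1/2}$, which, by Theorem~\ref{th::faceterror}, is bounded by $C h^{p+1/2}\big((\tfrac{2}{\alpha}+\beta)\lVert\sigma\rVert^2_{H^{p+1}(\domain)} + 2\alpha\lVert u\rVert^2_{H^{p+1}(\domain)}\big)^{1/2}$. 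The complementary adjoint factor collects the flux traces with weight $\tfrac{2}{\alpha}+\beta$ and the scalar trace with weight $2\alpha$; scaled trace inequalities plus the approximation properties of $\Pi_h$ bound it by $C h^{1/2}\big((\tfrac{2}{\alpha}+\beta)\lVert\grad\phi\rVert^2_{\ltwo(\domain)} + 2\alpha h^2\lVert\grad^2 w\rVert^2_{\ltwo(\domain)}\big)^{1/2}$. Invoking the adjoint regularity $\w\lVert\grad\phi\rVert_{\ltwo(\domain)} \le C(1+\w)\w\lVert f\rVert_{\ltwo(\domain)}$ and $\lVert\grad^2 w\rVert_{\ltwo(\domain)} \le C(1+\w)\w\lVert f\rVert_{\ltwo(\domain)}$ turns this into $C h^{1/2}\big((\tfrac{2}{\alpha}+\beta)(1+\w)^2 + 2\alpha(1+\w)^2\w^2 h^2\big)^{1/2}\lVert f\rVert_{\ltwo(\domain)}$. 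With $f = u - u_h$, dividing by $\lVert u - u_h\rVert_{\ltwo(\domain)}$ and combining $h^{p+1/2}\cdot h^{1/2} = h^{p+1}$ gives the claim.

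The main obstacle will be the $\w$-explicit bookkeeping of this adjoint factor: one must route each facet projection error to the correct regularity estimate so that the flux, approximated in $H^1$, contributes the $(1+\w)^2$ term through $\lVert\grad\phi\rVert \le C(1+\w)\lVert f\rVert$, while the scalar trace, approximated to one higher order in $H^2$, contributes the $2\alpha(1+\w)^2\w^2 h^2$ term (the extra factors $\w$ and $h^2$ stemming from the second-derivative bound). Matching the weights $\tfrac{2}{\alpha}+\beta$ and $2\alpha$, distributing the $h$-powers, and pinning down the signs and conjugations in the adjoint-consistency identity so that everything aligns with the stated constant is the delicate part; by contrast, once $\Pi_h$ is fixed the vanishing of the volume terms and the invocation of Theorem~\ref{th::faceterror} are routine. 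It is reassuring that multiplying the first factor by $h$ recovers precisely the leading terms of the stability constant $C(\w,h,\alpha,\beta)$ of Theorem~\ref{th::stability}, confirming that the adjoint estimate is the same in both proofs.
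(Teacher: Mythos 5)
There is a genuine gap at the centre of your argument: the claim that inserting $\Pi_h$ makes the four volume terms cancel. After you use Galerkin orthogonality, the identity reads $\lVert u-u_h\rVert^2_{\ltwo(\domain)}=B(\sigma-\sigma_h,\dots,u-u_h,\dots;\,\phi-\Pi\phi,\dots,w-\Pi w,\dots)$, so the \emph{trial} slots carry the full errors $\sigma-\sigma_h$ and $u-u_h$, which are not discrete. The cancellations $j\w(\tau_h,\phi-\Pi\phi)_T=0$, $(\diver\tau_h,w-\Pi w)_T=0$, etc.\ (Lemma \ref{lem::adjprojectproperty}) are orthogonality statements that hold only when $\tau_h\in\mathcal{RT}^p(T)$ and $v_h\in\mathcal{P}^p(T)$; with $\sigma-\sigma_h$ and $u-u_h$ in those slots the volume terms survive and reduce to products of two projection errors, e.g.\ $j\w(\sigma-\Pi\sigma,\phi-\Pi\phi)_T$ and $-j\w(u-\Pi u,w-\Pi w)_T$. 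The last of these is of size $C\,h^{p+3}\w^2(1+\w)\lvert u\rvert_{H^{p+1}}\lVert u-u_h\rVert$, which exceeds the corresponding term $2\alpha h^{p+2}\w(1+\w)\lVert u\rVert_{H^{p+1}}$ of the claimed bound by a factor $h\w$, so your route does not close without a resolution-type condition --- exactly what the paper is designed to avoid. (A secondary inaccuracy: the paper's $\Pi$ is the $\ltwo$-projection onto $\mathcal{RT}^p(T)$, not the Raviart--Thomas interpolant; the latter is only orthogonal to $[\mathcal{P}^{p-1}]^d$ and would break even the discrete-argument cancellations.)

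The paper resolves this tension the other way around: it keeps the \emph{projected} errors $e_\sigma=\Pi\sigma-\sigma_h$, $e_u=\Pi u-u_h$ in the trial slots, so that they are discrete and Lemma \ref{lem::adjprojectproperty} genuinely kills the volume terms, and it accepts that Galerkin orthogonality then fails for these projected errors. The price is Lemma \ref{lem::errorweak}, which shows that $B(e_\sigma,e_{\hat\sigma},e_u,e_{\hat u};\cdot)$ equals a collection of additional facet terms involving $\sigma\cdot\n-\Pi\sigma\cdot\n$ and $u-\Pi u$ tested against $[\Pi w]$ and $\llbracket\Pi\phi\rrbracket$; these are estimated by Cauchy--Schwarz alongside the terms from Lemma \ref{lem::adjprojectproperty}, using the jump bounds of Theorem \ref{th::faceterror} and the adjoint regularity exactly as you describe for your facet factor. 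Your final bookkeeping of the $\w$-dependence and the observation that the adjoint factor matches the stability constant are correct in spirit, but you must either switch to the projected errors and supply the analogue of Lemma \ref{lem::errorweak}, or explicitly carry and absorb the residual volume terms --- and the latter does not reproduce the stated constant without an extra smallness assumption on $h\w$.
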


Due to the $h$-independent stabilisation, the optimal convergence rate for the flux cannot be straightforwardly derived.
For the following result, a more refined technique based on projections is needed.

\begin{theorem} \label{th::sigmaerror}
Assuming $H^2$-regularity of the adjoint problem and $H^{p+2}$-regularity of the Helmholtz problem,
there exists a constant $C(\w, \alpha, \beta) > 0$, independent of $h$, such that
\begin{align}
	\lVert \sigma - \sigma_h \rVert_{\ltwo{(\domain)}} \leq C(\w, \alpha, \beta) h^{p+1} \left (\lVert \sigma \rVert_{H^{p+1}(\domain)} + \lVert u \rVert_{H^{p+1}(\domain)} \right).
\end{align}
\end{theorem}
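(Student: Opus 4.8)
The plan is to bound the flux error by splitting it through projections, reducing everything to a discrete error that is controlled by the pressure estimate already available from Theorem \ref{th::scalarerror}, and then to recover the missing half power in the mesh size by a duality argument that exploits the assumed $H^2$-regularity of the adjoint problem.

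First I would write $\sigma - \sigma_h = \eta_\sigma + \varepsilon_\sigma$ with the projection error $\eta_\sigma := \sigma - \Pi\sigma$ and the discrete error $\varepsilon_\sigma := \Pi\sigma - \sigma_h$, and analogously for $u$, $\hat u$ and $\hat\sigma_\n$. For $\sigma$ the natural choice is the Raviart--Thomas projection onto $\mathcal{RT}^p(T)$, so that the commuting properties $\diver\Pi\sigma = P^p\diver\sigma$ and $(\Pi\sigma)\cdot\n = P^p(\sigma\cdot\n)$ hold on every element and facet; for $u$ and the facet variables I would start from $\ltwo$-projections. Standard approximation theory then gives $\norm[\eta_\sigma]_{\ltwo(\domain)}\leq C h^{p+1}\norm[\sigma]_{H^{p+1}(\domain)}$, so it remains only to estimate the discrete error $\varepsilon_\sigma$.

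Next I would use consistency of the sesquilinear form to obtain the Galerkin orthogonality $B(\varepsilon_\sigma,\varepsilon_{\hat\sigma},\varepsilon_u,\varepsilon_{\hat u};\tau,\hat\tau_\n,v,\hat v) = -B(\eta_\sigma,\eta_{\hat\sigma},\eta_u,\eta_{\hat u};\tau,\hat\tau_\n,v,\hat v)$ for all discrete test functions. The commuting properties of the Raviart--Thomas projection annihilate almost every term on the right-hand side: $(\diver\eta_\sigma,v)_T = 0$, $(\eta_\sigma\cdot\n,\hat v)_\elbnd = 0$, the jump $\llbracket\eta_\sigma\rrbracket = 0$, and, since $\diver\tau\in\mathcal{P}^p(T)$, also $(\eta_u,\diver\tau)_T = 0$ together with the $j\w(\eta_u,v)_T$ and the $\hat u$-facet contributions. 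Only $j\w(\eta_\sigma,\tau)_T$ and the $\alpha$-facet term $\alpha([\eta_u],[v])_\elbnd$ survive. Testing with the discrete error itself and taking the imaginary part---exactly the mechanism behind the second estimate of Theorem \ref{th::stability}---then produces an energy identity of the form $\sum_{T}\w\norm[\varepsilon_\sigma]_{\ltwo(T)}^2 = \sum_{T}\w\norm[\varepsilon_u]_{\ltwo(T)}^2 + (\text{projection terms})$, in which $\sum_{T}\w\norm[\varepsilon_u]_{\ltwo(T)}^2$ is optimal by Theorem \ref{th::scalarerror} combined with the approximation of $u$.

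The main obstacle is the surviving facet term $\alpha([\eta_u],[\varepsilon_u])_\elbnd$. Estimating it by Cauchy--Schwarz and Theorem \ref{th::faceterror} only yields $O(h^{2p+1})$, hence the suboptimal rate $h^{p+1/2}$ for the flux; this is precisely the difficulty caused by the mesh-size-independent stabilisation. To recover the full order I would invoke the assumed $H^2$-regularity of the adjoint problem of Definition \ref{def::adjointproblem} through an Aubin--Nitsche duality argument: I would set up a dual problem whose data is tied to $\varepsilon_\sigma$, represent $\norm[\varepsilon_\sigma]_{\ltwo(\domain)}^2$ by its solution $(\phi,w)$, and insert the error equation. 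Approximating $(\phi,w)$ by discrete functions and using the regularity bounds converts the half-order-deficient facet contribution into a genuinely higher-order term, at the price of the $\w$-dependence appearing in the constant $C(\w,\alpha,\beta)$. The delicate points I expect here are the design of the dual problem and of a tailored projection, in the spirit of \cite{cockburn2010projection}, that makes the stabilisation consistency terms superconverge: unlike the classical setting, the stabilisation is $h$-independent, so the unisolvence and approximation properties of such a projection must be re-established in the present $\alpha=\beta=1$ regime. Finally I would combine the optimal bound on $\varepsilon_\sigma$ with the approximation estimate for $\eta_\sigma$ through the triangle inequality to conclude $\norm[\sigma-\sigma_h]_{\ltwo(\domain)}\leq C(\w,\alpha,\beta)h^{p+1}(\norm[\sigma]_{H^{p+1}(\domain)}+\norm[u]_{H^{p+1}(\domain)})$.
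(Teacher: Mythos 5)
Your setup is sound up to the point where you isolate the obstruction: with the Raviart--Thomas interpolant the error equation does reduce to the volume term $j\w(\eta_\sigma,\tau)_T$ plus the facet term $\alpha([\eta_u],[v])_{\elbnd}$, the imaginary part does give the energy identity you describe, and that facet term is indeed what caps the rate at $h^{p+1/2}$ because the stabilisation is $h$-independent. The gap is in how you propose to recover the missing half order. The paper does not apply a duality argument to $\varepsilon_\sigma$ at this stage --- and such an argument is problematic: an adjoint problem whose data is the vector field $\varepsilon_\sigma$ does not enjoy the $H^2$ lift that drives Aubin--Nitsche (the quoted regularity is for scalar $\ltwo$ data $f$ in the second equation), and no representation of $\lVert\varepsilon_\sigma\rVert_{\ltwo(\domain)}^2$ by a dual solution is set up or available in this mixed setting. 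Duality has already been ``spent'' in Theorem \ref{th::scalarerror}; it is not invoked again for the flux.

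What actually closes the gap is the tailored projection alone, and its construction is precisely the technical content you defer as a ``delicate point''. The paper first eliminates $\hat u_h$ and $\hat\sigma_{\n,h}$ to obtain an equivalent DG form (Lemma \ref{lem::facetellim}), derives an identity for $\w\sum_T\lVert e_\sigma^P\rVert_T^2$ valid for an arbitrary projection $P$ (Lemma \ref{lem::dgproject}), and reads off from it the facet orthogonality conditions $P$ must satisfy so that \emph{all} facet terms vanish identically --- not merely superconverge. These conditions (Definition \ref{def::Projection}) couple $P_\sigma$ to $u$ through both the $\alpha$- and the $\beta$-stabilisation and generalise the Cockburn--Gopalakrishnan--Sayas projection; unisolvence rests on Lemmas \ref{lem::helper} and \ref{lem::helper2}, and the approximation bound $\lVert\sigma-P_\sigma\rVert_{\domain}\le C h^{p+1}\bigl(\vert\sigma\vert_{H^{p+1}}+\tfrac{1+\alpha\beta}{\alpha^{-1}+\beta}\vert u\vert_{H^{p+1}}\bigr)$ is Lemma \ref{lem::psigmaapproximation}. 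With that projection the identity collapses to $\w\sum_T\lVert e_\sigma^P\rVert_T^2\le\w\sum_T\lVert\sigma-P_\sigma\rVert_T\,\lVert e_\sigma^P\rVert_T+\lVert e_u^P\rVert_T^2$, and since $P_u=\Pi u$ the last term is exactly the quantity already bounded by $O(h^{2p+2})$ in Theorem \ref{th::scalarerror}; Young's inequality and the triangle inequality finish. So your proposal names the right ingredient but attributes the gain to the wrong mechanism and omits the construction and analysis of the projection, which is where the proof actually lives.
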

The explicit form of $C(\w, \alpha, \beta)$ can be seen in Section \ref{sec::sigmabestapprox}.
The contribution of \cite{griesmaier2011error}, which is based upon the techniques developed in \cite{cockburn2010projection}, motivates the proof of this theorem.
A comprehensive and thorough explanation of the later work can be found in \cite{Sayas2013FromRT}.
In these studies, HDG methods with $h$-independent stabilisation are analysed. 
The HDG formulation in \cite{griesmaier2011error} is similar to the formulation in this work.
The first major difference is that only a single facet variable with a comparable $\alpha$-stabilisation is used
and the second is that existence, uniqueness and optimal convergence rates are proven under the assumption of a resolution condition.
Proving optimal rates for $h$-independently stabilised HDG formulation is challenging, 
because element boundary terms need to be estimated via an inverse estimate with volume terms leading to sub-optimal rates.
The idea is to use a, to the HDG formulation tailored, projection into the discrete space, so that boundary terms vanish.
For that purpose, a special projection has been established and analysed in \cite{cockburn2010projection}.
In this study, due to the second $\beta$-stabilisation, that projection cannot be applied.
A generalisation needs to be established for the proof.

\section{Discrete Absolute Stability} \label{sec::discreteabsolutestability}

Usually, for finite element discretisations of the Helmholtz equation, a resolution condition is required to prove the existence and uniqueness of discrete solutions as well as quasi-optimality.
These results hold in the asymptotic regime. Several publications have established the existence and uniqueness for DG and HDG methods without such a condition,
e.g. \cite{feng2009discontinuous, doi:10.1090/S0025-5718-2011-02475-0, feng2013absolutely, doi:10.1007/s10915-013-9726-8, moiola2014helmholtz, wu2014pre, FengLuXu+2016+429+445}.
The idea is to use the discrete test function $\x \cdot \grad u_h$ which is contained in the discrete discontinuous space and leads to positive volume terms.
The disadvantage is that the pre-asymptotic and the asymptotic analysis require different techniques.

A new approach based upon $\ltwo$-projections, which leads to results viable in the pre-asymptotic as well as the asymptotic regime, has been developed in \cite{maxwellabsolute}.
In this section, the technique therein is applied to the HDG formulation \eqref{eq::weakform}, with the additional consideration of a second facet variable and $\beta$-stabilisation as well as $h$-independent $\alpha$-stabilisation.

The following lemma establishes the consistency of the adjoint Helmholtz problem in Definition \ref{def::adjointproblem}.

\begin{lemma}[Weak Adjoint Formulation] \label{lem::weakadjoint}
The adjoint solution $(\phi, w)$ of \eqref{eq:adjfirst} - \eqref{eq:adjboundary},  satisfies
\begin{align}
	B( \tau_h, \hat \tau_{\n,h}, v_h, \hat v_h, \phi, \phi \cdot \n, w, w) = (v_h, f)_\domain,
\end{align}
\end{lemma}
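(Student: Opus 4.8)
**The plan is to verify the identity by plugging the adjoint solution into the sesquilinear form and systematically checking that the element-wise contributions collapse to the right-hand side using the strong adjoint equations and boundary condition.** The key observation is that the adjoint solution is smooth (globally in $H^1$ with $\phi \in H(\diver, \Omega)$), so its traces are single-valued across interior facets. This means that when I insert $(\sigma, \hat\sigma_\n, u, \hat u) = (\phi, \phi\cdot\n, w, w)$ into $B$, the two jump terms vanish identically: $[w] = w - w = 0$ and $\llbracket \phi \rrbracket = \phi\cdot\n - \phi\cdot\n = 0$. This is precisely the consistency mechanism that makes the scheme consistent, and it removes the $\alpha$- and $\beta$-stabilisation terms at once.

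First I would write out $B(\tau_h, \hat\tau_{\n,h}, v_h, \hat v_h, \phi, \phi\cdot\n, w, w)$ explicitly, being careful with the conjugation convention (the second argument is conjugated, and here the adjoint solution sits in the second slot). After dropping the stabilisation terms, the remaining element-wise volume contributions are
\begin{align*}
	\sum_{T} \Big( j\w(\tau_h, \phi)_T + (v_h, \diver\phi)_T + (\diver\tau_h, w)_T - j\w(v_h, w)_T \Big)
\end{align*}
together with the facet couplings $-(\hat v_h, \tau_h\cdot\n)_{\elbnd} - (\tau_h\cdot\n, w)_{\elbnd}$ and the boundary term $-(\hat v_h, w)_\bnd$. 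The plan is to integrate by parts on each element to move the derivative off $\tau_h$: the identity $(\diver\tau_h, w)_T = -(\tau_h, \grad w)_T + (\tau_h\cdot\n, w)_{\elbnd}$ lets me combine the gradient term with $j\w(\tau_h,\phi)_T$. By the strong adjoint equation \eqref{eq:adjfirst}, $j\w\phi + \grad w = 0$, so $j\w(\tau_h,\phi)_T - (\tau_h,\grad w)_T$ — modulo the conjugation, which turns $j\w\phi = -\grad w$ into a matching expression — cancels, and the boundary contribution $(\tau_h\cdot\n, w)_{\elbnd}$ arising from integration by parts cancels against the explicit coupling term $-(\tau_h\cdot\n, w)_{\elbnd}$.

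Next I would collect the $v_h$ and $\hat v_h$ terms. The volume part $(v_h, \diver\phi)_T - j\w(v_h, w)_T = (v_h, \diver\phi - j\w w)_T$; note the conjugation on the second slot yields $\overline{\diver\phi + j\w w}$, which by \eqref{eq:adjsecond} equals $\overline{f}$, producing exactly $(v_h, f)_\domain$ after summing over elements. Finally the remaining facet terms involving $\hat v_h$ must vanish: the interior contributions $-(\hat v_h, \phi\cdot\n)_{\elbnd}$ cancel across shared facets because $\hat v_h$ is single-valued while $\phi\cdot\n$ flips sign with the outward normal, so adjacent elements contribute opposite amounts; on the boundary facets the leftover $-(\hat v_h, \phi\cdot\n)_\bnd - (\hat v_h, w)_\bnd = -(\hat v_h, \phi\cdot\n + w)_\bnd$ vanishes by the homogeneous Robin condition \eqref{eq:adjboundary}.

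\textbf{The main obstacle} is bookkeeping the conjugation convention correctly throughout. Because the adjoint solution occupies the \emph{second} (conjugated) argument of $B$, each strong adjoint equation must be conjugated before it can be applied, and one must check that the signs in $j\w\phi + \grad w = 0$ and $\diver\phi + j\w w = f$ — which differ in sign from the primal equations precisely because this is the adjoint — align with the conjugated form terms. I expect the $j\w$ sign flips to be the error-prone point: getting the cancellation in the flux-gradient pairing right, and confirming that the $-j\w(v_h,w)_T$ term combines with $(v_h,\diver\phi)_T$ to reproduce $+f$ rather than $-f$, is where the verification genuinely has to be done rather than waved through. Once the conjugation is handled consistently, the remaining cancellations are the standard DG telescoping of single-valued traces over interior facets.
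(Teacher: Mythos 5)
Your argument is correct and is essentially the paper's proof run in the opposite direction: the paper conjugates the strong adjoint equations, multiplies by $\tau_h, v_h$, integrates by parts and then adds the zero facet terms (normal continuity of $\phi$ and the Robin condition) to assemble $B$, while you expand $B$ and reduce it using exactly the same ingredients — vanishing jumps, element-wise integration by parts on the $(\diver\tau_h, w)_T$ pairing, the conjugated adjoint equations, and the facet/boundary cancellations. The only blemish is the transcription slip where you write the facet coupling as $-(\hat v_h, \tau_h\cdot\n)_{\elbnd}$ instead of $-(\hat v_h, \phi\cdot\n)_{\elbnd}$; you use the correct term later, so the argument is unaffected.
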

for all $(\tau_h, \hat \tau_{\n,h}, v_h, \hat v_h) \in \mathcal{X}_h$.
\begin{proof}
Conjugating \eqref{eq:adjfirst} and \eqref{eq:adjsecond} leads to
\begin{align}
	- j \w \conjugate \phi + \grad \conjugate w &= 0, \\
	- \diver \conjugate \phi + j \w \conjugate w &= - \conjugate f,
\end{align}
further multiplying by $\tau_h$, $v_h$ respectively and integrating over $T$ gives
\begin{align}
	- j \w (\tau_h, \phi)_T + (\tau_h,  \grad w)_T &= 0, \\
	- (v_h, \diver \phi)_T + j \w (v_h, w)_T &= - (v_h, f)_T.
\end{align}
Element wise partial integration of $(\tau_h,  \grad w)_T$ and summation over all elements results in
\begin{align}
	\sum_{T \in \mathcal{T}} j \w (\tau_h, \phi)_T + (\diver \tau_h, w)_T + (v_h, \diver \phi)_T - j \w (v_h, w)_T 
	- (\tau_h \cdot \n, w)_\elbnd = (v_h, f)_\domain.
\end{align}
Whenever jumps between variables on adjacent elements over facets are considered, then one side is marked by the $+$-subscript and the other by $-$.
The normal continuity of $\phi$ on $F_{\rm I}$ implies
\begin{align}
	0 = - (\hat v_h, \phi_+ \cdot \n_+ - \phi_- \cdot \n_+)_{F_{\rm I}} = - (\hat v_h, \phi_+ \cdot \n_+)_{F_{\rm I}} - (\hat v_h, \phi_- \cdot \n_-)_{F_{\rm I}}
\end{align}
and due to the boundary condition \eqref{eq:adjboundary} there holds 
\begin{align}
	0 = -(\hat v_h, \phi \cdot \n)_\bnd - (\hat v_h, w)_\bnd.
\end{align}
Adding these terms and considering that the $\alpha$- and $\beta$-stabilisations vanish due to the continuity of $\phi \cdot \n$ and $w$ on element boundaries concludes the proof.
\end{proof}

The following first stability results follow by considering the real ($\Re$) and imaginary ($\Im$) part of the SLF $B$ separately.
This lemma and the proof of it are similar to \cite[Lemma 3.1]{maxwellabsolute}.

\begin{lemma} \label{lam::bounds}
For the discrete solution of \eqref{eq::weakform} there holds
\begin{align}
	\sum_{T \in \mathcal{T}} \alpha \lVert [u_h] \rVert^2_\elbnd + \beta \lVert \llbracket \sigma_h \rrbracket \rVert^2_\elbnd + \lVert \hat u_h \rVert^2_\bnd &\leq \lVert g \rVert^2_\bnd, \label{eq::bndstabestimate}\\
	\sum_{T \in \mathcal{T}} \w \lVert \sigma_h \rVert^2_T &\leq \sum_{T \in \mathcal{T}} \w \lVert u_h \rVert^2_T + \lVert g \rVert^2_\bnd. \label{eq::volstabestimate}
\end{align}
\end{lemma}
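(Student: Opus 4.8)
The plan is to test the weak form \eqref{eq::weakform} with the discrete solution itself and to exploit the sesquilinearity by separating real and imaginary parts, paralleling \cite[Lemma 3.1]{maxwellabsolute}. Choosing $(\tau, \hat\tau_\n, v, \hat v) = (\sigma_h, \hat\sigma_{\n,h}, u_h, \hat u_h)$ gives $B(\sigma_h, \hat\sigma_{\n,h}, u_h, \hat u_h, \sigma_h, \hat\sigma_{\n,h}, u_h, \hat u_h) = -(g, \hat u_h)_\bnd$. First I would classify the terms: the volume terms $j\w(\sigma_h,\sigma_h)_T$ and $-j\w(u_h,u_h)_T$ are purely imaginary and equal $j\w\lVert\sigma_h\rVert_T^2$ and $-j\w\lVert u_h\rVert_T^2$; the conjugate pairs $(u_h,\diver\sigma_h)_T+(\diver\sigma_h,u_h)_T$ and $-(\hat u_h,\sigma_h\cdot\n)_\elbnd-(\sigma_h\cdot\n,\hat u_h)_\elbnd$ are real and equal $2\Re(u_h,\diver\sigma_h)_T$ and $-2\Re(\hat u_h,\sigma_h\cdot\n)_\elbnd$; and the stabilisation and boundary terms $-\alpha\lVert[u_h]\rVert_\elbnd^2$, $+\beta\lVert\llbracket\sigma_h\rrbracket\rVert_\elbnd^2$, $-\lVert\hat u_h\rVert_\bnd^2$ are real.

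The imaginary part then immediately yields $\sum_{T\in\mathcal{T}}\w(\lVert\sigma_h\rVert_T^2-\lVert u_h\rVert_T^2)=-\Im(g,\hat u_h)_\bnd$, which I set aside for now. The real part reads
\[
\sum_{T\in\mathcal{T}}\Big(2\Re(u_h,\diver\sigma_h)_T-2\Re(\hat u_h,\sigma_h\cdot\n)_\elbnd-\alpha\lVert[u_h]\rVert_\elbnd^2+\beta\lVert\llbracket\sigma_h\rrbracket\rVert_\elbnd^2\Big)-\lVert\hat u_h\rVert_\bnd^2=-\Re(g,\hat u_h)_\bnd.
\]

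The hard part is exactly this real part: the volume terms $2\Re(u_h,\diver\sigma_h)_T$ and the consistency term $-2\Re(\hat u_h,\sigma_h\cdot\n)_\elbnd$ are not sign-definite, and, more troublingly, the $\beta$-stabilisation enters with a positive sign, opposite to what a coercive bound needs. To resolve this I would extract an auxiliary identity by testing \eqref{eq::weakform} a second time with $(\tau,\hat\tau_\n,v,\hat v)=(\sigma_h,\hat\sigma_{\n,h},0,0)$, whose right-hand side vanishes; this gives $\sum_{T}\big(j\w\lVert\sigma_h\rVert_T^2+(u_h,\diver\sigma_h)_T-(\hat u_h,\sigma_h\cdot\n)_\elbnd+\beta\lVert\llbracket\sigma_h\rrbracket\rVert_\elbnd^2\big)=0$, and taking its real part shows $\sum_T\big(\Re(u_h,\diver\sigma_h)_T-\Re(\hat u_h,\sigma_h\cdot\n)_\elbnd\big)=-\beta\sum_T\lVert\llbracket\sigma_h\rrbracket\rVert_\elbnd^2$. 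Substituting this quantity (note the factor two in the display above) collapses the indefinite volume and consistency contributions and converts $+\beta\lVert\llbracket\sigma_h\rrbracket\rVert_\elbnd^2$ into $-\beta\lVert\llbracket\sigma_h\rrbracket\rVert_\elbnd^2$, leaving the clean dissipation identity
\[
\sum_{T\in\mathcal{T}}\Big(\alpha\lVert[u_h]\rVert_\elbnd^2+\beta\lVert\llbracket\sigma_h\rrbracket\rVert_\elbnd^2\Big)+\lVert\hat u_h\rVert_\bnd^2=\Re(g,\hat u_h)_\bnd.
\]

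The two claimed bounds then follow by elementary inequalities. Estimating $\Re(g,\hat u_h)_\bnd\le\lVert g\rVert_\bnd\lVert\hat u_h\rVert_\bnd$ and discarding the nonnegative stabilisation terms gives $\lVert\hat u_h\rVert_\bnd^2\le\lVert g\rVert_\bnd\lVert\hat u_h\rVert_\bnd$, hence $\lVert\hat u_h\rVert_\bnd\le\lVert g\rVert_\bnd$; reinserting this into the dissipation identity yields $\Re(g,\hat u_h)_\bnd\le\lVert g\rVert_\bnd\lVert\hat u_h\rVert_\bnd\le\lVert g\rVert_\bnd^2$, which is precisely \eqref{eq::bndstabestimate}. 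Finally I would return to the imaginary-part identity and bound $|\Im(g,\hat u_h)_\bnd|\le\lVert g\rVert_\bnd\lVert\hat u_h\rVert_\bnd\le\lVert g\rVert_\bnd^2$ using the bound just obtained; rearranging gives $\sum_{T\in\mathcal{T}}\w\lVert\sigma_h\rVert_T^2\le\sum_{T\in\mathcal{T}}\w\lVert u_h\rVert_T^2+\lVert g\rVert_\bnd^2$, which is \eqref{eq::volstabestimate}.
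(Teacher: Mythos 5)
Your proof is correct and follows essentially the same route as the paper: the paper obtains your ``clean dissipation identity'' in one step by testing with $(\sigma_h,\hat\sigma_{\n,h},-u_h,-\hat u_h)$ and taking the real part, which is algebraically identical to your combination, since $(\sigma_h,\hat\sigma_{\n,h},-u_h,-\hat u_h)=2(\sigma_h,\hat\sigma_{\n,h},0,0)-(\sigma_h,\hat\sigma_{\n,h},u_h,\hat u_h)$ and $B$ is additive in the test block. The remaining steps (imaginary part of the un-negated test plus $\lVert\hat u_h\rVert_\bnd\leq\lVert g\rVert_\bnd$) coincide with the paper's argument.
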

\begin{proof}
By choosing $v_h = - u_h, \tau_h = \sigma_h, \hat v_h = - \hat u_h, \hat \tau_{n,h} = \hat \sigma_{n,h}$ as test functions $B$ has the form
\begin{align}
\begin{split}
	B(\sigma_h, \hat \sigma_{\n,h},  u_h, \hat u_h, \sigma_h, \hat \sigma_{\n,h},  -u_h, -\hat u_h) =&
	\sum_{T \in \mathcal{T}} j \w \lVert \sigma_h \rVert^2_T + 2 \Im (u_h, \diver \sigma_h)_T + j \w \lVert u_h \rVert^2_T \\
	&- 2 \Im (\hat u_h, \sigma_h \cdot \n)_\elbnd
	 + \alpha \lVert [u_h] \rVert^2_\elbnd + \beta \lVert \llbracket \sigma_h \rrbracket \rVert^2_\elbnd 
	 + \lVert \hat u_h \rVert^2_\bnd.
\end{split}
\end{align}
Taking the real part leads to
\begin{align}
	\alpha \lVert [u_h] \rVert^2_\elbnd + \beta \lVert \llbracket \sigma_h \rrbracket \rVert^2_\elbnd + \lVert \hat u_h \rVert^2_\bnd = \Re (g, \hat u_h)_\bnd
\end{align}
directly implying \eqref{eq::bndstabestimate}.
Similarly, choosing $v_h = u_h, \tau_h = \sigma_h, \hat v_h = \hat u_h, \hat \tau_{n,h} = \hat \sigma_{n,h}$ as test functions gives
\begin{align}
\begin{split}
	B(\sigma_h, \hat \sigma_{\n,h},  u_h, \hat u_h, \sigma_h, \hat \sigma_{\n,h},  u_h, \hat u_h) =&
	\sum_{T \in \mathcal{T}} j \w \lVert \sigma_h \rVert^2_T + 2 \Re (u_h, \diver \sigma_h)_T - j \w \lVert u_h \rVert^2_T \\ 
	&- 2 \Re (\hat u_h, \sigma_h \cdot \n)_\elbnd
	- \alpha \lVert [u_h] \rVert^2_\elbnd + \beta \lVert \llbracket \sigma_h \rrbracket \rVert^2_\elbnd
	- \lVert \hat u_h \rVert^2_\bnd.
\end{split}
\end{align}
Taking the imaginary part yields
\begin{align}
	\sum_{T \in \mathcal{T}} \w \lVert \sigma_h \rVert^2_T - \w \lVert u_h \rVert^2_T = - \Im (g, \hat u_h)_\bnd,
\end{align}
and with \eqref{eq::bndstabestimate}, which implies $\lVert \hat u_h \rVert_\bnd \leq \lVert g \rVert_\bnd$, gives \eqref{eq::volstabestimate}.
\end{proof}

Lemma \ref{lam::bounds} shows that $\hat u_h$ and the discrete jumps are bounded by the excitation,
and if $u_h$ can be bounded then the same follows for $\sigma_h$. 
The result highlights the favourable choice of real, positive $\alpha$ and $\beta$.
The adjoint problem in combination with an Aubin-Nitsche type technique is used to bound $u_h$.

\begin{lemma} \label{lem::trace}
For the solution $(\phi, w)$ of \eqref{eq:adjfirst} - \eqref{eq:adjboundary} there holds
\begin{align}
	\lVert w \rVert^2_\bnd = \lVert \phi \cdot \n \rVert^2_\bnd \leq C \lVert f \rVert^2_\domain,
\end{align}
with a constant $C > 0$ independent of $\w$.
\end{lemma}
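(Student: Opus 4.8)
The plan is to test the adjoint equations against the adjoint solution itself, use integration by parts to collect all volume contributions into a single boundary term, and then take the real part so that the wavenumber-dependent (indefinite) terms cancel exactly.

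First, the claimed equality $\lVert w \rVert^2_\bnd = \lVert \phi \cdot \n \rVert^2_\bnd$ is immediate from the boundary condition \eqref{eq:adjboundary}, which gives $\phi \cdot \n = -w$ pointwise on $\bnd$; taking moduli and integrating shows the two boundary norms coincide. For the estimate itself, I would multiply \eqref{eq:adjfirst} by $\conjugate{\phi}$ and \eqref{eq:adjsecond} by $\conjugate{w}$ and integrate over $\domain$, obtaining $j \w \lVert \phi \rVert^2_\domain + (\grad w, \phi)_\domain = 0$ and $(\diver \phi, w)_\domain + j \w \lVert w \rVert^2_\domain = (f, w)_\domain$. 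Applying Green's identity to $(\grad w, \phi)_\domain$ and inserting $\phi \cdot \n = -w$ produces exactly the boundary contribution $-\lVert w \rVert^2_\bnd$. Eliminating the divergence terms between the two identities then yields a single relation of the form $-\lVert w \rVert^2_\bnd + j \w \bigl( \lVert w \rVert^2_\domain - \lVert \phi \rVert^2_\domain \bigr) = (f, w)_\domain$.

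The crucial step is to take the real part of this relation. Since $\w$ and the two $\ltwo(\domain)$-norms are real, the entire middle term is purely imaginary and vanishes, leaving the clean identity $\lVert w \rVert^2_\bnd = -\Re (f, w)_\domain$. By Cauchy--Schwarz this gives $\lVert w \rVert^2_\bnd \leq \lVert f \rVert_\domain \, \lVert w \rVert_\domain$. It then remains to control $\lVert w \rVert_\domain$, which I would obtain from the $\w$-explicit regularity estimate for the adjoint problem stated above: it provides $\w \lVert w \rVert_\domain \leq C \w \lVert f \rVert_\domain$, hence $\lVert w \rVert_\domain \leq C \lVert f \rVert_\domain$ with $C$ independent of $\w$. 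Combining the two bounds closes the argument with a wavenumber-independent constant.

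I expect the only delicate point to be the sign and conjugation bookkeeping in Green's identity under the convention that the second slot of $(\cdot, \cdot)$ is complex conjugated, since a misplaced sign would spoil the positivity of the resulting boundary identity. The $\w$-independence, by contrast, is essentially automatic: taking the real part is precisely what annihilates the indefinite $j \w$ contributions, and the regularity estimate carries the only remaining wavenumber scaling with a uniform constant.
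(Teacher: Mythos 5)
Your proposal is correct and follows essentially the same route as the paper: test \eqref{eq:adjfirst} and \eqref{eq:adjsecond} with $\conjugate{\phi}$ and $\conjugate{w}$, integrate by parts, conjugate one identity so the divergence terms cancel, take the real part to kill the purely imaginary $j\w$ contributions, and close with Cauchy--Schwarz and the $\w$-explicit bound $\w \lVert w \rVert_{\ltwo(\domain)} \leq C \w \lVert f \rVert_{\ltwo(\domain)}$. The sign and conjugation bookkeeping you flag does work out, yielding $\lVert w \rVert^2_\bnd = -\Re (f,w)_\domain$ exactly as in the paper's argument.
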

\begin{proof}
The first equality immediately follows from \eqref{eq:adjboundary}.
Multiplying \eqref{eq:adjfirst} and \eqref{eq:adjsecond} by $\conjugate \phi$ and $\conjugate w$ respectively gives
\begin{align}
	j \w \lVert \phi \rVert^2_\domain + (\grad w, \phi)_\domain &= 0, \\
	j \w \lVert w \rVert^2_\domain + (\diver \phi, w)_\domain &= (f, w)_\domain.
\end{align}
Partial integration of $(\grad w, \phi)_\domain$ leads to
\begin{align}
	j \w \lVert \phi \rVert^2_\domain - (w, \diver \phi)_\domain + (w, \phi \cdot \n)_\bnd = 0.
\end{align}
By conjugating and adding the equations the terms $(\diver \phi, w)_\domain$ cancel out and
\begin{align}
	j \w \lVert w \rVert^2_\domain - j \w \lVert \phi \rVert^2_\domain + (\phi \cdot \n, w)_\bnd = (f, w)_\domain
\end{align}
remains.
Using \eqref{eq:adjboundary} and only considering the real part in combination with the adjoint regularity implies
\begin{align}
	\lVert \phi \cdot \n \rVert^2_\bnd \leq \frac{1}{\w} \lVert f \rVert_\domain \w \lVert w \rVert_\domain \leq C \lVert f \rVert^2_\domain.
\end{align}
\end{proof}

The following lemma is essential for pre-asymptotic stability. It highlights the effect of introducing $\ltwo$-projections of $\sigma$ and $u$ defined by
\begin{align}
	(\Pi \sigma, \tau_h)_T = (\sigma, \tau_h)_T && \forall \tau_h \in \mathcal{RT}^p(T), \\
	(\Pi u, v_h)_T = (u, v_h)_T && \forall v_h \in \mathcal{P}^p(T), \\
	(\Pi_F u, \hat v_h)_F = (u, \hat v_h)_F && \forall \hat v_h \in \mathcal{P}^p(F),
\end{align}
into the SLF. The proof follows the lines in \cite[Lemma 3.2]{maxwellabsolute}.

\begin{lemma} \label{lem::adjprojectproperty}
Using the $\ltwo$-projection $\Pi$ on elements and the $\ltwo$-projection $\Pi_F$ on facets, there holds for arbitrary $\tau_h, \hat \tau_{\n, h}, v_h, \hat v_h \in \mathcal{X}_h$
\begin{align}
\begin{split}
	B( \tau_h, \hat \tau_{\n,h}, v_h, \hat v_h, \phi - \Pi \phi, &\phi \cdot \n - \Pi_F \phi \cdot \n, w - \Pi w, w - \Pi_F w) \\
	= \sum_{T \in \mathcal{T}}&([v_h], (\phi - \Pi \phi) \cdot \n)_\elbnd - \alpha ([v_h], w - \Pi w)_\elbnd + \beta (\llbracket \tau_h \rrbracket, (\phi - \Pi \phi) \cdot \n)_\elbnd.
\end{split}
\end{align}
\end{lemma}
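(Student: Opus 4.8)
The plan is to insert the specific adjoint projection errors into the sesquilinear form and to simplify, one by one, the eight element contributions plus the boundary term of Definition \ref{def::hdgformulation}, using only the defining $\ltwo$-orthogonality of the three projections. Concretely, $\phi - \Pi\phi$ is $\ltwo(T)$-orthogonal to $\mathcal{RT}^p(T)$, $w - \Pi w$ is $\ltwo(T)$-orthogonal to $\mathcal{P}^p(T)$, and both $w - \Pi_F w$ and $\phi\cdot\n - \Pi_F(\phi\cdot\n)$ are $\ltwo(F)$-orthogonal to $\mathcal{P}^p(F)$ on every facet. Because the inner product conjugates its second argument, I would record at the outset the identity $(a,b)_T = \conjugate{(b,a)_T}$ (and likewise on facets), so that orthogonality can be invoked irrespective of which slot carries the projection error.

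First I would dispatch the volume terms. The three contributions $j\w(\tau_h, \phi - \Pi\phi)_T$, $(\diver\tau_h, w - \Pi w)_T$ and $-j\w(v_h, w - \Pi w)_T$ vanish directly, the first because $\tau_h\in\mathcal{RT}^p(T)$ and the latter two because $\diver\tau_h, v_h\in\mathcal{P}^p(T)$, using the inclusion $\diver\mathcal{RT}^p(T)\subseteq\mathcal{P}^p(T)$. The only surviving volume term is $(v_h, \diver(\phi - \Pi\phi))_T$; here I would integrate by parts to obtain $-(\grad v_h, \phi - \Pi\phi)_T + (v_h, (\phi - \Pi\phi)\cdot\n)_\elbnd$, and then observe that the new volume part also vanishes, since $\grad v_h \in [\mathcal{P}^{p-1}(T)]^d \subseteq \mathcal{RT}^p(T)$ is again orthogonal to $\phi - \Pi\phi$. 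This leaves exactly the boundary contribution $(v_h, (\phi - \Pi\phi)\cdot\n)_\elbnd$.

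Next I would treat the facet and stabilisation terms, now exploiting that discrete normal traces and polynomial traces lie in $\mathcal{P}^p(F)$: via $\mathcal{RT}^p(T)\cdot\n|_F\subseteq\mathcal{P}^p(F)$ one has $\tau_h\cdot\n, \hat v_h, [v_h], \llbracket\tau_h\rrbracket\in\mathcal{P}^p(F)$ on each facet. Consequently $-(\tau_h\cdot\n, w - \Pi_F w)_\elbnd$ and the boundary term $-(\hat v_h, w - \Pi_F w)_\bnd$ vanish by facet orthogonality, whereas $-(\hat v_h, (\phi - \Pi\phi)\cdot\n)_\elbnd$ survives and combines with the retained volume-trace term into $([v_h], (\phi - \Pi\phi)\cdot\n)_\elbnd$. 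For the two stabilisation terms I would compute the jumps of the test pair, namely $\Pi_F w - \Pi w$ for the scalar jump and $\Pi_F(\phi\cdot\n) - (\Pi\phi)\cdot\n$ for the flux jump, and then split each as the desired term plus a facet projection error; since $[v_h]$ and $\llbracket\tau_h\rrbracket$ are in $\mathcal{P}^p(F)$, the facet errors $w - \Pi_F w$ and $\phi\cdot\n - \Pi_F(\phi\cdot\n)$ pair to zero against them. This turns $-\alpha([v_h], \Pi_F w - \Pi w)_\elbnd$ into $-\alpha([v_h], w - \Pi w)_\elbnd$ and $\beta(\llbracket\tau_h\rrbracket, \Pi_F(\phi\cdot\n) - (\Pi\phi)\cdot\n)_\elbnd$ into $\beta(\llbracket\tau_h\rrbracket, (\phi - \Pi\phi)\cdot\n)_\elbnd$, and summing the survivors gives the asserted identity.

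I expect the stabilisation terms to be the only genuinely delicate step. The subtlety is to keep the facet projection $\Pi_F$ of the adjoint trace strictly separate from the volume projection $\Pi$, to verify that the relevant discrete jumps are degree-$p$ polynomials on each facet so that the facet error terms annihilate against them, and to track the complex conjugation accurately when moving the projection error between the two slots of the inner product. The integration by parts in the surviving volume term is elementary but should be stated explicitly, as it is precisely what generates the $(\phi - \Pi\phi)\cdot\n$ boundary data that feeds the subsequent stability estimates.
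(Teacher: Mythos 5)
Your proposal is correct and follows essentially the same route as the paper's proof: kill the three volume terms and the two $\Pi_F$-facet terms by $\ltwo$-orthogonality, integrate $(v_h,\diver(\phi-\Pi\phi))_T$ by parts so that only the trace term survives, combine it with $-(\hat v_h,(\phi-\Pi\phi)\cdot\n)_\elbnd$ into $([v_h],(\phi-\Pi\phi)\cdot\n)_\elbnd$, and reduce the two stabilisation jumps by pairing the $\Pi_F$-errors to zero against the discrete jumps in $\mathcal{P}^p(F)$. The only differences are presentational (you make the conjugation and the inclusions $\grad v_h\in[\mathcal{P}^{p-1}(T)]^d\subseteq\mathcal{RT}^p(T)$ and $\mathcal{RT}^p(T)\cdot\n|_F\subseteq\mathcal{P}^p(F)$ explicit, which the paper leaves implicit).
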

\begin{proof}
The expression on the left-hand side of the equation is
\begin{align}
\begin{split}
	B( \tau_h, \hat \tau_{\n,h}, v_h, \hat v_h,& \phi - \Pi \phi, \phi \cdot \n - \Pi_F \phi \cdot \n, w - \Pi w, w - \Pi_F w) \\
	=& \sum_{T \in \mathcal{T}} j \w (\tau_h, \phi - \Pi \phi)_T + (v_h, \diver (\phi - \Pi \phi))_T + (\diver \tau_h, w - \Pi w)_T - j \w (v_h, w - \Pi w)_T \\
	&- (\hat v_h, (\phi - \Pi \phi) \cdot \n)_\elbnd - (\tau_h \cdot \n, w - \Pi_F w)_\elbnd \\
	&- \alpha ([v_h], [w - \Pi w])_\elbnd + \beta (\llbracket \tau_h \rrbracket, \llbracket \phi - \Pi \phi \rrbracket)_\elbnd - (\hat v_h, w - \Pi_F w)_\bnd.
\end{split}
\end{align}
Due to the $\ltwo$-projection properties the terms
\begin{align}
	j \w (\tau_h, \phi - \Pi \phi)_T = 0, && (\diver \tau_h, w - \Pi w)_T = 0, && - j\w (v_h, w - \Pi w)_T = 0, \\
	- (\tau_h \cdot \n, w - \Pi_F w)_\elbnd = 0, && - (\hat v, w - \Pi_F w)_\bnd = 0
\end{align}
vanish.
The remaining volume term introduces, after partial integration, the following element boundary terms
\begin{align}
	(v_h, \diver (\phi - \Pi \phi))_T = - (\grad v_h, \phi - \Pi \phi)_T + (v_h, (\phi - \Pi \phi) \cdot \n)_\elbnd = (v_h, (\phi - \Pi \phi) \cdot \n)_\elbnd
\end{align}
and for jumps there hold
\begin{align}
	- \alpha ([v_h], [w - \Pi w])_\elbnd &= - \alpha ([v_h], w - \Pi w - w + \Pi_F w)_\elbnd =- \alpha ([v_h], w - \Pi w)_\elbnd, \\
	\beta (\llbracket \tau_h \rrbracket, \llbracket \phi - \Pi \phi \rrbracket)_\elbnd &= \beta (\llbracket \tau_h \rrbracket, (\phi - \Pi \phi) \cdot \n)_\elbnd.
\end{align}
Incorporating these changes yields
\begin{align}
\begin{split}
	B( \tau_h, \hat \tau_{\n,h}, v_h, &\hat v_h, \phi - \Pi \phi, \phi_\n - \Pi_F \phi_\n, w - \Pi w, w - \Pi_F w) \\
	= \sum_{T \in \mathcal{T}}&(v_h - \hat v_h, (\phi - \Pi \phi) \cdot \n)_\elbnd
		- \alpha ([v_h], w - \Pi w)_\elbnd + \beta (\llbracket \tau_h \rrbracket, (\phi - \Pi \phi) \cdot \n)_\elbnd.
\end{split}
\end{align}
\end{proof}

To prove the main result of this section the following standard approximation properties of $\ltwo$-projections are required.

\begin{lemma}[Approximation Properties of $\ltwo$-projections]
Assuming $H^{s+1}$-regularity of $\phi$, $H^{t+1}$-regularity of $w$, with $s,t \in \R_+$. If the polynomial degree of the discrete spaces satisfies $p \geq s \geq 0$, $p \geq t \geq 0$,
then exists a constant $C > 0$ independent of $h$ so that
\begin{align}
	\sum_{T \in \mathcal{T}} \lVert (\phi - \Pi \phi) \cdot \n \rVert_\elbnd^2 \leq C h^{2s + 1} \lVert \phi \rVert_{H^{s+1}}^2, \\
	\sum_{T \in \mathcal{T}} \lVert w - \Pi w \rVert_\elbnd^2 \leq C h^{2t + 1} \lVert w \rVert_{H^{t+1}}^2.
\end{align}
\end{lemma}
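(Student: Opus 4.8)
The plan is to reduce the element-boundary norms to interior volume norms by a scaled trace inequality and then to invoke standard $\ltwo$-projection approximation estimates on each element. First I would record, for every element $T \in \mathcal{T}$ of diameter $h_T$ and every $v \in \hone(T)$, the scaled trace inequality
\begin{align}
	\lVert v \rVert_\elbnd^2 \leq C \big( h_T^{-1} \lVert v \rVert_T^2 + h_T \lVert \grad v \rVert_T^2 \big),
\end{align}
which follows by transforming to a reference element and scaling back. Applying it componentwise to $v = \phi - \Pi \phi$ together with $\lvert (\phi - \Pi \phi) \cdot \n \rvert \leq \lvert \phi - \Pi \phi \rvert$ on $\elbnd$, and analogously to $v = w - \Pi w$, converts each left-hand side into a combination of an $\ltwo$-norm and an $\hone$-seminorm of the projection error over $T$.

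The second ingredient is the pair of local $\ltwo$-projection error bounds: for $0 \leq s \leq p$,
\begin{align}
	\lVert \phi - \Pi \phi \rVert_T \leq C h_T^{s+1} \lvert \phi \rvert_{H^{s+1}(T)}, && \lVert \grad (\phi - \Pi \phi) \rVert_T \leq C h_T^{s} \lvert \phi \rvert_{H^{s+1}(T)},
\end{align}
and the analogous bounds for $w - \Pi w$ with $t$ in place of $s$. Substituting into the trace inequality, both contributions carry the same power of $h_T$,
\begin{align}
	\lVert (\phi - \Pi \phi) \cdot \n \rVert_\elbnd^2 \leq C \big( h_T^{-1} h_T^{2(s+1)} + h_T \, h_T^{2s} \big) \lvert \phi \rvert_{H^{s+1}(T)}^2 = C h_T^{2s+1} \lvert \phi \rvert_{H^{s+1}(T)}^2 .
\end{align}
Summing over $T \in \mathcal{T}$, bounding $h_T \leq h$, and using that the broken seminorms are controlled by the global norm $\lVert \phi \rVert_{H^{s+1}(\domain)}$ yields the first estimate; the bound for $w$ follows verbatim. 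For non-integer $s, t$ the local approximation bounds are obtained by Sobolev-space interpolation between consecutive integer orders.

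The routine parts are the trace inequality and the $\ltwo$-norm approximation bound; the latter is immediate because $\mathcal{RT}^p(T) \supseteq [\mathcal{P}^p(T)]^d$, so the $\ltwo$-orthogonal projection onto $\mathcal{RT}^p(T)$ is at least as accurate in $\ltwo(T)$ as the componentwise projection onto $[\mathcal{P}^p(T)]^d$, to which the Bramble--Hilbert lemma applies. The main obstacle is the $\hone$-seminorm estimate of the projection error, since $\Pi$ is defined only variationally in $\ltwo$ and is not $\hone$-stable by construction. I would handle it by inserting an $\hone$-conforming interpolant $I_h \phi$ (for instance a Scott--Zhang-type interpolant) and splitting
\begin{align}
	\lVert \grad(\phi - \Pi \phi) \rVert_T \leq \lVert \grad(\phi - I_h \phi) \rVert_T + \lVert \grad(I_h \phi - \Pi \phi) \rVert_T ;
\end{align}
the first term is a standard interpolation estimate, while the second is the gradient of a polynomial and is bounded by the inverse inequality $\lVert \grad q_h \rVert_T \leq C h_T^{-1} \lVert q_h \rVert_T$, which reduces it to the already-controlled $\ltwo$ errors of $I_h \phi$ and $\Pi \phi$. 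A final point of care is that $\Pi$ is the plain $\ltwo$-projection onto the Raviart--Thomas space, not the canonical RT interpolant, so the $\hone$-bound cannot simply be quoted from the literature; the interpolant-plus-inverse-inequality argument, however, is insensitive to this distinction and delivers the required rate.
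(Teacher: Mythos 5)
The paper states this lemma without proof, treating it as a standard approximation property of $\ltwo$-projections, so there is no in-paper argument to compare against. Your derivation---the scaled trace inequality $\lVert v \rVert_{\elbnd}^2 \leq C (h_T^{-1}\lVert v \rVert_T^2 + h_T \lVert \grad v \rVert_T^2)$, the local $\ltwo$-projection error bounds (using $[\mathcal{P}^p(T)]^d \subset \mathcal{RT}^p(T)$ and best-approximation for the flux), and the interpolant-plus-inverse-inequality device to control the $H^1$-seminorm of the projection error, which is indeed the only non-quotable step since $\Pi$ is not $H^1$-stable by construction---is correct and is precisely the standard argument the authors are implicitly invoking.
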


In the following theorem $u_h$ will be bounded by $g$, which concludes the stability analysis of the HDG formulation.
The proof is based upon \cite[Lemma 3.2]{maxwellabsolute}.

\begin{proof}[Proof of Theorem \ref{th::stability}]
Using an Aubin-Nitsche trick for the adjoint problem, by considering as excitation $f = u_h$, yields in combination with Lemma \ref{lem::weakadjoint}
\begin{align}
\begin{split}
	\lVert u_h \rVert^2_\domain = B( \sigma_h, \hat \sigma_{\n,h}, u_h, \hat u_h, \phi, \phi_\n, w, w),
\end{split}
\end{align}
with $(\sigma_h, \hat \sigma_{\n,h}, u_h, \hat u_h)$ as test functions.
Adding and subtracting the $\ltwo$-projections $\Pi$ and $\Pi_F$ leads to
\begin{align}
\begin{split}
	\lVert u_h \rVert^2_\domain =\ & B( \sigma_h, \hat \sigma_{\n,h}, u_h, \hat u_h, \phi - \Pi \phi, \phi_\n - \Pi_F \phi_\n, w - \Pi w, w - \Pi_F w) \\
		&+ B( \sigma_h, \hat \sigma_{\n,h}, u_h, \hat u_h, \Pi \phi, \Pi_F \phi_\n, \Pi w, \Pi_F w).
\end{split}
\end{align}
The first part is replaced by Lemma \ref{lem::adjprojectproperty}
and according to \eqref{eq::weakform} there holds for the second part
\begin{align}
	B( \sigma_h, \hat \sigma_{\n,h}, u_h, \hat u_h, \Pi \phi, \Pi_F \phi_\n, \Pi w, \Pi_F w) = - (g, \Pi_F w)_\bnd,
\end{align}
because the projected adjoint solution is in the discrete space $\mathcal{X}_h$.
Cauchy-Schwarz and Lemma \ref{lam::bounds} give the estimate
\begin{align}
\begin{split}
	\lVert u_h \rVert^2_\domain
	=& \sum_{T \in \mathcal{T}} ([u_h], (\phi - \Pi \phi) \cdot \n)_\elbnd - \alpha ([u_h], w - \Pi w)_\elbnd + \beta (\llbracket \sigma_h \rrbracket, (\phi - \Pi \phi) \cdot \n)_\elbnd - (g, \Pi_F w)_\bnd \\
	\leq& \Bigg ( \sum_{T \in \mathcal{T}} \alpha \lVert  [u_h] \rVert_\elbnd^2 
		+ \beta \lVert \llbracket \sigma_h \rrbracket \rVert_\elbnd^2
		+ \lVert g \rVert_\bnd^2 \Bigg )^{\frac{1}{2}}\\
	&\Bigg ( \sum_{T \in \mathcal{T}} \frac{2}{\alpha} \lVert (\phi - \Pi \phi) \cdot \n \rVert_\elbnd^2
		+ 2\alpha \lVert w - \Pi w \rVert_\elbnd^2
		+ \beta \lVert ( \phi - \Pi \phi ) \cdot \n \rVert_\elbnd^2
		+ \lVert \Pi_F w \rVert_\bnd^2 \Bigg )^{\frac{1}{2}} \\
	\leq& \sqrt{2} \lVert g \rVert_\bnd
		\Bigg ( \sum_{T \in \mathcal{T}} \frac{2}{\alpha} \lVert (\phi - \Pi \phi) \cdot \n \rVert_\elbnd^2
		+ 2\alpha \lVert w - \Pi w \rVert_\elbnd^2
		+ \beta \lVert ( \phi - \Pi \phi ) \cdot \n \rVert_\elbnd^2
		+ \lVert \Pi_F w \rVert_\bnd^2 \Bigg )^{\frac{1}{2}},
\end{split}
\end{align}
and further applying the $\ltwo$ approximation properties and Lemma \ref{lem::trace} for the term on the domain boundary yields
\begin{align}
\begin{split}
	\lVert u_h \rVert^2_\domain 
	&\leq C \lVert g \rVert_\bnd 
		\Bigg ( \left (\frac{2}{\alpha} + \beta \right ) h^{2s+1} \lVert \phi \rVert_{H^{s+1}}^2
			+ 2 \alpha h^{2t + 1} \lVert w \rVert_{H^{t+1}}^2
			+ \lVert u_h \rVert^2_\domain \Bigg )^{\frac{1}{2}}.
\end{split}
\end{align}
Due to the $H^2$-regularity of the adjoint problem $s = 0, t = 1$ can be chosen leading to the stability constant
\begin{align}
	C(\w, h,\alpha, \beta) = C
		\Bigg ( \left (\frac{2}{\alpha} + \beta \right ) \left (1+\w \right )^2 h
			+ 2 \alpha (1 + \w)^2 \w^2 h^3
			+ 1 \Bigg )^{\frac{1}{2}}.
\end{align}
\end{proof}

\section{Error Estimates for the Pressure and Jumps} \label{sec::errorestimatu}

In this section Theorem \ref{th::faceterror} and Theorem \ref{th::scalarerror} are proven. The analysis is similar to the proof of stability in the last section and leans on \cite{maxwellabsolute}.
The continuous and discrete solutions satisfy
\begin{align}
\begin{split}
	B(\sigma, \sigma_\n, u, u, \tau_h, \hat \tau_{\n,h}, v_h, \hat v_h) &= - (g, \hat v_h)_\bnd, \\
	B(\sigma_h, \hat \sigma_{\n,h}, u_h, \hat u_h, \tau_h, \hat \tau_{\n,h}, v_h, \hat v_h) &= - (g, \hat v_h)_\bnd,
\end{split}
\end{align}
for all $(\tau_h, \hat \tau_{\n,h}, v_h, \hat v_h) \in \mathcal{X}_h$.
In the case of coercive weak formulations optimal convergence rates are proven with coercivity, Galerkin orthogonality and continuity.
The Helmholtz equation does not satisfy coercivity, therefore other techniques need to be established.
Usually, a Schatz argument is applied to derive asymptotic results, but these depend on a resolution condition.
The $\ltwo$-projections circumvent this necessity.

For the analysis, the following projected errors, contained in the discontinuous space $\mathcal{X}_h$, are needed.
\begin{definition}
\begin{align}
	e_\sigma := \Pi \sigma - \sigma_h, && e_u := \Pi u - u_h, && e_{\hat u} := \Pi_F u - \hat u_h, && e_{\hat \sigma} := \Pi_F \sigma \cdot \n - \hat \sigma_{\n,h}. \label{eq::projectederrors}
\end{align}
\end{definition}
Projections lead to a splitting of error estimates into e.g.
\begin{align}
	\lVert u - u_h \rVert_\domain = \lVert u - \Pi u + \Pi u - u_h \rVert_\domain \leq \lVert u - \Pi u \rVert_\domain + \lVert \Pi u - u_h \rVert_\domain.
\end{align}
The first part only depends on the approximation properties of the projection. 
The second part holds the advantage that $\Pi u - u_h$ is a viable choice as a discrete test function.
Therefore, if the projection has optimal approximation properties and if the projected error has an optimal convergence rate, then the discrete solution has an optimal rate as well.

For the projected errors the Galerkin orthogonality does not hold, but they satisfy the following discrete weak formulation.

\begin{lemma} \label{lem::errorweak}
The projected errors in \eqref{eq::projectederrors} satisfy the weak formulation
\begin{align}
\begin{split}
	B(e_\sigma, e_{\hat \sigma}, e_u, e_{\hat u}, \tau_h, \hat \tau_{\n,h}, &v_h, \hat v_h) \\
	=& - \sum_{T \in \mathcal{T}} (\sigma \cdot \n - \Pi \sigma \cdot \n, [v_h])_\elbnd - \alpha (u - \Pi u, [v_h])_\elbnd + \beta (\sigma \cdot \n - \Pi \sigma \cdot \n, \llbracket \tau_h \rrbracket)_\elbnd,
\end{split}
\end{align}
for all $(\tau_h, \hat \tau_{\n,h}, v_h, \hat v_h) \in \mathcal{X}_h$.
\end{lemma}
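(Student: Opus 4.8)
The plan is to derive the error equation from Galerkin orthogonality and then absorb the projections, in complete analogy to Lemma~\ref{lem::adjprojectproperty}, but with the projected quantities now placed in the first four (trial) arguments of $B$. First I would subtract the discrete weak formulation from the consistent continuous one. Since both right-hand sides equal $-(g, \hat v_h)_\bnd$, they cancel and the total error satisfies
\begin{align}
	B(\sigma - \sigma_h, \sigma \cdot \n - \hat \sigma_{\n,h}, u - u_h, u - \hat u_h, \tau_h, \hat \tau_{\n,h}, v_h, \hat v_h) = 0
\end{align}
for all $(\tau_h, \hat \tau_{\n,h}, v_h, \hat v_h) \in \mathcal{X}_h$. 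I would then split every trial component into a projection error plus a projected error, for instance $\sigma - \sigma_h = (\sigma - \Pi \sigma) + e_\sigma$ and $u - \hat u_h = (u - \Pi_F u) + e_{\hat u}$. Because $B$ is linear in its first four arguments, the orthogonality rearranges into
\begin{align}
	B(e_\sigma, e_{\hat \sigma}, e_u, e_{\hat u}, \tau_h, \hat \tau_{\n,h}, v_h, \hat v_h) = - B(\sigma - \Pi \sigma, \sigma \cdot \n - \Pi_F \sigma \cdot \n, u - \Pi u, u - \Pi_F u, \tau_h, \hat \tau_{\n,h}, v_h, \hat v_h),
\end{align}
so the claim reduces to evaluating the sesquilinear form of the projection errors explicitly.

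The second step is a term-by-term reduction using the defining orthogonalities of $\Pi$ and $\Pi_F$. The volume contributions $j\w(\sigma - \Pi \sigma, \tau_h)_T$, $(u - \Pi u, \diver \tau_h)_T$ and $-j\w(u - \Pi u, v_h)_T$ vanish because $\tau_h \in \mathcal{RT}^p(T)$ and $\diver \tau_h, v_h \in \mathcal{P}^p(T)$; the facet term $-(u - \Pi_F u, \tau_h \cdot \n)_\elbnd$ and the boundary term $-(u - \Pi_F u, \hat v_h)_\bnd$ vanish by $\Pi_F$, since the Raviart--Thomas normal trace $\tau_h \cdot \n$ and $\hat v_h$ lie in $\mathcal{P}^p(F)$. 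The only genuinely non-trivial manipulation, and the step I expect to be the crux, is the divergence term $(\diver(\sigma - \Pi \sigma), v_h)_T$: integrating by parts produces a volume part $(\sigma - \Pi \sigma, \grad v_h)_T$ which again vanishes because $\grad v_h \in \mathcal{RT}^p(T)$, leaving the element-boundary term $((\sigma - \Pi \sigma) \cdot \n, v_h)_\elbnd$; this combines with $-((\sigma - \Pi \sigma) \cdot \n, \hat v_h)_\elbnd$ into $((\sigma - \Pi \sigma) \cdot \n, [v_h])_\elbnd$.

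The remaining difficulty is the two stabilisation terms, where the jumps mix the volume and facet projections. Here I would use that $[u] = \Pi_F u - \Pi u$ and $\llbracket \sigma - \Pi \sigma \rrbracket = \Pi_F \sigma \cdot \n - \Pi \sigma \cdot \n$ on $\elbnd$, split each into a facet-projection error minus a volume-projection error, and discard the facet-projection part by testing against $[v_h], \llbracket \tau_h \rrbracket \in \mathcal{P}^p(F)$; what survives are the volume-projection contributions $\alpha(u - \Pi u, [v_h])_\elbnd$ and $\beta(\sigma \cdot \n - \Pi \sigma \cdot \n, \llbracket \tau_h \rrbracket)_\elbnd$. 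Collecting the three surviving facet terms and carrying through the overall sign from the Galerkin orthogonality gives the asserted identity; as these right-hand-side terms are later bounded by Cauchy--Schwarz, the precise signs are immaterial to the subsequent error estimates.
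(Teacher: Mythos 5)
Your argument is correct and is essentially the paper's own proof: Galerkin orthogonality, the defining orthogonalities of $\Pi$ and $\Pi_F$ to kill the volume, normal-trace and boundary terms, the integration by parts of $(\diver(\sigma-\Pi\sigma),v_h)_T$ to produce the $((\sigma-\Pi\sigma)\cdot\n,[v_h])_\elbnd$ contribution, and the replacement of $\Pi_F$ by the exact traces in the stabilisation terms since $[v_h],\llbracket\tau_h\rrbracket\in\mathcal{P}^p(F)$. The only difference is organisational — you subtract the two formulations first and evaluate $B$ on the projection errors, whereas the paper inserts the projections into the continuous form before subtracting — which changes nothing of substance.
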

\begin{proof}
By testing with discrete functions, continuous variables can be replaced with their $\Pi_F$-projections on facets. 
The volume projection $\Pi$ can be inserted into most parts, except for
\begin{align}
\begin{split}
	\sum_{T \in \mathcal{T}} (\diver \sigma, v_h)_T 
	&= \sum_{T \in \mathcal{T}} - (\sigma, \grad v_h)_T + (\Pi_F \sigma \cdot \n, v_h)_\elbnd \\
	&= \sum_{T \in \mathcal{T}} - (\Pi \sigma, \grad v_h)_T + (\Pi_F \sigma \cdot \n, v_h)_\elbnd \\
	&= \sum_{T \in \mathcal{T}} (\diver \Pi \sigma, v_h)_T + (\Pi_F \sigma \cdot \n - \Pi \sigma \cdot \n, v_h)_\elbnd,
\end{split}
\end{align}
which results in boundary terms.
Taking them into account yields
\begin{align}
\begin{split}
	B(\sigma,\ \sigma_\n, &u, u, \tau_h, \hat \tau_{\n,h}, v_h, \hat v_h) \\
	=& \sum_{T \in \mathcal{T}} j \w (\Pi \sigma, \tau_h)_T + (\Pi u, \diver \tau_h)_T + (\diver \Pi \sigma, v_h)_T - j \w (\Pi u, v_h)_T + (\Pi_F \sigma \cdot \n - \Pi \sigma \cdot \n, v_h)_\elbnd \\
	&- (\Pi_F u, \tau_h \cdot \n)_\elbnd - (\Pi_F \sigma \cdot \n, \hat v_h)_\elbnd - \alpha (\Pi_F [u], [v_h])_\elbnd 
		+ \beta (\Pi_F \llbracket \sigma \rrbracket, \llbracket \tau_h \rrbracket)_\elbnd - (\Pi_F u, \hat v_h)_\bnd,
\end{split}
\end{align}
which leads to
\begin{align}
\begin{split}
	0 =\ & B(\sigma, \sigma_\n, u, u, \tau_h, \hat \tau_{\n,h}, v_h, \hat v_h) - B(\sigma_h, \hat \sigma_{\n,h}, u_h, \hat u_h, \tau_h, \hat \tau_{\n,h}, v_h, \hat v_h) \\
	=\ & \sum_{T \in \mathcal{T}} j \w (e_\sigma, \tau_h)_T + (e_u, \diver \tau_h)_T + (\diver e_\sigma, v_h)_T - j \w (e_u, v_h)_T \\
	&+ (\Pi_F \sigma \cdot \n - \Pi \sigma \cdot \n, v_h)_\elbnd
	- (e_{\hat u}, \tau_h \cdot \n)_\elbnd - (\Pi_F \sigma \cdot \n - \sigma_h \cdot \n, \hat v_h)_\elbnd \\
	&- \alpha (\Pi_F [u] - [u_h], [v_h])_\elbnd + \beta (\Pi_F \llbracket \sigma \rrbracket - \llbracket \sigma_h \rrbracket, \llbracket \tau_h \rrbracket)_\elbnd - (e_{\hat u}, \hat v_h)_\bnd.
\end{split}
\end{align}
There hold the following identities on element boundaries:
\begin{align}
\begin{split}
	\Pi_F \sigma \cdot \n - \sigma_h \cdot \n &= \Pi_F \sigma \cdot \n - \Pi \sigma \cdot \n + \Pi \sigma \cdot \n - \sigma_h \cdot \n \\
	&= e_\sigma \cdot \n - (\Pi \sigma \cdot \n - \Pi_F \sigma \cdot \n),
\end{split} \\
\begin{split}
	\Pi_F u - \Pi_F u - u_h + \hat u_h &= \Pi u - u_h - \Pi_F u + \hat u_h - \Pi u + \Pi_F u \\
	&= e_u - e_{\hat u} - (\Pi u - \Pi_F u),
\end{split} \\
\begin{split}
	\Pi_F \sigma \cdot \n - \Pi_F \sigma \cdot \n - \sigma_h \cdot \n + \hat \sigma_{\n, h} &= \Pi \sigma \cdot \n - \sigma_h \cdot \n - \Pi_F \sigma \cdot \n + \hat \sigma_{\n, h} - (\Pi \sigma \cdot \n - \Pi_F \sigma \cdot \n) \\
	&= e_\sigma \cdot \n - e_{\hat \sigma} - (\Pi \sigma \cdot \n - \Pi_F \sigma \cdot \n).
\end{split}
\end{align}
Replacing the appropriate terms results in
\begin{align}
\begin{split}
	0 =\ & \sum_{T \in \mathcal{T}} j \w (e_\sigma, \tau_h)_T + (e_u, \diver \tau_h)_T + (\diver e_\sigma, v_h)_T - j \w (e_u, v_h)_T \\
	&+ (\Pi_F \sigma \cdot \n - \Pi \sigma \cdot \n, v_h)_\elbnd
	- (e_{\hat u}, \tau_h \cdot \n)_\elbnd - (e_\sigma \cdot \n - (\Pi \sigma \cdot \n - \Pi_F \sigma \cdot \n), \hat v_h)_\elbnd \\
	&- \alpha (e_u - e_{\hat u} - (\Pi u - \Pi_F u), [v_h])_\elbnd + \beta (e_\sigma \cdot \n - e_{\hat \sigma} - (\Pi \sigma \cdot \n - \Pi_F \sigma \cdot \n), \llbracket \tau_h \rrbracket)_\elbnd - (e_{\hat u}, \hat v_h)_\bnd \\
	=\ & B(e_\sigma, e_{\hat \sigma}, e_u, e_{\hat u}, \tau_h, \hat \tau_{\n,h}, v_h, \hat v_h) \\
	&+ \sum_{T \in \mathcal{T}} - (\Pi \sigma \cdot \n - \Pi_F \sigma \cdot \n, [v_h])_\elbnd + \alpha (\Pi u - \Pi_F u, [v_h])_\elbnd - \beta (\Pi \sigma \cdot \n - \Pi_F \sigma \cdot \n, \llbracket \tau_h \rrbracket)_\elbnd,
\end{split}
\end{align}
which concludes the proof.
\end{proof}

With the previous lemma Theorem \ref{th::faceterror} can be shown.

\begin{proof}[Proof of Theorem \ref{th::faceterror}]
The proof is similar to \cite[Lemma 4.2]{maxwellabsolute}.
Setting $\tau_h := e_\sigma, \hat \tau_{\n, h} := e_{\hat \sigma}, v_h := - e_u, \hat v_h := - e_{\hat u}$ in Lemma \ref{lem::errorweak} yields
\begin{align}
\begin{split}
	&B(e_\sigma, e_{\hat \sigma}, e_u, e_{\hat u}, e_\sigma, e_{\hat \sigma}, -e_u, -e_{\hat u}) \\
	&=\sum_{T \in \mathcal{T}} j \w \lVert e_\sigma \rVert_T^2 + 2 \Im (e_u, \diver e_\sigma)_T + j \w \lVert e_u \rVert_T^2 - 2 \Im (e_{\hat u}, e_\sigma \cdot \n)_\elbnd
	+ \alpha \lVert [e_u] \rVert_\elbnd^2 + \beta \Vert \llbracket e_\sigma \rrbracket \rVert_\elbnd^2 + \lVert e_{\hat u} \rVert_\bnd^2 \\
	&= \sum_{T \in \mathcal{T}} - (\Pi \sigma \cdot \n - \sigma \cdot \n, [e_u])_\elbnd + \alpha (\Pi u - u, [e_u])_\elbnd + \beta (\Pi \sigma \cdot \n - \sigma \cdot \n, \llbracket e_\sigma \rrbracket)_\elbnd
\end{split}
\end{align}
and considering the real part gives
\begin{align}
\begin{split}
	\sum_{T \in \mathcal{T}} \alpha \lVert [e_u] \rVert_\elbnd^2 &+ \beta \Vert \llbracket e_\sigma \rrbracket \rVert_\elbnd^2 + \lVert e_{\hat u} \rVert_\bnd^2 \\
	&= \Re \sum_{T \in \mathcal{T}} - (\Pi \sigma \cdot \n - \sigma \cdot \n, [e_u])_\elbnd + \alpha (\Pi u - u, [e_u])_\elbnd + \beta (\Pi \sigma \cdot \n - \sigma \cdot \n, \llbracket e_\sigma \rrbracket)_\elbnd.
\end{split}
\end{align}
The right-hand side can be estimated by
\begin{align}
\begin{split}
	&\Bigg \vert \sum_{T \in \mathcal{T}} - (\Pi \sigma \cdot \n - \sigma \cdot \n, [e_u])_\elbnd + \alpha (\Pi u - u, [e_u])_\elbnd + \beta (\Pi \sigma \cdot \n - \sigma \cdot \n, \llbracket e_\sigma \rrbracket)_\elbnd \Bigg \vert \\
	&\leq \Bigg ( \sum_{T \in \mathcal{T}} \frac{2}{\alpha} \lVert \Pi \sigma \cdot \n - \sigma \cdot \n \rVert_\elbnd^2 + 2 \alpha \lVert \Pi u - u \rVert_\elbnd^2 + \beta \lVert \Pi \sigma \cdot \n - \sigma \cdot \n \rVert_\elbnd^2 \Bigg )^{\frac{1}{2}}
	\Bigg ( \sum_{T \in \mathcal{T}} \alpha \lVert [e_u] \rVert_\elbnd^2 + \beta \lVert \llbracket e_\sigma \rrbracket \rVert_\elbnd^2 \Bigg )^{\frac{1}{2}}
\end{split}
\end{align}
and due to the projection properties, there holds
\begin{align}
\begin{split}
	\Bigg ( \sum_{T \in \mathcal{T}} \frac{2}{\alpha} \lVert \Pi \sigma \cdot \n - \sigma \cdot \n \rVert_\elbnd^2 + 2 \alpha \lVert \Pi u - u \rVert_\elbnd^2 + \beta \lVert \Pi \sigma \cdot \n - \sigma \cdot \n \rVert_\elbnd^2 \Bigg )^{\frac{1}{2}} \\
	\leq C \Bigg ( \left (\frac{2}{\alpha} + \beta \right ) h^{2s + 1} \lVert \sigma \rVert_{s+1}^2 + 2 \alpha h^{2t + 1} \lVert u \rVert_{t+1}^2 \Bigg )^{\frac{1}{2}},
\end{split}
\end{align}
which concludes the proof.
\end{proof}

The previous result states optimal convergence rates for jumps.
In the following step Theorem \ref{th::scalarerror} is proven, similarly to \cite[Lemma 4.3]{maxwellabsolute}, by applying an Aubin-Nitsche trick with the right-hand side $f = e_u$.

\begin{proof}[Proof of Theorem \ref{th::scalarerror}]
Using an Aubin-Nitsche technique for the adjoint problem and inserting projections $\Pi$ and $\Pi_F$, as well as applying Lemma \ref{lem::adjprojectproperty} and Lemma \ref{lem::errorweak}, yields
\begin{align}
\begin{split}
	\lVert e_u \rVert^2_\domain 
	= &B(e_\sigma, e_{\hat \sigma}, e_u, e_{\hat u}, \phi, \phi_\n, w, w)\\
	= &B(e_\sigma, e_{\hat \sigma}, e_u, e_{\hat u}, \phi - \Pi \phi, \phi_\n - \Pi_F \phi_\n, w - \Pi w, w - \Pi_F w) \\
		&+ B(e_\sigma, e_{\hat \sigma}, e_u, e_{\hat u}, \Pi \phi, \Pi_F \phi_\n, \Pi w, \Pi_F w) \\
	=& \sum_{T \in \mathcal{T}} ([e_u], (\phi - \Pi \phi) \cdot \n)_\elbnd - \alpha ([e_u], w - \Pi w)_\elbnd + \beta (\llbracket e_\sigma \rrbracket, (\phi - \Pi \phi) \cdot \n)_\elbnd \\
		&+ \sum_{T \in \mathcal{T}} (\Pi \sigma \cdot \n - \sigma \cdot \n, [\Pi w])_\elbnd - \alpha (\Pi u - u, [\Pi w])_\elbnd + \beta (\Pi \sigma \cdot \n - \sigma \cdot \n, \llbracket \Pi \phi \rrbracket)_\elbnd.\end{split}
\end{align}
Estimating the terms leads to
\begin{align}
\begin{split}
	\lVert &e_u \rVert^2_\domain \\
	\leq &\Bigg ( \sum_{T \in \mathcal{T}} \alpha \lVert [e_u] \rVert_\elbnd^2 + \beta \lVert \llbracket e_\sigma \rrbracket \rVert_\elbnd^2 \Bigg )^{\frac{1}{2}}
	\Bigg ( \sum_{T \in \mathcal{T}} \frac{2}{\alpha} \lVert (\phi - \Pi \phi) \cdot \n \rVert_\elbnd^2
		+ 2\alpha \lVert w - \Pi w \rVert_\elbnd^2 + \beta \lVert (\phi - \Pi \phi)\cdot \n \rVert_\elbnd^2 \Bigg )^{\frac{1}{2}} \\
	+& \Bigg ( \sum_{T \in \mathcal{T}} \frac{2}{\alpha} \lVert (\sigma - \Pi \sigma) \cdot \n \rVert_\elbnd^2
		+ 2\alpha \lVert u - \Pi u \rVert_\elbnd^2 + \beta \lVert (\sigma - \Pi \sigma)\cdot \n \rVert_\elbnd^2 \Bigg )^{\frac{1}{2}}
	\Bigg ( \sum_{T \in \mathcal{T}} \alpha \lVert [\Pi w] \rVert_\elbnd^2 + \beta \lVert \llbracket \Pi \phi \rrbracket \rVert_\elbnd^2 \Bigg )^{\frac{1}{2}} \\
	\leq & C \Bigg ( \left (\frac{2}{\alpha} + \beta \right ) h^{2s + 1} \lVert \sigma \rVert_{s+1}^2 + 2 \alpha h^{2t + 1} \lVert u \rVert_{t+1}^2 \Bigg )^{\frac{1}{2}}
	\Bigg ( \left (\frac{2}{\alpha} + \beta \right ) h^{2p+1} \lVert \phi \rVert_{p+1}^2 + 2 \alpha h^{2 q + 1} \lVert w \rVert_{q+1}^2 \Bigg )^{\frac{1}{2}}.
\end{split}
\end{align}
The $H^2$ regularity of the adjoint problem implies $p=0, q = 1$ giving
\begin{align}
	\lVert e_u \rVert_\domain
	\leq C \left ( \left (\frac{2}{\alpha} + \beta \right ) \left ( 1 + \w \right )^2 h + 2 \alpha (1 + \w)^2 \w^2 h^{3} \right)^{\frac{1}{2}}
		\left ( \left (\frac{2}{\alpha} + \beta \right )h^{2s + 1} \lVert \sigma \rVert_{s+1}^2 + 2 \alpha h^{2t + 1} \lVert u \rVert_{t+1}^2 \right )^{\frac{1}{2}},
\end{align}
which concludes the proof.
\end{proof}
The applied Aubin-Nitsche technique is commonly used to prove the superconvergence of the projected error $e_u$.
This is not the case for the presented HDG formulation, because the additional rate, generated due to the reasoning of the adjoint regularity, 
compensates for the otherwise sub-optimal rate through estimating boundary by volume terms.

\section{Error Estimate for the Flux} \label{sec::sigmabestapprox}

To prove optimal convergence rates for the flux the issue due to $h$-independent stabilisation needs to be overcome.
In \cite{cockburn2010projection} a method based upon a specially devised projection is established and in \cite{griesmaier2011error}
this method is applied to an HDG formulation of the Helmholtz equation to asymptotically show optimal rates. 
Removing the second facet variable $\hat \sigma_\n$ and the $\beta$-stabilisation in \eqref{eq::weakform} would lead to the formulation therein.
Therefore, similar steps as in \cite{griesmaier2011error} lead to the desired result, but they differ due to the $\beta$-stabilisation. 
For $\beta = 0$ the projection introduced in this work falls back to the projection in \cite{cockburn2010projection}.

To establish the needed projection it is favourable to eliminate the facet variables $\hat \sigma_{\n}$ and $\hat u$ which results in an equivalent DG formulation.
To accommodate the elimination of facet variables DG-jumps will be defined.
\begin{lemma} \label{lem::facetellim}
The discrete facet variables $\hat \sigma_{\n,h}$ and $\hat u_h$ of the solution of \eqref{eq::weakform} are uniquely defined by
\begin{subequations}
\begin{align}
	\hat \sigma_{\n_{\pm}, h} &= \{ \sigma_h \}_{\n_\pm} := \frac{1}{2} (\sigma_{h,+} + \sigma_{h,-}) \cdot \n_\pm && \text{ on } F_{\rm I}, \label{eq::sigmafacet} \\
	\hat \sigma_{\n,h} &= \sigma_h \cdot \n, && \text{ on } F_{\rm O} \label{eq::sigmafacetbnd}
\end{align}
\end{subequations}
and
\begin{subequations}
\begin{align}
	\hat u_h &= \{ u_h \} - \frac{1}{2 \alpha} [\sigma_h]_\n := \frac{1}{2}(u_{h,+} + u_{h,-}) - \frac{1}{2 \alpha} (\sigma_{h,+} \cdot \n_+ + \sigma_{h,-} \cdot \n_-) && \text{ on } F_{\rm I}, \label{eq::ufacet} \\
	\hat u_h &= \frac{1}{1 + \alpha} ( \Pi_F g - \sigma_h \cdot \n + \alpha u_h) && \text{ on } F_{\rm O}. \label{eq::ufacetbnd}
\end{align}
\end{subequations}
The same holds for the continuous facet variables $\hat \sigma_{\n_\pm}$ and $\hat u$.
\end{lemma}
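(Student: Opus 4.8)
The plan is to extract the facet equations from the HDG formulation \eqref{eq::weakform} by testing with functions supported only on the facet components, i.e.\ choosing $\tau_h = 0$ and $v_h = 0$ while leaving $\hat\tau_{\n,h}$ and $\hat v_h$ arbitrary. Since $\tau_h$ and $v_h$ vanish, every volume term and every term pairing a volume test function against a facet quantity drops out, and the sesquilinear form collapses to the facet couplings. Because $\hat\tau_{\n,h}$ and $\hat v_h$ can be chosen independently, the resulting identity splits into two separate equations: the one obtained by setting $\hat v_h = 0$ will determine $\hat\sigma_{\n,h}$, and the one obtained by setting $\hat\tau_{\n,h}=0$ will determine $\hat u_h$.

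For the $\hat\sigma_{\n,h}$ equation only the $\beta$-stabilisation survives, giving $\sum_{T\in\mathcal{T}} \beta(\sigma_h\cdot\n - \hat\sigma_{\n,h}, \hat\tau_{\n,h})_\elbnd = 0$. I would then rewrite the element-boundary sum $\sum_{T}(\cdot)_\elbnd$ as a sum over facets. On an interior facet $F_{\rm I}$ shared by $T_\pm$ I use that both $\hat\sigma_\n$ and the test function $\hat\tau_\n$ are scalar facet variables whose sign flips with the reference orientation, $\hat\sigma_{\n_-} = -\hat\sigma_{\n_+}$ and likewise for $\hat\tau_\n$, while $\n_- = -\n_+$; collecting the two element contributions yields $\beta((\sigma_{h,+}+\sigma_{h,-})\cdot\n - 2\hat\sigma_{\n,h}, \hat\tau_{\n,h})_{F_{\rm I}} = 0$, whereas on $F_{\rm O}$ only one element contributes. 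Since $\beta>0$, the normal trace $\sigma_h\cdot\n$ of a Raviart--Thomas field already lies in $\mathcal{P}^p(F)$, and the identity holds for every $\hat\tau_{\n,h}\in\mathcal{P}^p(F)$, the facet solve is exact and recovers precisely \eqref{eq::sigmafacet} and \eqref{eq::sigmafacetbnd}.

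For the $\hat u_h$ equation, setting $\hat\tau_{\n,h}=0$ leaves $\sum_{T\in\mathcal{T}}\big(-(\sigma_h\cdot\n,\hat v_h)_\elbnd + \alpha(u_h-\hat u_h,\hat v_h)_\elbnd\big) - (\hat u_h,\hat v_h)_\bnd = -(g,\hat v_h)_\bnd$. On an interior facet the scalar trace $\hat u_h$ carries no orientation sign, so summing the two element contributions produces the facet quantity $-(\sigma_{h,+}-\sigma_{h,-})\cdot\n + \alpha(u_{h,+}+u_{h,-}-2\hat u_h)$ tested against $\hat v_h$; solving for $\hat u_h$ and recognising $(\sigma_{h,+}-\sigma_{h,-})\cdot\n = [\sigma_h]_\n$ gives \eqref{eq::ufacet}. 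On a boundary facet the extra terms $-(\hat u_h,\hat v_h)_\bnd$ and the data term appear; collecting the coefficient of $\hat u_h$ produces $(1+\alpha)$, and because $g$ is only in $\ltwo(\bnd)$ it enters through its facet projection $\Pi_F g$, yielding \eqref{eq::ufacetbnd}. Uniqueness in each case follows since $\alpha,\beta>0$ make the facet coefficients $2\beta$, $2\alpha$ and $1+\alpha$ invertible. Finally, the identical test-function argument applies verbatim to the continuous solution, which satisfies the same weak form; there the averages and jumps reduce to the single-valued traces by normal continuity of $\sigma$ and continuity of $u$, so the same formulas hold.

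The main obstacle I anticipate is the careful sign and orientation bookkeeping when converting the element-boundary sums into facet sums: one must consistently track that $\hat\sigma_\n$ and $\hat\tau_\n$ change sign between adjacent elements while the scalar traces $\hat u_h$ and $\hat v_h$ do not, and that $\n_- = -\n_+$. A secondary point requiring attention is that the volume-derived quantities ($\sigma_h\cdot\n$ and the facet traces of $u_h$) are already polynomials of degree at most $p$, so the facet solves incur no projection error there, whereas the boundary datum $g$ must be replaced by $\Pi_F g$.
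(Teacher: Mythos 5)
Your proposal is correct and follows essentially the same route as the paper: test \eqref{eq::weakform} with $\tau_h = 0$, $v_h = 0$, isolate the $\hat\tau_{\n,h}$- and $\hat v_h$-equations, combine the two element contributions on each interior facet using the sign conventions for $\hat\sigma_\n$, $\hat\tau_\n$ and $\n_\pm$, and solve the resulting pointwise (in $\mathcal{P}^p(F)$) facet equations, with the boundary datum entering through $\Pi_F g$. Your added remarks on the exactness of the facet solves (normal traces of $\mathcal{RT}^p$ fields lying in $\mathcal{P}^p(F)$) and on the reduction to single-valued traces in the continuous case are consistent with, and slightly more explicit than, the paper's argument.
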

\begin{proof}
In the proof, the $h$-subscript for discrete variables is neglected.
Combining $\hat \tau_{\n}$ terms on an inner facet $F_{\rm I}$ leads to 
\begin{align}
	- \beta(\llbracket \sigma \rrbracket_+, \hat \tau_{\n_+})_F - \beta(\llbracket \sigma \rrbracket_-, \hat \tau_{\n_-})_F =
		&- \beta (\sigma_+ \cdot \n_+ + \sigma_- \cdot \n_+ - 2 \hat \sigma_{\n_+}, \hat \tau_{\n_+})_F,
\end{align}
which directly implies \eqref{eq::sigmafacet} and on outer facets $F_{\rm O}$ there holds
\begin{align}
	- \beta (\llbracket \sigma \rrbracket, \hat \tau_\n)_\bnd = 0
\end{align}
giving \eqref{eq::sigmafacetbnd}.
Similarly, for the scalar variable holds on inner facets
\begin{align}
	- ((\sigma_+ - \sigma_-) \cdot \n_+ - \alpha (u_+ + u_- - 2 \hat u), \hat v)_F = 0
\end{align}
yielding \eqref{eq::ufacet}.
Finally, the scalar variable on the domain boundary satisfies
\begin{align}
	-(\sigma \cdot \n, \hat v)_\bnd + \alpha (u - \hat u, \hat v)_\bnd - (\hat u, \hat v)_\bnd = - (g, \hat v)_\bnd
\end{align}
implying
\begin{align}
	(1+\alpha) \hat u = \Pi_F g - \sigma \cdot \n + \alpha u.
\end{align}
\end{proof}

With the previous lemma the facet variables can be eliminated from the HDG formulation and the volume variables satisfy the following DG formulation.

\begin{lemma}[DG Formulation]
For given $g \in \ltwo(\bnd)$ and $\w > 0$ find $\sigma \in \hdiv$, $u \in \ltwo(\domain)$, such that
\begin{align}
\begin{split}
	B_{DG} (\sigma, u, \tau, v) = \frac{1}{1+\alpha} (g, \tau \cdot \n - \alpha v)_\bnd
\end{split}
\end{align}
holds for all $\tau \in \hdiv$, $v \in \ltwo(\domain)$
with $\alpha := 1$, $\beta := 1$ and the SLF
\begin{align}
\begin{split}
	B_{DG} (\sigma, u, \tau, v) := &\sum_{T \in \mathcal{T}} j \w (\sigma, \tau)_T + (u, \diver \tau)_T + (\diver \sigma, v)_T - j \w (u, v)_T \\
	 + & \sum_{F_{\rm I} \in \mathcal{F}} - (\{u\}, [\tau]_\n )_{F_{\rm I}}
	- ([\sigma]_\n, \{v\})_{F_{\rm I}} - \frac{\alpha}{2} ([u]_+, [v]_+)_{F_{\rm I}}
	+ \left ( \frac{1}{2 \alpha} + \frac{\beta}{2} \right ) ([\sigma]_\n, [\tau]_\n)_{F_{\rm I}} \\
	+ & \ \frac{1}{1+\alpha} \left ( (\sigma \cdot \n, \tau \cdot \n)_\bnd - \alpha \left ( (u, \tau \cdot \n)_\bnd + (\sigma \cdot \n, v)_\bnd + (u,v)_\bnd \right ) \right ).
\end{split}
\end{align}
The scalar jump on inner facets is defined as $[u]_+ := u_+ - u_-$.
\end{lemma}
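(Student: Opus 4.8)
The plan is to insert the closed-form facet expressions of Lemma~\ref{lem::facetellim} directly into the sesquilinear form of \eqref{eq::weakform}, while simultaneously taking as facet \emph{test} functions the analogous consistent reconstructions, namely $\hat\tau_{\n}=\{\tau\}_{\n}$, $\hat v=\{v\}-\tfrac{1}{2\alpha}[\tau]_{\n}$ on $F_{\rm I}$ and $\hat\tau_{\n}=\tau\cdot\n$, $\hat v=\tfrac{1}{1+\alpha}(-\tau\cdot\n+\alpha v)$ on $F_{\rm O}$ (the homogeneous counterparts of \eqref{eq::sigmafacet}--\eqref{eq::ufacetbnd}, obtained by the identical local computation with $g=0$). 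Since the discrete solution satisfies \eqref{eq::weakform} for every tuple in $\mathcal{X}_h$, it satisfies it in particular for this reconstructed family, which is parametrised by arbitrary $(\tau,v)$; substituting the solution's own facet values via Lemma~\ref{lem::facetellim} then leaves a relation in the volume variables alone. The volume integrals are untouched and already coincide with those of $B_{DG}$, so all of the work is in the element-boundary terms, treated separately on inner and outer facets.

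First I would collect the inner-facet contributions by summing the two adjacent element traces. Using $\hat\sigma_{\n_{\pm}}=\{\sigma\}_{\n_\pm}$ gives $\llbracket\sigma\rrbracket_{\pm}=\tfrac12[\sigma]_{\n}$ on both sides, so the stabilisation term collapses to $\tfrac{\beta}{2}([\sigma]_{\n},[\tau]_{\n})_{F_{\rm I}}$. Substituting $\hat u=\{u\}-\tfrac{1}{2\alpha}[\sigma]_{\n}$ and its test analogue, the averages in $-(\hat u,\tau\cdot\n)_{\elbnd}$ and $-(\sigma\cdot\n,\hat v)_{\elbnd}$ produce the average--jump terms $-(\{u\},[\tau]_{\n})_{F_{\rm I}}-([\sigma]_{\n},\{v\})_{F_{\rm I}}$, while the penalty $-\alpha([u],[v])_{\elbnd}$ splits---after writing $u_{\pm}-\hat u=\pm\tfrac12[u]_{+}+\tfrac{1}{2\alpha}[\sigma]_{\n}$---into the genuine scalar-jump penalty $-\tfrac{\alpha}{2}([u]_{+},[v]_{+})_{F_{\rm I}}$ plus a further multiple of $([\sigma]_{\n},[\tau]_{\n})_{F_{\rm I}}$, the mixed $\pm$ cross terms cancelling by symmetry. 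Collecting the four contributions to $([\sigma]_{\n},[\tau]_{\n})_{F_{\rm I}}$ (two copies of $\tfrac{1}{2\alpha}$ from the averages, $-\tfrac{1}{2\alpha}$ from the penalty, and $\tfrac{\beta}{2}$ from the stabilisation) reproduces exactly the coefficient $\tfrac{1}{2\alpha}+\tfrac{\beta}{2}$ appearing in $B_{DG}$.

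On the boundary facets $F_{\rm O}$ the identity $\hat\sigma_{\n}=\sigma\cdot\n$ annihilates the jump $\llbracket\sigma\rrbracket$ and hence the $\beta$-term. Feeding $\hat u=\tfrac{1}{1+\alpha}(\Pi_F g-\sigma\cdot\n+\alpha u)$ and $\hat v=\tfrac{1}{1+\alpha}(-\tau\cdot\n+\alpha v)$ into $-(\hat u,\tau\cdot\n)_{\bnd}-(\sigma\cdot\n,\hat v)_{\bnd}-\alpha(u-\hat u,v-\hat v)_{\bnd}-(\hat u,\hat v)_{\bnd}$ and repeatedly using $1-\tfrac{\alpha}{1+\alpha}=\tfrac{1}{1+\alpha}$ (so that in the homogeneous part $u-\hat u=\tfrac{1}{1+\alpha}(u+\sigma\cdot\n)$) yields, after grouping the four bilinear types, precisely the claimed boundary form $\tfrac{1}{1+\alpha}\big((\sigma\cdot\n,\tau\cdot\n)_{\bnd}-\alpha((u,\tau\cdot\n)_{\bnd}+(\sigma\cdot\n,v)_{\bnd}+(u,v)_{\bnd})\big)$; each coefficient check reduces to elementary algebra with the factor $\tfrac{1}{1+\alpha}$. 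The step I expect to be the main obstacle is the careful bookkeeping of the data term $\Pi_F g$ carried inside $\hat u$: its contributions are spread across the three boundary terms, and one must verify that, because $\tau\cdot\n$ and the trace of $v$ lie in $\mathcal{P}^p(F)$ so that $\Pi_F$ may be dropped against them, these coefficients sum to zero. Once this cancellation is established, the only surviving data contribution is the original right-hand side $-(g,\hat v)_{\bnd}=\tfrac{1}{1+\alpha}(g,\tau\cdot\n-\alpha v)_{\bnd}$, which is exactly the load of the DG formulation, completing the identification $B_{DG}(\sigma,u,\tau,v)=\tfrac{1}{1+\alpha}(g,\tau\cdot\n-\alpha v)_{\bnd}$.
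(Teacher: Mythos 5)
Your proposal is correct and follows essentially the same route as the paper: eliminate the facet unknowns via Lemma~\ref{lem::facetellim}, regroup the inner-facet traces into averages and jumps (with the coefficient $\tfrac{1}{2\alpha}+\tfrac{\beta}{2}$ assembled exactly as you describe), and carry out the boundary algebra with the factor $\tfrac{1}{1+\alpha}$. The only difference is that you pair the solution with consistently reconstructed facet \emph{test} functions whereas the paper effectively takes $\hat\tau_\n=\hat v=0$; since the facet-test terms reproduce $-(g,\hat v)_\bnd$ identically for any choice, this is immaterial and merely adds the extra (correctly handled) bookkeeping of cancelling the $\Pi_F g$ contributions.
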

\begin{proof}
With Lemma \ref{lem::facetellim} the facet variables on inner facets are eliminated. Terms on adjacent elements are combined into DG form:
\begin{align}
	\beta (\llbracket \sigma \rrbracket_+, \tau_+ \cdot \n_+)_{F_{\rm I}} + \beta (\llbracket \sigma \rrbracket_-, \tau_- \cdot \n_-)_{F_{\rm I}} &= \frac{\beta}{2} ( [\sigma]_\n, [\tau]_\n)_{F_{\rm I}}, \\
	- (\hat u, [\tau]_\n)_{F_{\rm I}} &= - ( \{ u \} - \frac{1}{2 \alpha} [\sigma]_\n, [\tau]_\n )_{F_{\rm I}}, \\
	- \alpha ([u]_+, v_+)_F - \alpha ([u]_-, v_-)_{F_{\rm I}} &= - ( [\sigma]_\n, \{ v \} )_{F_{\rm I}} - \frac{\alpha}{2} ([u]_+, [v]_+)_{F_{\rm I}}.
\end{align}
Replacing $\hat u$ on the domain boundary leads to
\begin{align}
\begin{split}
	-(\hat u, \tau \cdot \n)_\bnd - \alpha (u - \hat u, v)_\bnd
	= &- \left (\frac{1}{1 + \alpha} ( g - \sigma \cdot \n + \alpha u), \tau \cdot \n \right )_\bnd \\
		&- \alpha \left (u - \frac{1}{1 + \alpha} ( g - \sigma \cdot \n + \alpha u), v \right)_\bnd \\
	= & \frac{1}{1 + \alpha} (\sigma \cdot \n, \tau \cdot \n)_\bnd - \frac{\alpha}{1 + \alpha} (u, \tau \cdot \n)_\bnd - \frac{1}{1 + \alpha} (g, \tau \cdot \n)_\bnd \\
		&- \alpha (u,v)_\bnd - \frac{\alpha}{1+\alpha} (\sigma \cdot \n, v)_\bnd + \frac{\alpha^2}{1+\alpha} (u, v)_\bnd + \frac{\alpha}{1+\alpha}(g, v)_\bnd \\
	= & \frac{1}{1+\alpha} \left ( (\sigma \cdot \n, \tau \cdot \n)_\bnd - \alpha \left ( (u, \tau \cdot \n)_\bnd + (\sigma \cdot \n, v)_\bnd + (u,v)_\bnd \right ) \right ) \\
		&- \frac{1}{1+\alpha} (g, \tau \cdot \n - \alpha v)_\bnd.
\end{split}
\end{align}
\end{proof}

The next step towards an error bound for the flux is to introduce a suitable projection.
The first result holds for general projections inserted into the DG formulation.
\begin{lemma} \label{lem::dgproject}
Let $P$ be a projection
\begin{align}
	P : (\sigma, u) \mapsto (P_\sigma (\sigma, u), P_u (\sigma, u)) \in \mathcal{RT}^p(\mathcal{T}) \times \mathcal{P}^p(\mathcal{T})
\end{align}
into discrete spaces.
With projected errors defined as
\begin{align}
	e_\sigma^P := P_\sigma (\sigma, u) - \sigma_h, && e_u^P := P_u (\sigma, u) - u_h,
\end{align}
there holds
\begin{align}
\begin{split}
	 \w \sum_{T \in \mathcal{T}} \lVert e_\sigma^P \rVert_T^2 = - \Im \Bigg ( \sum_{T \in \mathcal{T}} &  j \w (\sigma - P_\sigma(\sigma, u), e_\sigma^P)_T + (u - P_u (\sigma, u), \diver e_\sigma^P)_T \\
	& - (\sigma - P_\sigma(\sigma, u), \grad e_u^P)_T - j \w (u - P_u (\sigma, u), e_u^P)_T - j \w \lVert e_u^P \rVert^2_T \Bigg )\\
	- \Im \Bigg ( \sum_{F_{\rm I} \in \mathcal{F}} & \left (\{ \sigma - P_\sigma (\sigma, u) \}_{\n_+} - \frac{\alpha}{2} [u - P_u (\sigma, u)]_+, [e_u^P]_+ \right)_{F_{\rm I}} \\
	&+ \left( \left ( \frac{1}{2 \alpha} + \frac{\beta}{2} \right ) [\sigma - P_\sigma (\sigma, u)]_\n - \{u - P_u (\sigma, u) \}, [e_\sigma^P]_\n \right)_{F_{\rm I}} \Bigg ) \\
	- \frac{1}{1+\alpha} \Im & \left ((\sigma - P_\sigma)\cdot \n - \alpha (u - P_u), e_\sigma^P \cdot \n + e_u^P \right )_\bnd.
\end{split}
\end{align}
\end{lemma}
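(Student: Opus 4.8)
The plan is to mimic the proof of the imaginary-part estimate \eqref{eq::volstabestimate} in Lemma \ref{lam::bounds}, but now relative to the projection $P$ instead of to the trivial decomposition, with the essential structural input being \emph{Galerkin orthogonality for the DG formulation}. By consistency together with Lemma \ref{lem::facetellim}, both the continuous pair $(\sigma, u)$ and the discrete pair $(\sigma_h, u_h)$ satisfy the preceding DG formulation against all discrete test functions (the replacement of $g$ by $\Pi_F g$ on $\bnd$ is harmless, since the normal traces of $\mathcal{RT}^p$ and the traces of $\mathcal{P}^p$ are polynomials of degree at most $p$ on each boundary facet). Subtracting the two identities gives $B_{DG}(\sigma - \sigma_h, u - u_h, \tau_h, v_h) = 0$ for all discrete $(\tau_h, v_h)$. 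Writing $\sigma - \sigma_h = (\sigma - P_\sigma(\sigma, u)) + e_\sigma^P$ and $u - u_h = (u - P_u(\sigma, u)) + e_u^P$ and using linearity in the trial arguments turns this into
\[
 B_{DG}(e_\sigma^P, e_u^P, \tau_h, v_h) = - B_{DG}\big(\sigma - P_\sigma(\sigma, u),\, u - P_u(\sigma, u),\, \tau_h, v_h\big),
\]
which I would take as the starting identity.

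Next I would insert the admissible test functions $(\tau_h, v_h) = (e_\sigma^P, e_u^P)$ and take imaginary parts. On the left-hand side the computation is identical in spirit to Lemma \ref{lam::bounds}: pairing each variable with itself renders the off-diagonal volume terms $(e_u^P, \diver e_\sigma^P)_T + (\diver e_\sigma^P, e_u^P)_T$, the facet couplings $-(\{e_u^P\}, [e_\sigma^P]_\n) - ([e_\sigma^P]_\n, \{e_u^P\})$, the self-penalties $\frac{\alpha}{2}\lVert [e_u^P]_+\rVert^2$ and $(\tfrac{1}{2\alpha} + \tfrac{\beta}{2})\lVert [e_\sigma^P]_\n\rVert^2$, and all $\bnd$-contributions real, so that only the diagonal terms $j\w\lVert e_\sigma^P\rVert_T^2$ and $-j\w\lVert e_u^P\rVert_T^2$ contribute to the imaginary part. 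Hence $\Im B_{DG}(e_\sigma^P, e_u^P, e_\sigma^P, e_u^P) = \w\sum_T \lVert e_\sigma^P\rVert_T^2 - \w\sum_T \lVert e_u^P\rVert_T^2$. I would then rearrange to isolate $\w\sum_T\lVert e_\sigma^P\rVert_T^2$, absorbing $\w\sum_T\lVert e_u^P\rVert_T^2$ as $-\Im\big(-j\w\sum_T\lVert e_u^P\rVert_T^2\big)$ so that it merges into the single $-\Im(\cdots)$ appearing on the right of the claim.

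It then remains to expand $-B_{DG}(\sigma - P_\sigma, u - P_u, e_\sigma^P, e_u^P)$ into the stated shape. The only volume term not already in that shape is $(\diver(\sigma - P_\sigma), e_u^P)_T$, which I would integrate by parts to $-(\sigma - P_\sigma, \grad e_u^P)_T + ((\sigma - P_\sigma)\cdot\n, e_u^P)_\elbnd$, producing exactly the $-(\sigma - P_\sigma, \grad e_u^P)_T$ volume term of the statement plus element-boundary terms. Summing these over $\mathcal{T}$ and applying the elementary DG identity $a_+\cdot\n_+\, b_+ + a_-\cdot\n_-\, b_- = [a]_\n\{b\} + \{a\}_{\n_+}[b]_+$ on each inner facet, the contribution $([\,\sigma - P_\sigma\,]_\n, \{e_u^P\})_{F_{\rm I}}$ cancels against the matching coupling in $B_{DG}$, leaving precisely the facet pairings against $[e_u^P]_+$ and $[e_\sigma^P]_\n$ of the claim. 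Finally, the boundary pieces on $\bnd$—namely $((\sigma-P_\sigma)\cdot\n, e_u^P)_\bnd$ from the integration by parts together with the four $\tfrac{1}{1+\alpha}$-weighted boundary terms of $B_{DG}$—combine, after collecting the $((\sigma-P_\sigma)\cdot\n, e_u^P)_\bnd$ coefficients to $\tfrac{1}{1+\alpha}$, into the factored term $\tfrac{1}{1+\alpha}\big((\sigma - P_\sigma)\cdot\n - \alpha(u - P_u), e_\sigma^P\cdot\n + e_u^P\big)_\bnd$, whose $-\Im$ is the last line of the statement.

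I expect the genuine difficulty to be bookkeeping rather than conceptual: one must track signs carefully through the integration by parts, verify the average/jump regrouping and the cancellation of $([\,\sigma - P_\sigma\,]_\n, \{e_u^P\})$ on inner facets, and confirm the $\tfrac{1}{1+\alpha}$ algebra on $\bnd$. The imaginary-part selection that annihilates every symmetric term is the same mechanism already used in Lemma \ref{lam::bounds}, so beyond Galerkin orthogonality and this regrouping no new idea is required.
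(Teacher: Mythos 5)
Your proposal is correct and follows essentially the same route as the paper's proof: Galerkin orthogonality for the DG form, insertion of the projection into the trial functions, integration by parts of the divergence term with the jump/average regrouping on inner facets, collection of the $\tfrac{1}{1+\alpha}$ boundary terms, and taking the imaginary part so that every symmetric (real) term vanishes. The only difference is organizational — you split $B_{DG}$ by sesquilinearity into the $e^P$-part and the projection-error part before expanding, whereas the paper expands everything at once — which does not change the substance.
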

\begin{proof}
The main idea is a repetition of the arguments in the proof of Lemma \ref{lem::adjprojectproperty}.
To shorten the notation the arguments $(\sigma, u)$ of $P_\sigma(\sigma, u)$ and $P_u (\sigma, u)$ will be omitted in the proof.
Applying the Galerkin orthogonality and inserting the projection yields
\begin{align}
\begin{split}
	0 =& B_{DG}(\sigma - \sigma_h, u - u_h, u - \hat u_h, e_\sigma^P, e_u^P, e_{\hat u}) \\
	=& \sum_{T \in \mathcal{T}} j \w (\sigma - P_\sigma + e_\sigma^P, e_\sigma^P)_T + (u - P_u + e_u^P, \diver e_\sigma^P)_T \\
	& + (\diver (\sigma - P_\sigma + e_\sigma^P), e_u^P)_T - j \w (u - P_u + e_u^P, e_u^P)_T \\
	+ & \sum_{F_{\rm I} \in \mathcal{F}} - (\{u - P_u + e_u^P\}, [e_\sigma^P]_\n )_{F_{\rm I}}
	- ([\sigma - P_\sigma + e_\sigma^P]_\n, \{e_u^P\})_{F_{\rm I}} \\
	&- \frac{\alpha}{2} ([u - P_u + e_u^P]_+, [e_u^P]_+)_{F_{\rm I}}
	+ \left ( \frac{1}{2 \alpha} + \frac{\beta}{2} \right ) ([\sigma - P_\sigma + e_\sigma^P]_\n, [e_\sigma^P]_\n)_{F_{\rm I}} \\
	+& \frac{1}{1+\alpha} \Big ( ((\sigma - P_\sigma + e_\sigma^P) \cdot \n, e_\sigma^P \cdot \n)_\bnd - \alpha (u - P_u + e_u^P, e_\sigma^P \cdot \n)_\bnd  \\
	&- \alpha ((\sigma - P_\sigma + e_\sigma^P) \cdot \n, e_u^P)_\bnd - \alpha (u - P_u + e_u^P, e_u^P)_\bnd \Big ).
\end{split} \label{eq::BDGanal}
\end{align}
For the volume terms there holds after partial integration
\begin{align}
\begin{split}
	\sum_{T \in \mathcal{T}}& j \w (\sigma - P_\sigma + e_\sigma^P, e_\sigma^P)_T + (u - P_u + e_u^P, \diver e_\sigma^P)_T \\
	& + (\diver (\sigma - P_\sigma + e_\sigma^P), e_u^P)_T - j \w (u - P_u + e_u^P, e_u^P)_T \\
	= &\sum_{T \in \mathcal{T}} j \w (\sigma - P_\sigma, e_\sigma^P)_T + j \w \lVert e_\sigma^P \rVert_T^2 + (u - P_u, \diver e_\sigma^P)_T + 2 \Re (e_u^P, \diver e_\sigma^P)_T \\
	& - (\sigma - P_\sigma, \grad e_u^P)_T + ((\sigma - P_\sigma) \cdot \n, e_u^P)_\elbnd - j \w (u - P_u, e_u^P)_T - j \w \lVert e_u^P \rVert^2_T
\end{split}
\end{align}
and connecting the arising boundary terms on inner facets leads to
\begin{align}
	((\sigma - P_{\sigma,+})  \cdot \n_+, e_{u,+}^{P})_{F_{\rm I}} + ((\sigma - P_{\sigma,-})  \cdot \n_-, e_{u,-}^{P})_{F_{\rm I}} = (\{ \sigma - P_\sigma \}_{\n_+}, [e_u^P]_+)_{F_{\rm I}} + ([\sigma - P_\sigma]_\n, \{e_u^P\})_{F_{\rm I}}.
\end{align}
Combining these with the facet terms in \eqref{eq::BDGanal} results in
\begin{align}
\begin{split}
	\sum_ {F_{\rm I} \in \mathcal{F}} &  - (\{u - P_u + e_u^P\}, [e_\sigma^P]_\n )_{F_{\rm I}}
	- ([\sigma - P_\sigma + e_\sigma^P]_\n, \{e_u^P\})_{F_{\rm I}} \\
	&- \frac{\alpha}{2} ([u - P_u + e_u^P]_+, [e_u^P]_+)_{F_{\rm I}}
	+ \left ( \frac{1}{2 \alpha} + \frac{\beta}{2} \right ) ([\sigma - P_\sigma + e_\sigma^P]_\n, [e_\sigma^P]_\n)_{F_{\rm I}}\\
	&+ (\{ \sigma - P_\sigma \}_{\n_+}, [e_u^P]_+)_{F_{\rm I}} + ([\sigma - P_\sigma]_\n, \{e_u^P\})_{F_{\rm I}} \\
	= \sum_{F_{\rm I} \in \mathcal{F}} & - 2 \Re (\{e_u^P\},[e_\sigma^P]_\n)_{F_{\rm I}}
	+ \left (\{ \sigma - P_\sigma \}_{\n_+} - \frac{\alpha}{2} [u - P_u]_+, [e_u^P]_+ \right)_{F_{\rm I}} - \frac{\alpha}{2} \lVert [e_u^P]_+ \rVert^2_{F_{\rm I}} \\
	&+ \left( \left ( \frac{1}{2 \alpha} + \frac{\beta}{2} \right ) [\sigma - P_\sigma]_\n - \{u - P_u\}, [e_\sigma^P]_\n \right)_{F_{\rm I}} + \left ( \frac{1}{2 \alpha} + \frac{\beta}{2} \right ) \lVert [e_\sigma^P]_\n \rVert^2_{F_{\rm I}}.
\end{split}
\end{align}
Finally, on the domain boundary there holds, considering the additional term arising from partial integration,
\begin{align}
\begin{split}
	& \frac{1}{1+\alpha} \Big ( ((\sigma - P_\sigma + e_\sigma^P) \cdot \n, e_\sigma^P \cdot \n)_\bnd - \alpha (u - P_u + e_u^P, e_\sigma^P \cdot \n)_\bnd  \\
	&- \alpha ((\sigma - P_\sigma + e_\sigma^P) \cdot \n, e_u^P)_\bnd - \alpha (u - P_u + e_u^P, e_u^P)_\bnd \Big ) + ((\sigma - P_\sigma) \cdot \n, e_u^P)_\bnd \\
	=& \frac{1}{1+\alpha} \Big ( ((\sigma - P_\sigma) \cdot \n, e_\sigma^P \cdot \n)_\bnd + \lVert e_\sigma^P \cdot \n \rVert^2_\bnd 
		- \alpha (u - P_u, e_\sigma^P \cdot \n)_\bnd - 2 \alpha \Re (e_u^P, e_\sigma^P \cdot \n)_\bnd \\
	&+ ((\sigma - P_\sigma) \cdot \n, e_u^P)_\bnd
		- \alpha (u - P_u, e_u^P)_\bnd - \alpha \lVert e_u^P \rVert^2_\bnd \Big ) \\
	=& \frac{1}{1+\alpha} \Big ( ((\sigma - P_\sigma) \cdot \n - \alpha (u - P_u), e_\sigma^P \cdot \n)_\bnd + \lVert e_\sigma^P \cdot \n \rVert^2_\bnd 
		- 2 \alpha \Re (e_u^P, e_\sigma^P \cdot \n)_\bnd \\
	&+ ((\sigma - P_\sigma) \cdot \n - \alpha (u - P_u), e_u^P)_\bnd
		- \alpha \lVert e_u^P \rVert^2_\bnd \Big ) \\
	=& \frac{1}{1+\alpha} \Big ( ((\sigma - P_\sigma) \cdot \n - \alpha (u - P_u), e_\sigma^P \cdot \n + e_u^P)_\bnd + \lVert e_\sigma^P \cdot \n \rVert^2_\bnd 
		- 2 \alpha \Re (e_u^P, e_\sigma^P \cdot \n)_\bnd
		- \alpha \lVert e_u^P \rVert^2_\bnd \Big ).
\end{split}
\end{align}
Taking the imaginary part implies
\begin{align}
	0 =  \Im  B_{DG} &(\sigma - \sigma_h, u - u_h, e_{\hat u}, e_\sigma^P, e_u^P, e_{\hat u}) \\
		= \Im \sum_{T \in \mathcal{T}} &  j \w (\sigma - P_\sigma, e_\sigma^P)_T + j \w \lVert e_\sigma^P \rVert_T^2 + (u - P_u, \diver e_\sigma^P)_T \\
	& - (\sigma - P_\sigma, \grad e_u^P)_T - j \w (u - P_u, e_u^P)_T - j \w \lVert e_u^P \rVert^2_T \\
	+ \Im \sum_{F_{\rm I} \in \mathcal{F}} & \left (\{ \sigma - P_\sigma \}_{\n_+} - \frac{\alpha}{2} [u - P_u]_+, [e_u^P]_+ \right)_{F_{\rm I}} \\
	&+ \left( \left ( \frac{1}{2 \alpha} + \frac{\beta}{2} \right ) [\sigma - P_\sigma]_\n - \{u - P_u\}, [e_\sigma^P]_\n \right)_{F_{\rm I}} \\
	+ \frac{1}{1+\alpha} \Im & ((\sigma - P_\sigma)\cdot \n - \alpha (u - P_u), e_\sigma^P \cdot \n + e_u^P)_\bnd,
\end{align}
which concludes the proof.
\end{proof}

From the previous lemma, it can be seen that a suitable projection for vanishing facet terms needs to satisfy
\begin{align}
	\left ( \{ \sigma - P_\sigma \}_{\n_+} - \frac{\alpha}{2} [u - P_u]_+, \mu_h \right )_{F_{\rm I}} = 0 && \forall \mu_h \in \mathcal{P}^p(F_{\rm I}), \\
	\left ( \left ( \frac{1}{2 \alpha} + \frac{\beta}{2} \right ) [\sigma - P_\sigma]_\n - \{u - P_u\}, \nu_h \right )_{F_{\rm I}} = 0 && \forall \nu_h \in \mathcal{P}^p(F_{\rm I}),
\end{align}
on inner facets.
The two conditions per facet can be reformulated into the following equivalent form
\begin{align}
	\left ( \frac{1}{\alpha} + \beta \right ) ((\sigma - P_{\sigma,+}) \cdot \n_+, \mu_h)_{F_{\rm I}} &= (u - P_{u,+}, \mu_h)_{F_{\rm I}} + \frac{\alpha \beta}{2} ([u - P_u]_+), \mu_h)_{F_{\rm I}}, \\
	\left ( \frac{1}{\alpha} + \beta \right ) ((\sigma - P_{\sigma,-}) \cdot \n_-, \nu_h)_{F_{\rm I}} &= (u - P_{u,-}, \nu_h)_{F_{\rm I}} + \frac{\alpha \beta}{2} ([u - P_u]_-), \nu_h)_{F_{\rm I}}.
\end{align}
Without a $\beta$-stabilisation this would exactly be the projection in \cite{cockburn2010projection, griesmaier2011error, Sayas2013FromRT}.
Collecting all required properties of the projection looks as follows.

\begin{definition} \label{def::Projection}
Let $P$ be the projection
\begin{align}
	P : (\sigma, u) \mapsto (P_\sigma (\sigma, u), P_u (\sigma, u)) \in \mathcal{RT}^p(\mathcal{T}) \times \mathcal{P}^p(\mathcal{T})
\end{align}
defined by
\begin{subequations}
\begin{align}
	(P_\sigma (\sigma, u), \tau_h)_T &= (\sigma, \tau_h)_T &&\forall \tau_h \in [\mathcal{P}^{p-1}(T)]^d, \label{eq:sigmaproj}\\
	(P_u (\sigma, u), v_h)_T &= (u, v_h)_T &&\forall v_h \in \mathcal{P}^p(T), \label{eq:uproj}\\
	\left ( \frac{1}{2 \alpha} + \frac{\beta}{2} \right ) ([\sigma - P_\sigma]_+, \mu_h)_{F_{\rm I}} &= (\{u - P_u\}, \mu_h)_{F_{\rm I}}  && \forall \mu_h \in \mathcal{P}^{p}({F_{\rm I}}), \label{eq:sigmaelbnd} \\
	(\{ \sigma - P_\sigma \}_+, \mu_h)_{F_{\rm I}} &= \frac{\alpha}{2} ([u - P_u]_+, \mu_h )_{F_{\rm I}} && \forall \mu_h \in \mathcal{P}^{p}({F_{\rm I}}), \\
	(\sigma \cdot \n - P_\sigma \cdot \n, \mu_h)_{F_{\rm O}} &= \alpha ( u - P_u, \mu_h)_{F_{\rm O}} &&\forall \mu_h \in \mathcal{P}^{p}({F_{\rm O}}). \label{eq:sigmabnd}
\end{align}
\end{subequations}
\end{definition}
On the domain boundary, the projection has to have the same properties as the projection in  \cite{cockburn2010projection}.
For $P$ existence and uniqueness, as well as the projection property and a suitable approximation property need to be proven.
The definition represents a square linear system of equations, therefore uniqueness of the projection automatically implies its existence.

The projection $P_u$ is decoupled from $\sigma$.
The properties for $P_u$ are proven in the following lemma.
\begin{lemma}[Projection Decoupling]
The projection $P_u$ is uniquely defined by
\begin{align}
	(P_u(\sigma, u), v_h)_T = (u, v_h)_T &&\forall v_h \in \mathcal{P}^{p}(T)
\end{align}
and only depends on $u$, therefore $P_u(\sigma, u) = P_u(u)$.
For $u \in H^{p+1}(T)$ there holds
\begin{align}
	\lVert u - P_u(u) \rVert_T \leq C h^{p+1} \vert u \vert_{H^{p+1}(T)},
\end{align}
with a constant $C > 0$ independent of $h, \w, \alpha, \beta$.
\end{lemma}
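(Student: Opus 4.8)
The plan is to exploit the block structure of the defining system in Definition \ref{def::Projection}. First I would observe that equation \eqref{eq:uproj} is self-contained: the unknown $P_u(\sigma,u)$ lies in $\mathcal{P}^p(T)$ and the test function $v_h$ ranges over the \emph{entire} space $\mathcal{P}^p(T)$. Hence \eqref{eq:uproj} represents a square linear system whose matrix, with respect to any basis of $\mathcal{P}^p(T)$, is the local mass matrix $(\,\cdot\,,\,\cdot\,)_T$. Since this mass matrix is symmetric positive definite, it is invertible, so \eqref{eq:uproj} alone determines $P_u(\sigma,u)$ uniquely, without reference to the remaining equations or to $\sigma$. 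This immediately yields the decoupling $P_u(\sigma,u)=P_u(u)$ and, comparing with the element $\ltwo$-projection $\Pi$ introduced before Lemma \ref{lem::adjprojectproperty}, identifies $P_u(u)=\Pi u$ as the $\ltwo(T)$-orthogonal projection onto $\mathcal{P}^p(T)$. The remaining equations of Definition \ref{def::Projection} then serve only to fix $P_\sigma$ once $P_u$ is known.

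For the approximation bound I would use the best-approximation characterisation of the $\ltwo$-projection, namely $\lVert u - P_u(u)\rVert_T=\inf_{v_h\in\mathcal{P}^p(T)}\lVert u-v_h\rVert_T$. Choosing $v_h$ to be a quasi-interpolant (or simply a truncated Taylor polynomial) of optimal order and invoking the Bramble--Hilbert lemma on the reference element, followed by the standard affine scaling to the physical element $T$, gives $\lVert u-P_u(u)\rVert_T\le C h^{p+1}\vert u\vert_{H^{p+1}(T)}$. The constant $C$ depends only on $p$ and the shape regularity of $T$; crucially it is independent of $\w,\alpha,\beta$, because the projection $\Pi$ does not involve any of these parameters.

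I do not expect a genuine obstacle here: the entire content of the lemma is the decoupling observation that \eqref{eq:uproj} is already a square, full-rank problem for $P_u$ by itself, after which both the uniqueness and the error estimate reduce to textbook facts about $\ltwo$-projections. The only point requiring a line of care is verifying that \eqref{eq:uproj} is indeed square, so that invertibility of the mass matrix applies and no compatibility with the coupling equations \eqref{eq:sigmaelbnd}--\eqref{eq:sigmabnd} is needed to pin down $P_u$.
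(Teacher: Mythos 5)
Your proposal is correct and follows exactly the route of the paper, whose proof is the one-line observation that \eqref{eq:uproj} identifies $P_u(\sigma,u)$ with the $\ltwo$-projection $\Pi u$; you simply spell out the standard details (invertibility of the local mass matrix, best approximation, Bramble--Hilbert and scaling) that the paper leaves implicit. No gap.
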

\begin{proof}
Equation \eqref{eq:uproj} implies that $P_u(\sigma, u)$ is the $\ltwo$-projection $\Pi u$.
\end{proof}

The proof for $P_\sigma$ is more involved and similar to the analysis in \cite{cockburn2010projection, Sayas2013FromRT}.

\begin{lemma} \label{lem::helper}
The space $\mathcal{P}^p_\perp(T)$ is defined by
\begin{align}
	\mathcal{P}^p_\perp (T) := \{u \in \mathcal{P}^p(T) : (u, v)_T = 0, \forall v \in \mathcal{P}^{p-1}(T) \}.
\end{align}
If $u \in P^p_\perp(T)$ satisfies $u = 0$ on a facet of $T$ then $u \equiv 0$ on the whole element.
\end{lemma}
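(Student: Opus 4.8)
The plan is to exploit the algebraic structure of the problem: a facet of the simplex $T$ lies on the zero set of an affine (barycentric) coordinate, and vanishing there forces divisibility by that coordinate, which in turn collides with the orthogonality built into the definition of $\mathcal{P}^p_\perp(T)$. First I would fix the affine function $\lambda$ associated with the given facet $F$, namely the barycentric coordinate of the vertex opposite $F$, characterised by $\lambda = 0$ on $F$ and $\lambda > 0$ in the interior of $T$. This is the natural object that simultaneously encodes ``vanishing on $F$'' and carries a definite sign on $T$.

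Second, I would invoke the standard divisibility fact: a polynomial of degree at most $p$ vanishing on the hyperplane $\{\lambda = 0\}$ is divisible by $\lambda$. Concretely, after an affine change of variables sending $\lambda$ to the first coordinate $x_1$, one expands $u = \sum_k x_1^k\, p_k(x_2,\dots,x_d)$; evaluating at $x_1 = 0$ and using $u|_F = 0$ yields $p_0 \equiv 0$, so that $u = \lambda\, q$ with $q \in \mathcal{P}^{p-1}(T)$. The point to check carefully here is that the quotient $q$ has degree exactly at most $p-1$, since this is precisely what makes $q$ an admissible test function for the orthogonality relation.

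Third, I would apply the defining property of $\mathcal{P}^p_\perp(T)$ with the test function $v := q \in \mathcal{P}^{p-1}(T)$, which gives
\begin{align}
	0 = (u, q)_T = (\lambda q, q)_T = \int_T \lambda \, \lvert q \rvert^2 \, d\x .
\end{align}
Since $\lambda$ is real and strictly positive on the interior of $T$ while $\lvert q \rvert^2 \geq 0$, the integrand is nonnegative and its integral vanishes only if $q \equiv 0$ on $T$; hence $u = \lambda q \equiv 0$, completing the argument.

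The hard part will be the divisibility step, which is the structural heart of the proof: one must argue cleanly that vanishing on the facet hyperplane implies factorisation $u = \lambda q$ with $q$ of degree at most $p-1$. The positivity of $\lambda$ on $T$ is the second essential ingredient, as it is exactly what turns the single orthogonality test $(u,q)_T = 0$ into a definiteness statement forcing $q$, and therefore $u$, to vanish identically.
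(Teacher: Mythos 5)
Your argument is correct: factoring $u=\lambda q$ via the barycentric coordinate of the facet and testing the orthogonality with $q\in\mathcal{P}^{p-1}(T)$ to obtain $\int_T\lambda\,|q|^2\,d\x=0$ is exactly the standard proof. The paper itself does not prove this lemma but defers to \cite[Lemma A.1]{cockburn2010projection} and \cite[Lemma 2.1]{Sayas2013FromRT}, and your proof is essentially the one given in those references, so there is nothing to add.
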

\begin{proof}
See \cite[Lemma A.1]{cockburn2010projection} and \cite[Lemma 2.1]{Sayas2013FromRT}.
\end{proof}

\begin{lemma} \label{lem::helper2}
The space $\mathcal{RT}^p_\perp (T)$ is defined by
\begin{align}
	\mathcal{RT}^p_\perp (T) := \{\sigma \in \mathcal{RT}^p(T) : (\sigma, \tau)_T = 0, \forall \tau \in [\mathcal{P}^{p-1}(T)]^d \}.
\end{align}
Assume $\sigma \in \mathcal{RT}^p_\perp(T)$ then there holds
\begin{align}
	\lVert \sigma \rVert_T \leq C h^{\frac{1}{2}} \lVert \sigma \cdot \n \rVert_\elbnd,
\end{align}
with a constant $C > 0$ independent of $h, \w, \alpha, \beta$.
\end{lemma}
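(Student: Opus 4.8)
The plan is to reduce the estimate to a fixed reference element by the contravariant Piola transform together with a scaling argument, and to establish the inequality on the reference element from the unisolvence of the Raviart--Thomas degrees of freedom. Let $F_T : \hat T \to T$ be the affine element map with constant Jacobian $B$, so that $\lvert \det B \rvert \sim h^d$, $\lVert B \rVert \sim h$ and $\lVert B^{-1} \rVert \sim h^{-1}$, with hidden constants depending only on shape regularity. First I would pull $\sigma$ back by the Piola transform $\sigma = \tfrac{1}{\det B} B \hat\sigma$, which maps $\mathcal{RT}^p(T)$ onto $\mathcal{RT}^p(\hat T)$. A short computation gives $(\sigma,\tau)_T = (\hat\sigma, B^\top (\tau \circ F_T))_{\hat T}$ for $\tau \in [\mathcal{P}^{p-1}(T)]^d$, and since $\tau \mapsto B^\top(\tau\circ F_T)$ is a bijection of $[\mathcal{P}^{p-1}(\hat T)]^d$, the orthogonality defining $\mathcal{RT}^p_\perp$ is preserved, so that $\hat\sigma \in \mathcal{RT}^p_\perp(\hat T)$.

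On the reference element I would prove the constant-coefficient bound $\lVert \hat\sigma \rVert_{\hat T} \le \hat C \lVert \hat\sigma \cdot \hat\n \rVert_{\partial \hat T}$. Both sides are (semi)norms on the finite-dimensional space $\mathcal{RT}^p_\perp(\hat T)$, so it suffices to show that the map $\hat\sigma \mapsto \hat\sigma \cdot \hat\n|_{\partial \hat T}$ is injective there; equivalence of norms on a finite-dimensional space then supplies $\hat C = \hat C(\hat T,p)$. For injectivity, suppose $\hat\sigma \in \mathcal{RT}^p_\perp(\hat T)$ with $\hat\sigma \cdot \hat\n = 0$ on $\partial \hat T$. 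The Raviart--Thomas degrees of freedom are the normal facet moments $(\hat\sigma\cdot\hat\n,\mu)_{\hat F}$ with $\mu \in \mathcal{P}^p(\hat F)$ together with the interior moments $(\hat\sigma,\tau)_{\hat T}$ with $\tau \in [\mathcal{P}^{p-1}(\hat T)]^d$. The former vanish because $\hat\sigma\cdot\hat\n = 0$ and the latter because $\hat\sigma \in \mathcal{RT}^p_\perp(\hat T)$; by unisolvence $\hat\sigma \equiv 0$. This is the $\mathcal{RT}$-analogue of Lemma \ref{lem::helper}.

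Finally I would transfer the inequality back by Piola scaling. The volume norm obeys $\lVert \sigma \rVert_T \sim h^{1 - d/2}\lVert \hat\sigma \rVert_{\hat T}$, since $\int_T \lvert\sigma\rvert^2 = \tfrac{1}{\lvert\det B\rvert}\int_{\hat T}\lvert B\hat\sigma\rvert^2$. For the normal trace, using $\n = B^{-\top}\hat\n / \lvert B^{-\top}\hat\n\rvert$, the identity $\sigma\cdot\n = \tfrac{1}{\det B}(\hat\sigma\cdot\hat\n)/\lvert B^{-\top}\hat\n\rvert$ and $ds = \lvert\det B\rvert\,\lvert B^{-\top}\hat\n\rvert\,d\hat s$, one obtains $\int_{\partial T}\lvert\sigma\cdot\n\rvert^2\,ds = \tfrac{1}{\lvert\det B\rvert}\int_{\partial\hat T}\lvert\hat\sigma\cdot\hat\n\rvert^2/\lvert B^{-\top}\hat\n\rvert\,d\hat s$, and since $\lvert B^{-\top}\hat\n\rvert \sim h^{-1}$ this gives $\lVert\sigma\cdot\n\rVert_{\partial T} \sim h^{(1-d)/2}\lVert\hat\sigma\cdot\hat\n\rVert_{\partial\hat T}$. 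Chaining the three relations yields $\lVert\sigma\rVert_T \lesssim h^{1-d/2}\lVert\hat\sigma\rVert_{\hat T} \le \hat C\,h^{1-d/2}\lVert\hat\sigma\cdot\hat\n\rVert_{\partial\hat T} \lesssim \hat C\,h^{1-d/2}h^{(d-1)/2}\lVert\sigma\cdot\n\rVert_{\partial T} = C h^{1/2}\lVert\sigma\cdot\n\rVert_{\partial T}$, with $C$ depending only on shape regularity and $p$, hence independent of $\w,\alpha,\beta$.

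The main obstacle is the bookkeeping of the Piola scaling of the normal trace — in particular the factor $\lvert B^{-\top}\hat\n\rvert \sim h^{-1}$ that produces the half power of $h$ — and the verification that every constant inherits only shape-regularity dependence, so that the final bound is genuinely $\w,\alpha,\beta$-independent. By contrast, the injectivity step is immediate from $\mathcal{RT}$ unisolvence, and the preservation of $\mathcal{RT}^p_\perp$ under the Piola transform is a routine change of variables.
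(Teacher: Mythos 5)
Your proof is correct, and it reaches the inequality by a genuinely different route for the key step. The paper also reduces the lemma to showing that $\sigma \mapsto \sigma \cdot \n|_{\elbnd}$ is injective on $\mathcal{RT}^p_\perp(T)$ followed by a scaling argument, but it establishes injectivity by hand: from $\sigma \cdot \n = 0$ it deduces $\lVert \diver \sigma \rVert_T^2 = (\sigma\cdot\n,\diver\sigma)_\elbnd - (\sigma,\grad\diver\sigma)_T = 0$ using $\grad\diver\sigma \in [\mathcal{P}^{p-1}(T)]^d$, concludes $\sigma \in [\mathcal{P}^p(T)]^d$, and then applies the scalar result of Lemma \ref{lem::helper} componentwise, mirroring the argument in Cockburn--Gopalakrishnan--Sayas. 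You instead invoke the classical unisolvence of the Raviart--Thomas degrees of freedom (facet normal moments against $\mathcal{P}^p(F)$ plus interior moments against $[\mathcal{P}^{p-1}(T)]^d$), which makes Lemma \ref{lem::helper} and the divergence computation unnecessary: vanishing normal trace kills the facet degrees of freedom and membership in $\mathcal{RT}^p_\perp$ kills the interior ones, so $\hat\sigma \equiv 0$ immediately. Your version is shorter and rests on a textbook fact; the paper's version is more self-contained and reuses the scalar lemma it needs anyway. You also carry out explicitly the Piola-transform bookkeeping (preservation of the orthogonality constraint, the $h^{1-d/2}$ volume scaling and the $h^{(1-d)/2}$ normal-trace scaling combining to $h^{1/2}$) that the paper compresses into the phrase ``standard scaling argument''; your exponents check out, and the resulting constant indeed depends only on shape regularity and $p$, hence is independent of $\w$, $\alpha$, $\beta$.
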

\begin{proof}
The proof is similar to \cite[Proposition A.3]{cockburn2010projection} and \cite[Lemma 2.1]{Sayas2013FromRT}.
First, it is shown that the boundary term is a norm on the space $\mathcal{RT}^p_\perp(T)$.
Assuming $\sigma \cdot \n = 0$ on $\elbnd$ then there holds
\begin{align}
	\lVert \diver \sigma \rVert_T^2 = (\sigma \cdot \n, \diver \sigma)_\elbnd - (\sigma, \grad \diver \sigma)_T = 0,
\end{align}
because $\grad \diver \sigma \in [\mathcal{P}^{p-1}(T)]^d$.
According to \cite[Proposition 2.3]{Sayas2013FromRT} this implies $\sigma \in [\mathcal{P}^{p}(T)]^d$ and by splitting $\sigma$ into
\begin{align}
	\sigma = \sum_{i=1}^{d-1} \sigma \cdot \n_i
\end{align}
there holds $\sigma \cdot \n_i \in \mathcal{P}^{p}_\perp (T)$ as well as $\sigma \cdot \n_i = 0$ on the facet $F_{\rm I}$. 
Then Lemma \ref{lem::helper} implies that $\sigma \cdot \n_i$ vanishes on the whole element
and therefore $\sigma = 0$. The estimate is proven by a standard scaling argument.
\end{proof}

With this lemma, the uniqueness and approximation property of $P_\sigma$ can be proven.

\begin{lemma} \label{lem::psigmaapproximation}
Assuming $H^{p+1}$-regularity of $\sigma$ and $u$, there exists a constant $C > 0$, independent of $\w, h, \alpha, \beta$, so that
\begin{align}
	\lVert \sigma - P_\sigma (\sigma, u) \rVert_\domain \leq C h^{p+1} \left ( \vert \sigma \vert_{H^{p+1}} + \frac{1 + \alpha \beta}{\alpha^{-1} + \beta} \vert u \vert_{H^{p+1}} \right ).
\end{align}
\end{lemma}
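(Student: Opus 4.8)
The plan is to adapt the projection-based argument of \cite{cockburn2010projection, Sayas2013FromRT} to the additional $\beta$-stabilisation and to reduce the whole estimate to the norm equivalence on $\mathcal{RT}^p_\perp(T)$ provided by Lemma \ref{lem::helper2}.

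First I would settle uniqueness, which by the square-system remark following Definition \ref{def::Projection} already implies existence. Since $P_u = \Pi u$ is decoupled, setting $\sigma = 0$ and $u = 0$ forces $P_u = 0$; the interior conditions \eqref{eq:sigmaproj} then give $P_\sigma \in \mathcal{RT}^p_\perp(T)$, and the facet conditions collapse to $(\alpha^{-1}+\beta)(P_\sigma \cdot \n, \mu_h)_{F_{\rm I}} = 0$ and $(P_\sigma \cdot \n, \mu_h)_{F_{\rm O}} = 0$ for all $\mu_h \in \mathcal{P}^p(F)$. Choosing $\mu_h = P_\sigma \cdot \n \in \mathcal{P}^p(F)$ shows that $P_\sigma \cdot \n$ vanishes on $\elbnd$, whence Lemma \ref{lem::helper2} gives $P_\sigma = 0$.

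For the approximation estimate I would fix an element $T$, let $Q \sigma \in [\mathcal{P}^p(T)]^d \subset \mathcal{RT}^p(T)$ be the componentwise $\ltwo$-projection, and set $\delta := Q\sigma - P_\sigma$. Because $Q$ reproduces $[\mathcal{P}^{p-1}(T)]^d$ and $P_\sigma$ matches $\sigma$ on that space by \eqref{eq:sigmaproj}, all interior moments of $\delta$ vanish, i.e. $\delta \in \mathcal{RT}^p_\perp(T)$, and Lemma \ref{lem::helper2} yields $\lVert \delta \rVert_T \leq C h^{1/2} \lVert \delta \cdot \n \rVert_\elbnd$. On each facet $\delta \cdot \n \in \mathcal{P}^p(F)$, so I would write $\delta \cdot \n = -(\sigma - P_\sigma)\cdot\n + (\sigma - Q\sigma)\cdot\n$ and insert $\mu_h = \delta \cdot \n$ into the facet conditions of Definition \ref{def::Projection}. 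On boundary facets \eqref{eq:sigmabnd} turns the $(\sigma - P_\sigma)\cdot\n$ contribution into $\alpha (u - \Pi u, \delta\cdot\n)_{F_{\rm O}}$, while on interior facets the reformulated condition turns it into $\frac{1}{\alpha^{-1}+\beta}\big((u-\Pi u, \delta\cdot\n)_F + \frac{\alpha\beta}{2}([u-\Pi u]_+, \delta\cdot\n)_F\big)$. A Cauchy--Schwarz step together with the crude bound $1 + \frac{\alpha\beta}{2} \leq 1 + \alpha\beta$ then leaves, on each facet, the weight $\frac{1+\alpha\beta}{\alpha^{-1}+\beta}$ in front of $\ltwo(F)$-norms of $u - \Pi u$ on $T$ and on the neighbouring element, plus the term $\lVert (\sigma - Q\sigma)\cdot\n \rVert_F$.

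It then remains to convert the facet norms into volume seminorms. The scaled trace inequality combined with the standard estimates $\lVert u - \Pi u \rVert_T \leq C h^{p+1}\vert u \vert_{H^{p+1}(T)}$ and $\lVert \grad(u - \Pi u) \rVert_T \leq C h^{p}\vert u \vert_{H^{p+1}(T)}$ (and the analogues for $\sigma - Q\sigma$) gives $\lVert u - \Pi u \rVert_\elbnd \leq C h^{p+1/2}\vert u \vert_{H^{p+1}(T)}$ and $\lVert (\sigma - Q\sigma)\cdot\n \rVert_\elbnd \leq C h^{p+1/2}\vert \sigma \vert_{H^{p+1}(T)}$. Multiplying by the $h^{1/2}$ from Lemma \ref{lem::helper2} produces $\lVert \delta \rVert_T \leq C h^{p+1}\big(\vert \sigma \vert_{H^{p+1}(T)} + \frac{1+\alpha\beta}{\alpha^{-1}+\beta}\vert u \vert_{H^{p+1}(\omega_T)}\big)$ over the patch $\omega_T$ of $T$ and its facet neighbours; adding the best-approximation term $\lVert \sigma - Q\sigma \rVert_T$ and summing over all elements, where under shape regularity the neighbour contributions from $[u-\Pi u]_+$ overlap only boundedly, yields the assertion. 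I expect the genuine obstacle to be the very feature that makes the method attractive: because of the $h$-independent stabilisation, facet contributions cannot be absorbed into volume terms by an inverse estimate without losing a power of $h$, so the argument must route the boundary term through the norm equivalence of Lemma \ref{lem::helper2}, and the $\alpha,\beta$-weights of the two facet conditions must be tracked carefully so that they combine exactly into the stated factor $\frac{1+\alpha\beta}{\alpha^{-1}+\beta}$.
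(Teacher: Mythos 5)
Your argument is correct and follows essentially the same route as the paper: decompose $\sigma - P_\sigma$ through an auxiliary interpolant so that the remainder lies in $\mathcal{RT}^p_\perp(T)$, test the facet conditions of Definition \ref{def::Projection} with the normal trace of that remainder, and convert facet norms to volume seminorms before invoking Lemma \ref{lem::helper2}. The one genuine difference is your choice of comparison operator: the paper takes the canonical Raviart--Thomas interpolant, whose defining facet moments make $((\sigma - \mathcal{RT}(\sigma))\cdot\n,\mu_h)_F$ vanish for all $\mu_h\in\mathcal{P}^p(F)$, so the $\sigma$-dependence drops out of the facet conditions entirely and $\vert\sigma\vert_{H^{p+1}}$ enters only through the best-approximation term $\lVert\sigma-\mathcal{RT}(\sigma)\rVert_T$; you instead use the componentwise $\ltwo$-projection $Q$, which leaves the extra facet term $\lVert(\sigma-Q\sigma)\cdot\n\rVert_F$ that you correctly absorb via the scaled trace inequality at rate $h^{p+1/2}$, recombining with the $h^{1/2}$ of Lemma \ref{lem::helper2} to the same $h^{p+1}$. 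The paper's choice is marginally cleaner bookkeeping; yours avoids introducing the $\mathcal{RT}$-interpolant and its facet-moment property, at the cost of one additional trace estimate. Both yield the stated constant $\frac{1+\alpha\beta}{\alpha^{-1}+\beta}$ in front of $\vert u\vert_{H^{p+1}}$, and your uniqueness argument (not spelled out in the paper's proof of this lemma) is a correct bonus.
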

\begin{proof}
The proof is an adaptation of the approach in \cite[Proposition A.3]{cockburn2010projection}.
Consider the standard $\mathcal{RT}$-interpolant satisfying
\begin{subequations}
\begin{align}
	(\mathcal{RT}(\sigma), \tau_h)_T &= (\sigma, \tau_h)_T &&\forall \tau_h \in [\mathcal{P}^{p-1}(T)]^d, \\
	((\sigma - \mathcal{RT}(\sigma)) \cdot \n, \mu_h)_F &= 0  && \forall \mu_h \in \mathcal{P}^{p}(F)
\end{align}
\end{subequations}
and define $\delta_\sigma := \mathcal{RT} (\sigma) - P_\sigma(\sigma, u)$.
Due to \eqref{eq:sigmaproj} and (\ref{eq:sigmaelbnd} - \ref{eq:sigmabnd}) there holds for $\delta_\sigma$
\begin{align}
	(\delta_\sigma, \tau_h)_T &= 0 &&\forall \tau_h \in [\mathcal{P}^{p-1}(T)]^d, \label{eq::projortho}\\
	\left ( \frac{1}{\alpha} + \beta \right ) (\delta_\sigma \cdot \n, \mu_h)_{\elbnd \cap F_{\rm I}} 
		&= (u - P_u, \mu_h)_{\elbnd \cap F_{\rm I}} + \frac{\alpha \beta}{2} ([u - P_u], \mu_h)_{\elbnd \cap F_{\rm I}} && \forall \mu_h \in \mathcal{P}^{p}(F_{\rm I}), \\
	\left ( \frac{1}{\alpha} + \beta \right ) (\delta_\sigma \cdot \n, \mu_h)_{\elbnd \cap F_{\rm O}}
		&= (1 + \alpha \beta) ( u - P_u, \mu_h)_{\elbnd \cap F_{\rm O}} &&\forall \mu_h \in \mathcal{P}^{p}(F_{\rm O}).
\end{align}
The idea is to choose $\mu_h = \delta_\sigma \cdot \n$ in the equations above and to estimate facet terms by
\begin{align}
\begin{split}
	\vert (u- P_u, \delta_\sigma \cdot \n)_F \vert &\leq C h^{-\frac{1}{2}} \lVert u - P_u \rVert_F \lVert \delta_\sigma \rVert_T \\
		&\leq C h^{-1} (\lVert u - P_u \rVert_T + h \vert u - P_u \vert_{H^{1}(T)}) \lVert \delta_\sigma \rVert_T \\
		&\leq C h^{p} \vert u \vert_{H^{p+1}} \lVert \delta_\sigma \rVert_T.
\end{split}
\end{align}
The constant $C$ changes in each line, but stays independent of $h, \w, \alpha, \beta$.
Due to \eqref{eq::projortho} Lemma \ref{lem::helper2} can be applied yielding
\begin{align}
\begin{split}
	\left ( \frac{1}{\alpha} + \beta \right ) \lVert \delta_\sigma \rVert^2_T
		\leq& \left ( \frac{1}{\alpha} + \beta \right ) C h \lVert \delta_\sigma \cdot \n \rVert^2_\elbnd \\
	=& \ C h ( (u - P_u, \delta_\sigma \cdot \n)_{\elbnd \setminus \bnd} + \frac{\alpha \beta}{2} ([u - P_u], \delta_\sigma \cdot \n)_{\elbnd \setminus \bnd} \\
		&+ (1 + \alpha \beta) (u - P_u, \delta_\sigma \cdot \n)_{\elbnd \cap \bnd} ) \\
	\leq& \ C (1 + \alpha \beta) h^{p+1} \vert u \vert_{H^{p+1}} \lVert \delta_\sigma \rVert_T,
\end{split}
\end{align}
which implies
\begin{align}
	\lVert \delta_\sigma \rVert_T \leq C \frac{1 + \alpha \beta}{\alpha^{-1} + \beta} h^{p+1} \vert u \vert_{H^{p+1}}
\end{align}
and finally gives
\begin{align}
\begin{split}
	\lVert \sigma - P_\sigma (\sigma, u) \rVert_\domain &\leq \lVert \sigma - \mathcal{RT}(\sigma) \rVert_\domain + \lVert \delta_\sigma \rVert_\domain \\
	&\leq C h^{p+1} \left ( \vert \sigma \vert_{H^{p+1}} + \frac{1 + \alpha \beta}{\alpha^{-1} + \beta} \vert u \vert_{H^{p+1}} \right ).
\end{split}
\end{align}
\end{proof}

With this result, the error estimate for the flux can be finalised.

\begin{proof}[Proof of Theorem \ref{th::sigmaerror}]
According to Lemma \ref{lem::dgproject} in combination with the projection in Definition \ref{def::Projection} there holds
\begin{align}
\begin{split}
	 \w \sum_{T \in \mathcal{T}} \lVert e_\sigma^P \rVert_T^2 =& - \Im \Big ( \sum_{T \in \mathcal{T}}  j \w (\sigma - P_\sigma(\sigma, u), e_\sigma^P)_T
		- j \w \lVert e_u^P \rVert^2_T \Big ) \\
	\leq&\ \w \sum_{T \in \mathcal{T}} \vert (\sigma - P_\sigma(\sigma, u), e_\sigma^P)_T \vert + \lVert e_u^P \rVert^2_T \\
	\leq&\ \w \sum_{T \in \mathcal{T}} \lVert \sigma - P_\sigma(\sigma, u) \rVert_T \lVert e_\sigma^P \rVert_T + \lVert e_u^P \rVert^2_T.
\end{split}
\end{align}
Applying Young's inequality, Lemma \ref{lem::psigmaapproximation} for $\sigma - P_\sigma$ and Theorem \ref{th::scalarerror} for $e_u^P$ yields
\begin{align}
\begin{split}
	 \w \sum_{T \in \mathcal{T}} \lVert e_\sigma^P \rVert_T^2 
	 	&\leq \w \sum_{T \in \mathcal{T}} \lVert \sigma - P_\sigma(\sigma, u) \rVert_T^2 +  2 \lVert e_u^P \rVert^2_T \\
		&\leq \w C h^{2p+2} \left ( \lVert \sigma \rVert_{H^{p+1}}^2 + \left ( \frac{1 + \alpha \beta}{\alpha^{-1} + \beta} \right )^2 \lVert u \rVert_{H^{p+1}}^2 \right ) \\
		&+ \w C h^{2p+2} \left ( \left (\frac{2}{\alpha} + \beta \right ) \left ( 1 + \w \right )^2 + 2 \alpha (1 + \w)^2 \w^2 h^{2} \right) 
			\left ( \left (\frac{2}{\alpha} + \beta \right ) \lVert \sigma \rVert_{H^{p+1}}^2 + 2 \alpha  \lVert u \rVert_{H^{p+1}}^2 \right ).
\end{split}
\end{align}
Considering
\begin{align}
	\lVert \sigma - \sigma_h \rVert_\domain \leq \lVert \sigma - P_\sigma (\sigma, u) \rVert_\domain + \lVert e_\sigma^P \rVert_\domain
\end{align}
and applying Lemma \ref{lem::psigmaapproximation}
concludes the proof.
\end{proof}

\section{Numerical Experiments} \label{sec::numres}

The mixed Helmholtz problem \eqref{eq::weakform} has been discretised with the open source FE software NETGEN/NGSolve \cite{jsnetgen, jsngsolve}.
It provides the required high-order FE spaces as well as the tools for the iterative solver.
First, the established error estimates are probed by simulations on convex domains with plane waves as analytical solutions.

\subsection{2D Plane Wave}

A 2D plane wave example has been considered on the square domain $\domain = (0,1)^2$ discretised by a structured mesh comprised of triangles.
For fixed $N \in \N_{+}$ the domain has been split into $N \times N$ identical squares and each smaller square was split into two triangles by a line from the top left to the bottom right corner.
In Figure \ref{fig:2dstructuredmesh} the mesh for $N=4$ is shown.
\begin{figure}
        	\centering
	\begin{subfigure}[b]{0.4\textwidth}
        		\centering
		\includegraphics[width=0.60\textwidth]{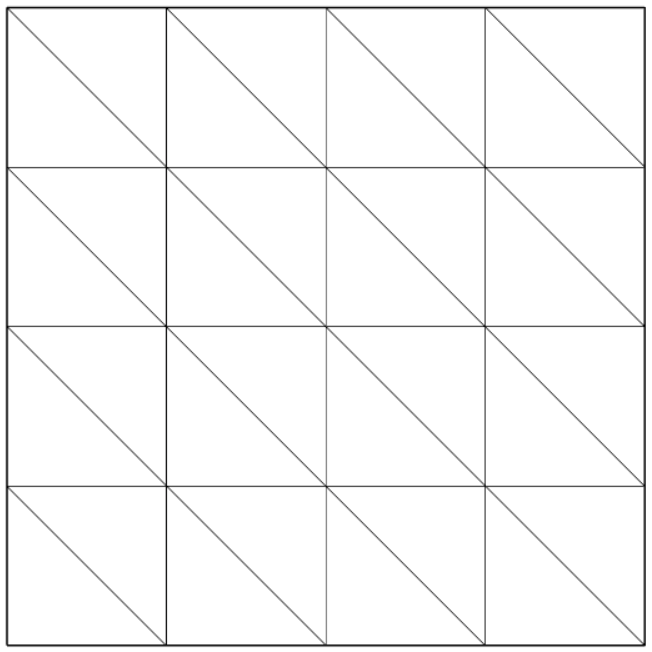}
       		\caption{The 2D structured mesh of the unit square is shown for the choice of $N=4$.}
      		\label{fig:2dstructuredmesh}
    	\end{subfigure}
	\hspace{1cm}
	\begin{subfigure}[b]{0.4\textwidth}
        		\centering
		\includegraphics[width=0.6\textwidth]{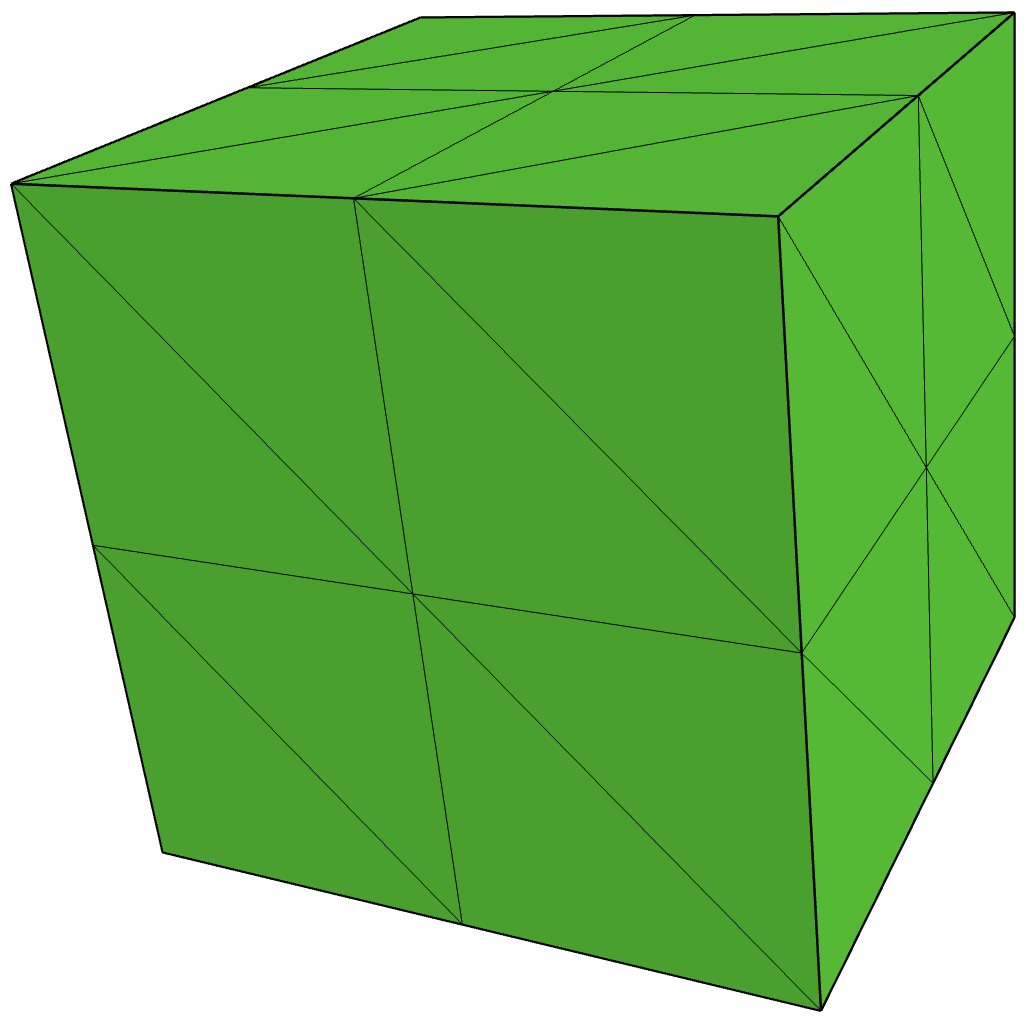}
       		\caption{The 3D structured mesh of the unit cube is shown for the choice of $N=2$.}
      		\label{fig:3dstructuredmesh}
    	\end{subfigure}
       	\caption{The structured meshes for the plane wave simulations in 2D and 3D are visualised.}
\end{figure}
The excitation $g$ in the HDG formulation \eqref{eq::weakform} has been chosen so that the analytical solution is the plane wave
\begin{align}
	u(x,y) = e^{j \kappa (x \cos \theta + y \sin \theta)}
\end{align}
with an angle of $\theta = \pi / 6$. The wave number was fixed at $\kappa = 5$ and structured meshes corresponding to $N = \{2,4,8,16,32,64,128\}$ have been considered.
In Figure \ref{fig:2dconv} the convergence rates for the polynomial degrees $p = \{0,1,2,3\}$ with respect to the mesh size are visualised.
\begin{figure}
	\centering
	\begin{subfigure}[b]{0.49\textwidth}
        		\centering
		\includegraphics[width=\textwidth]{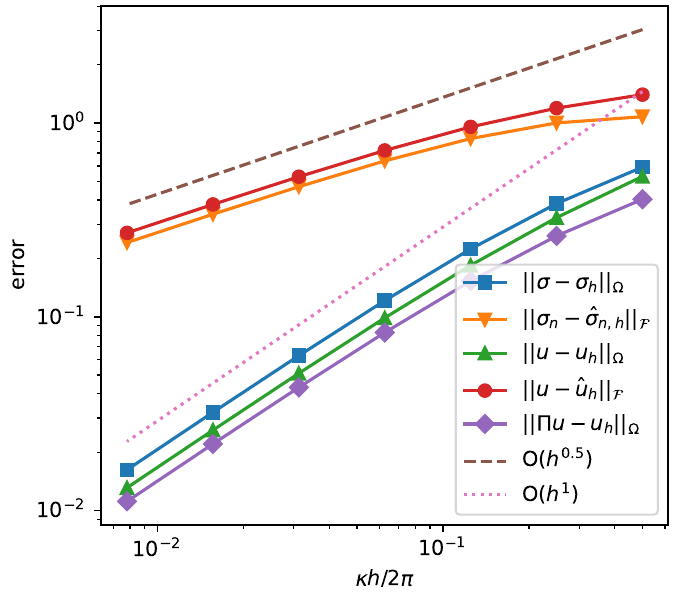}
       		\caption{$p=0$}
    	\end{subfigure}
	\begin{subfigure}[b]{0.49\textwidth}
        		\centering
		\includegraphics[width=\textwidth]{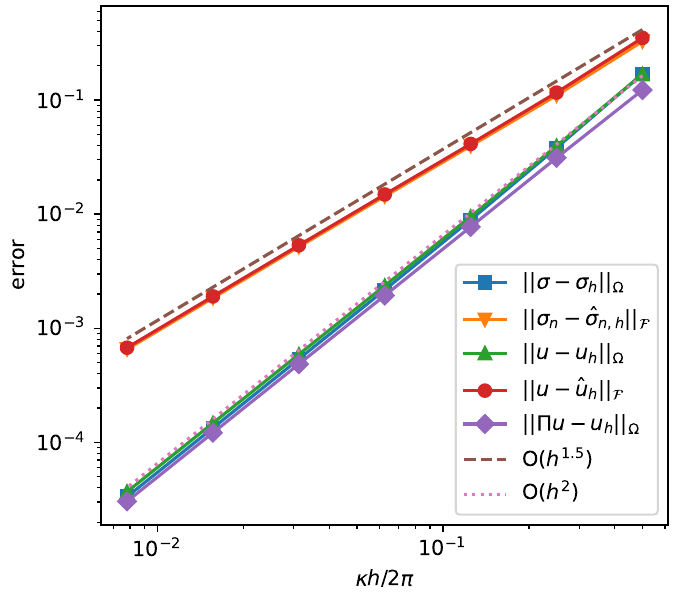}
       		\caption{$p=1$}
    	\end{subfigure}
	\begin{subfigure}[b]{0.49\textwidth}
        		\centering
		\includegraphics[width=\textwidth]{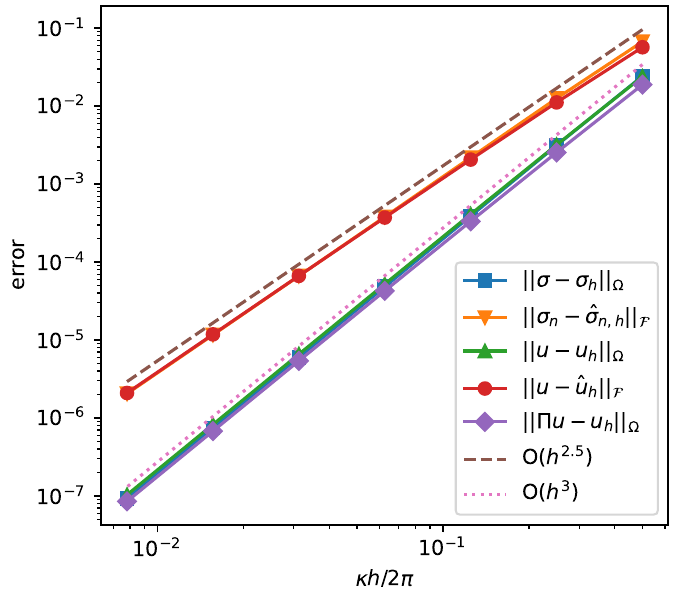}
       		\caption{$p=2$}
    	\end{subfigure}
	\begin{subfigure}[b]{0.49\textwidth}
        		\centering
		\includegraphics[width=\textwidth]{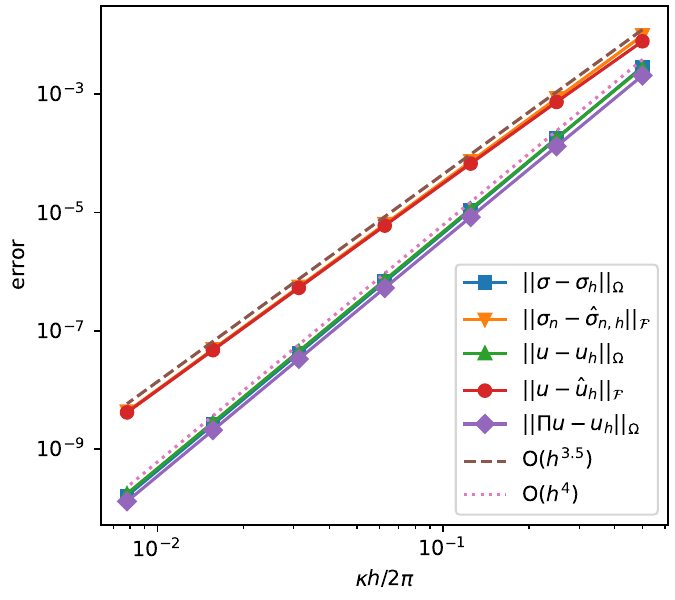}
       		\caption{$p=3$}
    	\end{subfigure}
	\caption{The convergence of the 2D plane wave example on a sequence of structured meshes for various polynomial degrees is visualised. 
		A convergence rate of $h^{p+1}$ can be seen for pressure, flux and the projected error $\Pi u - u_h$.
		The facet variables $\hat \sigma_{\n,h}$ and $\hat u_h$ converge with a rate of $h^{p+1/2}$.}
	\label{fig:2dconv}
\end{figure}
In alignment with the established theory in Section \ref{sec::errorestimatu} the projected error $e_u = \Pi u - u_h$ converges at a rate of $h^{p+1}$.

\subsection{3D Plane Wave}

Similarly to the previous 2D example, a 3D plane wave simulation has been considered on the cubic domain $\domain = (0,1)^3$ discretised with a structured mesh comprised of tetrahedra.
For fixed $N \in \N_{+}$ the domain has been split into $N \times N \times N$ identical cubes and each smaller cube was split into six tetrahedra.
In Figure \ref{fig:3dstructuredmesh} the mesh for $N=2$ is shown.
The excitation $g$ in the HDG formulation \eqref{eq::weakform} has been chosen so that the analytical solution is the plane wave
\begin{align}
	u(x,y,z) = e^{j \kappa \left (x \cos \theta + (y \cos \eta + z \sin \eta) \sin\theta \right)}
\end{align}
with the angles $\theta = \pi / 6$ and $\eta = \pi / 5$. The wave number was fixed at $\kappa = 3$ and a sequence of structured meshes corresponding to $N = \{2, 6, 10, 14, 18, 22, 26\}$ has been considered.
In Figure \ref{fig:2dconv} the convergence rates for the polynomial degrees $p = \{0,1\}$ with respect to the mesh size are visualised.
\begin{figure}
	\centering
	\begin{subfigure}[b]{0.49\textwidth}
        		\centering
		\includegraphics[width=\textwidth]{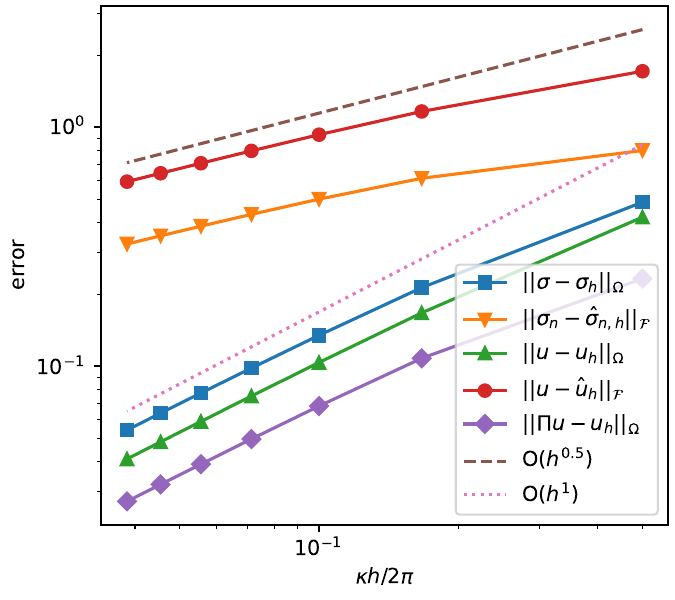}
       		\caption{$p=0$}
    	\end{subfigure}
	\begin{subfigure}[b]{0.49\textwidth}
        		\centering
		\includegraphics[width=\textwidth]{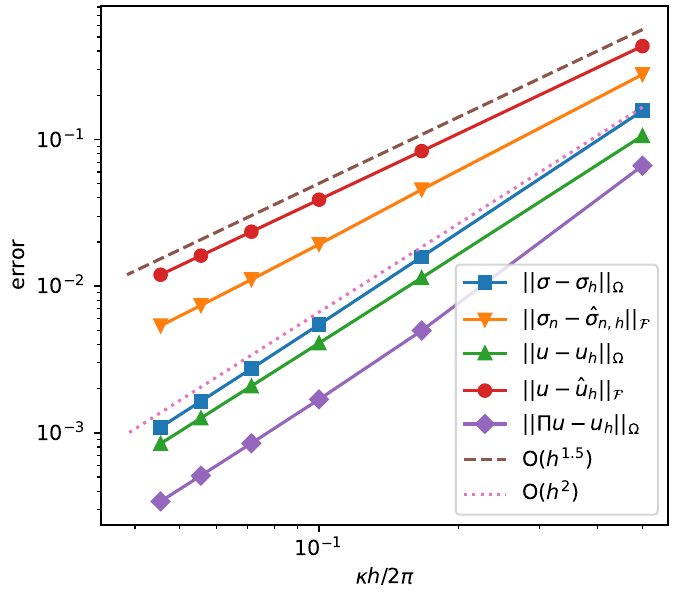}
       		\caption{$p=1$}
    	\end{subfigure}
	\caption{The convergence of the 3D plane wave example on a sequence of structured meshes for polynomial degrees zero and one is visualised. 
		A convergence rate of $h^{p+1}$ can be seen for pressure, flux and the projected error $\Pi u - u_h$.
		The facet variables $\hat \sigma_{\n,h}$ and $\hat u_h$ converge with a rate of $h^{p+1/2}$.}
	\label{fig:2dconv}
\end{figure}

For the following larger simulations, static condensation was applied eliminating the volume variables $\sigma_h$ and $u_h$.
A system of linear equations for the facet variables $\hat \sigma_{\n,h}$ and $\hat u_h$ remained to be solved and the solution was afterwards extended back to the interior of elements.
As an iterative solver a BiCGSTAB with a block Gauss-Seidel preconditioner and a tolerance of $\varepsilon = 10^{-5}$ was applied. In 2D simulations, the facet variables in a vertex patch were combined into blocks
and for 3D simulations, facet-wise blocks were generated.

\subsection{Heterogeneous Materials}

The analysis in this work only covers the case of constant material parameters, but the method can also cope with heterogeneous materials.
In this numerical experiment, the following heterogeneous Helmholtz problem has been considered.

\begin{definition}[Mixed Heterogeneous Helmholtz Problem]
For given $\w > 0$ and $g \in \ltwo(\bnd)$, let $(\sigma, u)$ be the solution of the boundary value problem (BVP)
\begin{subequations}
\begin{align}
	j \w \sigma - \grad u = 0 && \text{ in } \domain, \\
	- \diver \sigma + j \w c u = 0 && \text{ in } \domain, \\
	\sigma \cdot \n + u = g && \text{ on } \bnd,
\end{align}
where $c(\x) > 0$ is a given positive, bounded, varying material coefficient.
\end{subequations}
\end{definition}

The following 2D-examples have similar geometry and material coefficients as \cite[Experiment 6.5]{euanheterogeneous}.
The domain consists of the square $\domain = (-1,1)^2$ with a circle of radius $r = 1 / 2$ as a penetrable obstacle centred in the middle.
The wavenumber has been chosen as $\w = 100$ and the excitation was
\begin{align}
	g(\x) = -10j \w e^{-20 \left ( y + \frac{1}{10} \right )^2}
\end{align}
on the left boundary. On the other outer boundaries, homogenous Robin boundary conditions were applied.
The excitation is slightly offset from the axis of symmetry and represents an inflowing Gaussian peak.
For the discrete FE spaces, a polynomial degree of $p=4$ has been used and the maximal mesh size was chosen as $h= {2 \pi} / {8 \w}$.
Two different material profiles were considered for the simulations, specifically
\begin{align}
	c_1(\x) :=
	\begin{cases}
		2 \sqrt{x^2 + y^2}c_{min} + \left (1 - 2 \sqrt{x^2 + y^2} \right) c_{max}, &\lVert \x \rVert_2 < \frac{1}{2}, \\
		1, &\text{otherwise},
	\end{cases}
\end{align}
and 
\begin{align}
	c_2(\x) :=
	\begin{cases}
		\left (1 - 2 \sqrt{x^2 + y^2} \right) c_{min} + 2 \sqrt{x^2 + y^2}c_{max}, &\lVert \x \rVert_2 < \frac{1}{2}, \\
		1, &\text{otherwise},
	\end{cases}
\end{align}
with the minimum $c_{min} = 0.02$ and the maximum $c_{max} = 50$.
The first material $c_1$ is constant outside the circle and decreases linearly with respect to the radius.
Contrary, the second material $c_2$ increases linearly.
In Figure \ref{fig:matc1} and \ref{fig:matc2} the real part of the pressure can be seen for the simulations with material coefficients $c_1$ and $c_2$.
\begin{figure}
	\centering
	\begin{subfigure}[b]{0.4\textwidth}
        		\centering
		\includegraphics[width=\textwidth]{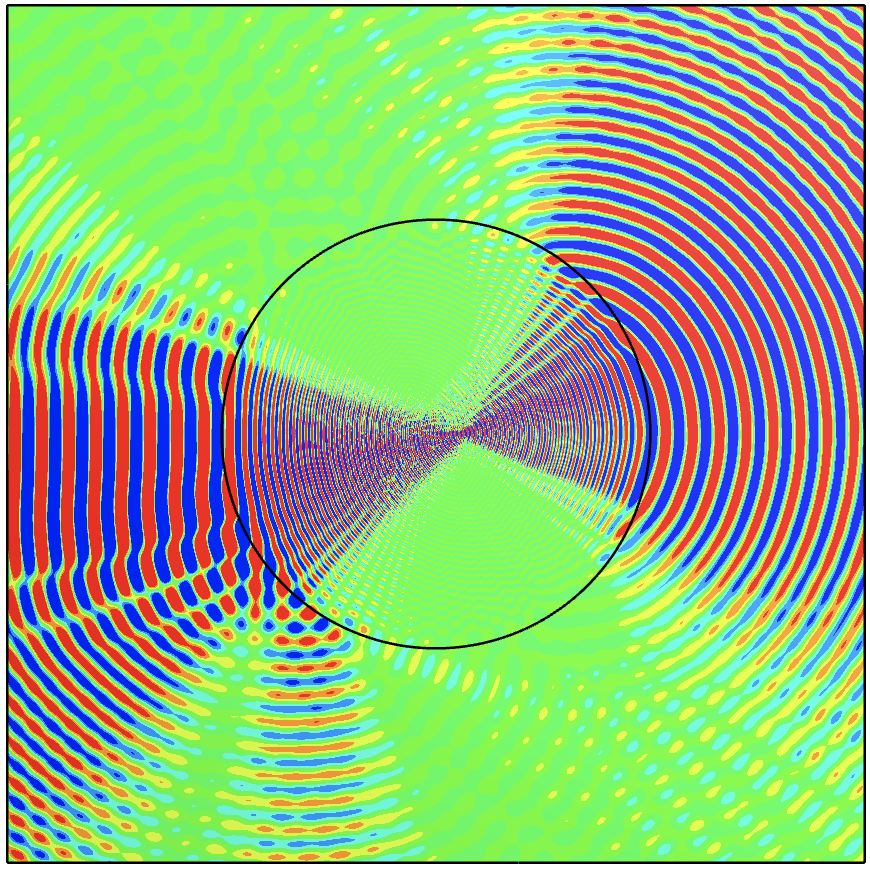}
       		\caption{Result for linearly decreasing material $c_1$ with respect to the radius inside the circle}
      		\label{fig:matc1}
    	\end{subfigure}
	\hspace{2cm}
	\begin{subfigure}[b]{0.4\textwidth}
        		\centering
		\includegraphics[width=\textwidth]{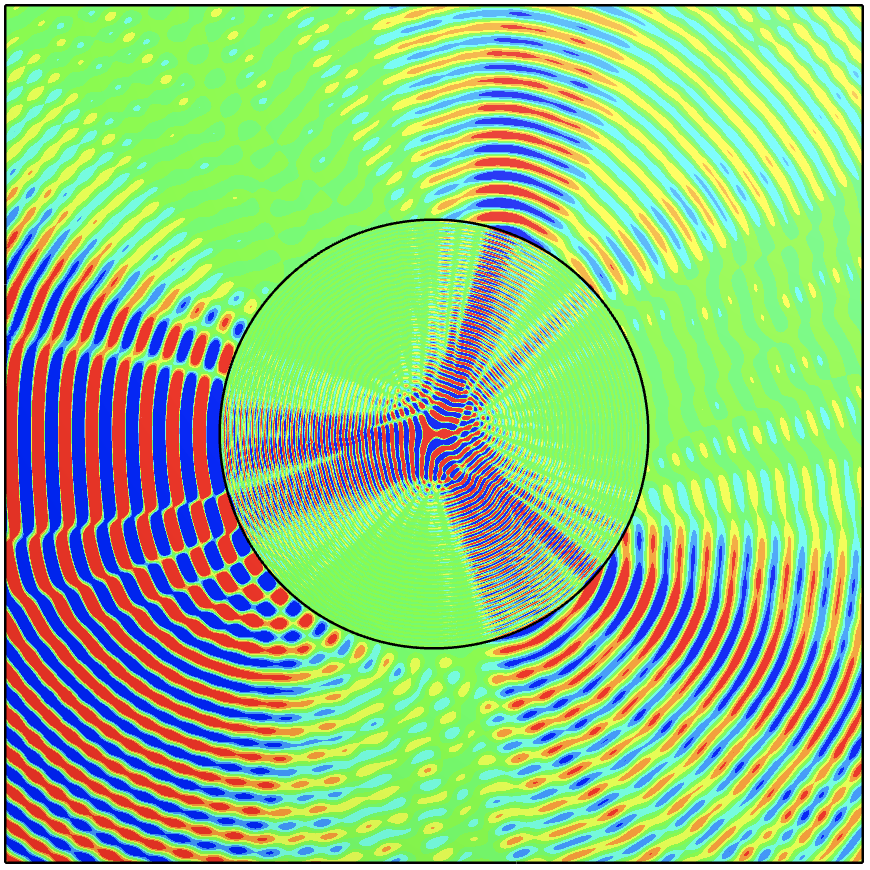}
       		\caption{Result for linearly increasing material $c_2$ with respect to the radius inside the circle}
      		\label{fig:matc2}
    	\end{subfigure}
	\caption{The real part $\Re ( u_h )$ off the pressure is shown for simulations with the heterogeneous material coefficients $c_1$ and $c_2$. A Gaussian peak has been applied on the left boundary as an excitation.}
\end{figure}
Both simulations have been carried out on $8$ cores, required approximately $14 \ \rm{GB}$ of memory 
and the number of degrees of freedom (NDoF) can be found in Table \ref{table:ndofs}. 
After static condensation a system of linear equations with the combined size of $\hat \sigma_{\n,h}$ and $\hat u_h$ was solved.
In Table \ref{table:comptimes} the iteration counts and the computation times for both simulations are shown.
The wall time reflects the duration of the iterative solve and the processor time is the sum of the computation times of all cores combined.

\begin{table}
\centering
\caption{NDoF for the 2D simulation with heterogeneous materials}
\begin{tabular}{|c| c c c c|}
	\hline
	& $\sigma_h$ & $\hat \sigma_{\n,h}$ & $u_h$ & $\hat u_h$ \\
	\hline
	NDoFs & 4 279 920 & 1 072 530 & 2 139 960 & 1 072 530 \\
	\hline
\end{tabular}
\label{table:ndofs}
\end{table}

\begin{table}
\centering
\caption{Number of iterations and computation times for heterogeneous materials}
\begin{tabular}{|c| c c c c|}
	\hline
	& iterations & wall time in \si{\s} & processor time in \si{\s} & processor time per core in \si{\s} \\
	\hline
	$c_1$ & 5 833 & 2 363 & 15 712 & 1 964 \\
	$c_2$ & 4 024 & 1 634 & 10 867 & 1 359 \\
	\hline
\end{tabular}
\label{table:comptimes}
\end{table}

\subsection{Scattering on Spheres}

A 3D example with $100$ spheres as scatterers treated as homogeneous Dirichlet boundaries has been considered. The domain is comprised of the cube $\domain = (0,1)^3$ 
and on the plane of symmetry perpendicular to the $x$-axis an array of the scatterers is positioned.
They are aligned on a $10 \times 10$ grid with an equidistant spacing of $\delta = 1/11$ in between midpoints. The radius $r = \delta/3$ of each sphere is identical.
A cut view of the geometry and the solution can be seen in Figure \ref{fig:3ddomain}.
\begin{figure}
	\centering
	\begin{subfigure}[b]{0.4\textwidth}
        		\centering
		\includegraphics[width=\textwidth]{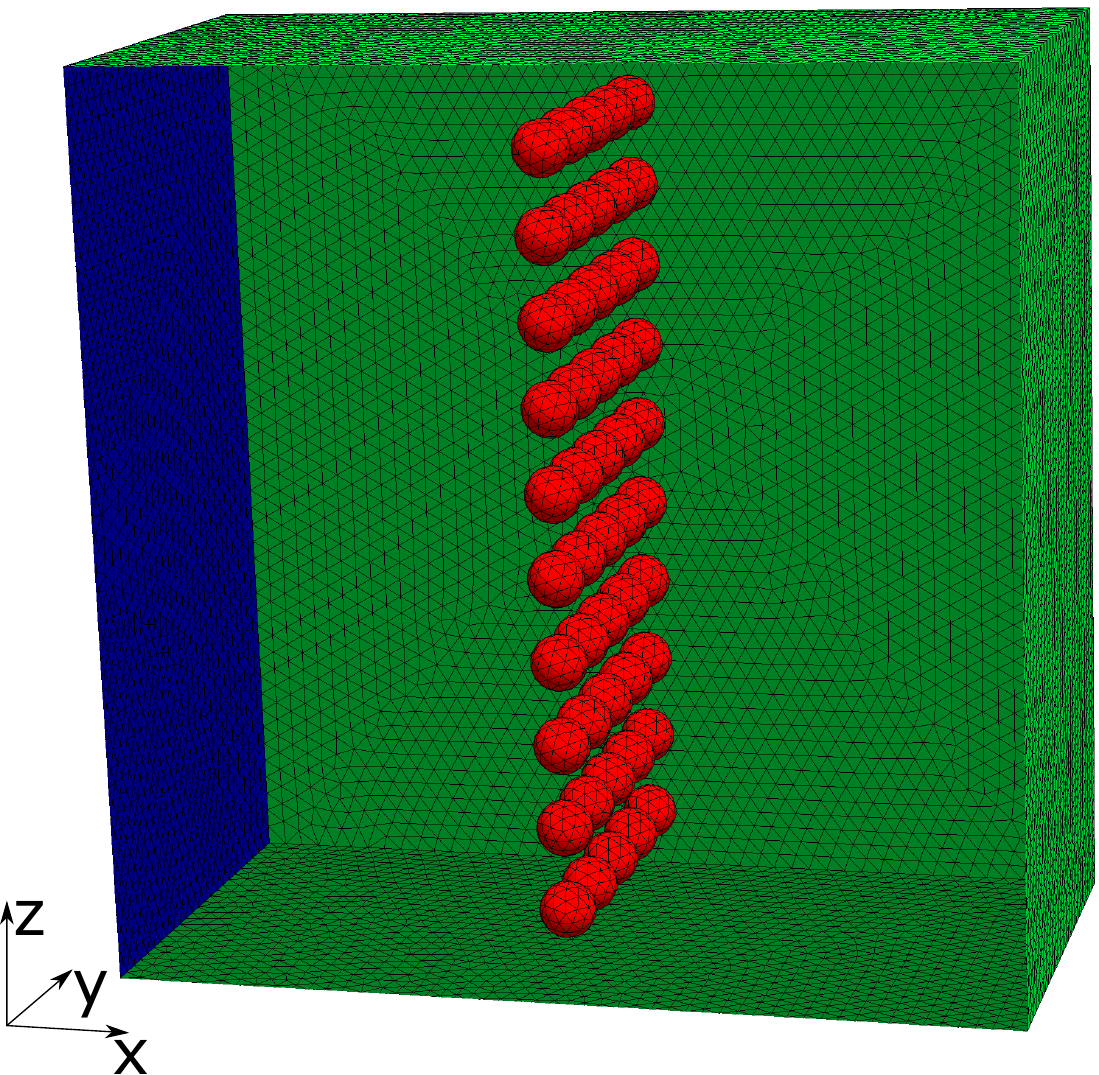}
       		\caption{The spherical scatterers can be seen in red and the left excitation boundary is in blue.}
    	\end{subfigure}
	\hspace{2cm}
	\begin{subfigure}[b]{0.4\textwidth}
        		\centering
		\includegraphics[width=\textwidth]{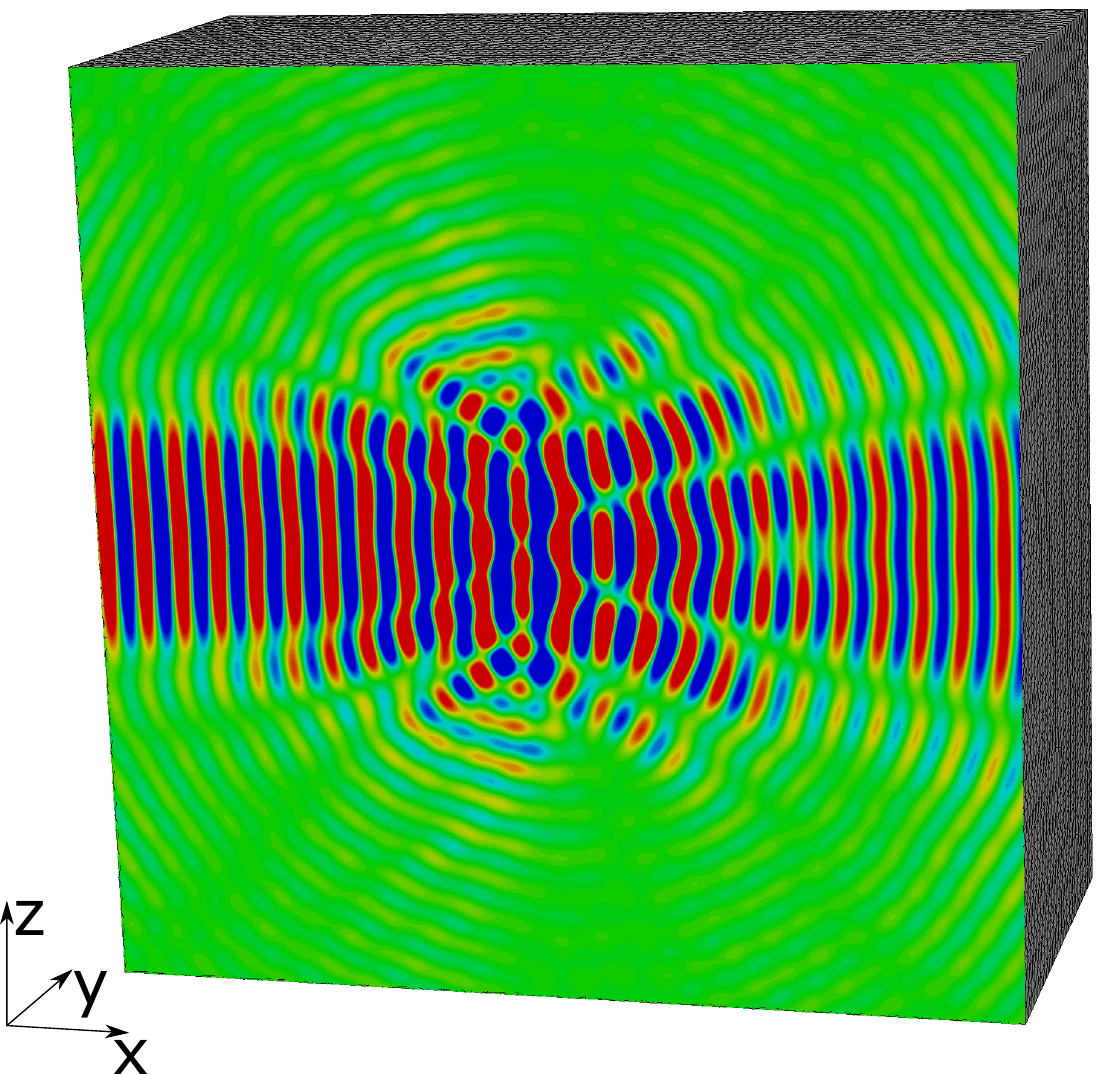}
       		\caption{The real part $\Re ( u_h )$ of the pressure is drawn on the plane of symmetry.}
    	\end{subfigure}
	\caption{A cut view along the $x,z$-plane of symmetry is shown for the 3D-example with spherical scatterers.}
      	\label{fig:3ddomain}
\end{figure}
A wavenumber of $\w = 150$ was considered and the material coefficient was constant $c=1$.
The excitation was the Gaussian peak
\begin{align}
	g(\x) = -10j \w e^{-100 \left ( \left ( y - \frac{1}{2} \right )^2 + \left ( z - \frac{1}{2} \right )^2 \right )}
\end{align}
on the left boundary.
For the discrete space, a polynomial degree of $p=4$ has been used and the maximal mesh size was chosen as $h = 2 \pi /2 \w$.
The simulation has been carried out on $16$ cores, required approximately $153 \ \rm{GB}$ of memory 
and the NDoF can be seen in Table \ref{table:ndofs3d}.
\begin{table}
\centering
\caption{NDoF for the 3D scattering experiment}
\begin{tabular}{|c| c c c c|}
	\hline
	& $\sigma_h$ & $\hat \sigma_{\n,h}$ & $u_h$ & $\hat u_h$ \\
	\hline
	NDoFs & 56 994 735 & 16 616 295 & 18 998 245 & 16 616 295 \\
	\hline
\end{tabular}
\label{table:ndofs3d}
\end{table}
The solver stopped after 417 iterations with a wall time of $8204$ \si{\s}.


\section*{Acknowledgment}

The authors are grateful to Jens Markus Melenk for the informative discussions on the topic of the paper,
especially, for pointing out the wave number dependency of the adjoint Helmholtz problem.
This work was supported by the "University SAL Labs" initiative of Silicon Austria Labs (SAL) and its Austrian partner universities for applied fundamental research for electronic-based systems
and the authors acknowledge the support of Bernhard Auinger of SAL.

\bibliographystyle{alpha}
\bibliography{library.bib}

\end{document}